\newcommand{\simpAr}[2][r]{%
\ar@{}[#1]|-*[@]_{#2}%
}
\renewcommand{\colorlinks}{true}
\renewcommand{\linkcolor}{lblue}
\renewcommand{\citecolor}{lblue}
\renewcommand{\urlcolor}{dblue}
\renewcommand{\linkbordercolor}{red}
\renewcommand{\citebordercolor}{green}
\renewcommand{\urlbordercolor}{cyan}
\definecolor{vert}{rgb}{0,0.6,0.2}
\def\WF{\mathrm{WF}}
\newcommand{\Char}{\mathrm{SS}}
\newcommand{\scD}{\mathscr{D}}
\renewcommand{\bfX}{\mathbf{X}}
\long\def\change#1{
 #1}
\newcommand{\Bf}{\overline{B}}
\newcommand{\CN}{\text{CN}}
\newcommand{\norm}[1]{\left|\!\left|#1\right|\!\right|}
\newcommand{\abs}[1]{\left|#1\right|}
\newcommand{\Cexp}{\cC^{\mathrm{exp}}}
\newcommand{\cCexp}{\cC^{\mathrm{exp}}}
\def\Supp{\operatorname{Supp}}
\def\11{{\mathbf 1}}
\def\AA{{\mathbb A}}
\def\CC{{\mathbb C}}
\def\NN{{\mathbb N}}
\def\QQ{{\mathbb Q}}
\def\RR{{\mathbb R}}
\def\ZZ{{\mathbb Z}}
\def\cC{{\mathscr C}}
\def\cD{{\mathcal D}}
\def\cF{{\mathcal F}}
\def\cG{{\mathcal G}}
\def\cI{{\mathcal I}}
\def\cO{{\mathcal O}}
\def\cS{{\mathcal S}}
\def\cU{{\mathcal U}}
\def\cW{{\mathcal W}}
\theoremstyle{plain}
\numberwithin{equation}{subsection}
\newcommand{\Jac}{\operatorname{Jac}}
\newcommand\blue[1]{{{#1}}}
\newcommand{\zerodel}{.\kern-\nulldelimiterspace}
\newcommand{\la}{\left \langle}
\newcommand{\ra}{\right \rangle}
\newcommand{\lp}{\left (}
\newcommand{\rp}{\right )}
\begin{document}

\author[Aizenbud]
{Avraham Aizenbud}
\address{Faculty of Mathematical Sciences,
Weizmann Institute of Science,
Rehovot, Israel}
\email{aizenr@gmail.com}
\urladdr{http://aizenbud.org}

\author[Cluckers]
{Raf Cluckers}
\address{Univ.~Lille, CNRS,  UMR 8524 - Laboratoire Paul Painlev\'e, F-59000 Lille, France, and
KU Leuven, Department of Mathematics, B-3001 Leu\-ven, Bel\-gium}
\email{Raf.Cluckers@univ-lille.fr}
\urladdr{http://rcluckers.perso.math.cnrs.fr/}


\author[Raibaut]{Michel Raibaut}
\address{Laboratoire de Math\'ematiques\\
Universit\'e Savoie Mont Blanc, B\^atiment Chablais, Campus Scientifique, Le Bourget du Lac, 73376 Cedex, France}
\email{Michel.Raibaut@univ-smb.fr}
\urladdr{www.lama.univ-savoie.fr/~raibaut/}

\author[Servi]{Tamara Servi}
\address{Institut de Math\'ematiques de Jussieu - Paris Rive Gauche
Universit\'{e} Paris Cit\'{e} and Sorbonne Universit\'{e}, CNRS, IMJ-PRG, F-75013 Paris, France}
\email{Tamara.Servi@imj-prg.fr}
\urladdr{https://tamaraservi.github.io/}


\subjclass[2020]{Primary 46F99, 32S40, 32S60, 35Nxx, 03C10 Secondary 22E45, 26B15, 14F10, 32C38,  58J10.} 

\keywords{Distributions of C$^{{\mathrm{exp}}}$-class, embedded resolution of subanalytic functions, analytically holonomic D-modules, Fourier transforms of tempered distributions, micro-local analysis, wave front sets, local structure theorem for distributions}

\thanks{The authors would like to thank Daniel Miller, Claude Sabbah and Mathias Stout for several interesting discussions related to the paper and the referee for very valuable input, in particular on Proposition \ref{cor:base-case-u}. The author R.~C.~was partially supported by KU Leuven IF C16/23/010 and acknowledges the support of the CDP C2EMPI, as well as the French State under the France-2030 programme, the University of Lille, the Initiative of Excellence of the University of Lille, the European Metropolis of Lille for their funding and support of the R-CDP-24-004-C2EMPI project.}

%
%
%
%
%
%
%
%


\title[Holonomicity of C$^{{\mathrm{exp}}}$-class distributions]{Analytic holonomicity of real C$^{{\mathrm{exp}}}$-class distributions}
\begin{abstract}
	We introduce a notion of distributions on $\RR^n$, called distributions of $\cCexp$-class, based on wavelet transforms of distributions and the theory from \cite{CCMRS} about $\cCexp$-class functions. We prove that the framework of $\cCexp$-class distributions is closed under natural operations, like push-forward, pull-back, derivation and \blue{anti-derivation}, and, in the tempered case, Fourier transforms. Our main result is the (real analytic) holonomicity of all distributions of $\cCexp$-class.
\end{abstract}

\maketitle

\section{Introduction}


In this paper, we introduce a notion of distributions on an open set $U\subset \RR^n$, called distributions of $\cCexp$-class, see Definition \ref{def:distrCexp}, and we prove their holonomicity, see Theorem \ref{mainthm}.

The definition of $\cCexp$-class distributions is based on both continuous wavelet transforms of distributions, 
and, the theory from \cite{CCMRS} about $\cCexp$-class functions. More precisely, the continuous wavelet transform (with respect to a well-chosen mother wavelet) of a distribution $u$ is a function on $U$ which is required to be a function of $\cCexp$-class in order for $u$ to be a distribution of $\cCexp$-class. The mother wavelet is taken to be a $C^\infty$-function of $\cCexp$-class and with compact support, see Definitions \ref{notation:motherwavelet} and \ref{defn:wevelet}. The functions of $\cCexp$-class are introduced in \cite{CCMRS}; they form a tame class of functions which
contains all subanalytic functions (namely, from the o-minimal structure $\RR_{\rm an}$) and have the property that parametric integrals and parametric Fourier transforms are still of $\cCexp$-class, see Definition \ref{def:Cexp} and Remark \ref{rem:Cexp}.

In order to show our main theorem, we need to develop the basic theory of distributions of $\cCexp$-class.
We prove that the framework of $\cCexp$-class distributions is closed under natural operations, like push-forwards along proper subanalytic maps, pull-backs along suitable subanalytic maps, derivation, and \blue{anti-derivation}, see Sections \ref{sec:uCexp} and \ref{sec:antider}. 
If a distribution on $\RR^n$ is of $\cCexp$-class and furthermore tempered, then also its Fourier transform is of $\cCexp$-class.
These basic results, together with the theory of \cite{CCMRS} and a new variant of resolution of singularities (or rather, an alteration result) for subanalytic functions (see Theorem \ref{thm:res:def}), allow us to prove the following result, our main result.
The notion of holonomicity that we use in this paper is the real analytic holonomicity from Definition \ref{def:holonomic}.

\begin{introtheorem}[Proof in Section \ref{sec:holon}] \label{mainthm}
	Any $\cCexp$-class distribution $u$ on an open set $U\subset \RR^n$ is holonomic.
\end{introtheorem}

Roughly speaking, holonomicity of $u$ means it satisfies a large enough system of linear PDEs with real analytic coefficients. This notion of holonomicity (from Definition \ref{def:holonomic}) is more general than algebraic holonomicity, and stands for real analytic holonomicity which is based on D-modules with real analytic functions (see \cite[page 298]{Bjork} and Definition \ref{def:holonomic}).  Note that this real-analytic version of holonomicity is not automatically preserved under Fourier transform of tempered distributions, see Example \ref{ex}.

The work in this paper constitutes the real counterpart of the non-archimedean results of \cite{CHLR} and \cite{AizC}, where a rich class of distributions on $\QQ_p^n$ was introduced and their properties studied, which in particular addressed a question from \cite{AizDr} about wave front holonomicity.

Our framework of $\Cexp$-class distributions is stable under several of the usual operations on distributions and also under taking Fourier transforms (in the tempered case). This is similar to the situation of algebraic holonomic D-modules, which are known to be stable under Fourier transforms, see Example 2 and Corollary 3.4 of \cite{Bernstein:D-mod}.


 \medskip
We also obtain a local structure theorem of $\cCexp$-class distributions which is a key ingredient to prove Theorem \ref{mainthm}. 

\begin{introtheorem}[Proof in Section \ref{sec:antider}]
\label{thm:derivative}
Let $u$ be a $\cCexp$-class distribution on an open set $U \subset \RR^n$. Suppose that $u$ has compact support. Then there \blue{exist} $k \in \NN^n$ and a $\cCexp$-class continuous function $g$ on $U$ such that the iterated partial derivative $\frac{\partial^k}{\partial x^k} g$ equals $u$. Here, we use multi-index notation, and the partial derivatives are taken in the sense of distributions.
\end{introtheorem}

The following is a corollary of Theorem 
\ref{thm:derivative} and some properties of $\cCexp$-class functions.

\begin{introcorollary}[Proof in Section \ref{sec:antider}]\label{thm:gen.sm}
Let $u$ be a distribution of $\cCexp$-class on an open set $U\subset \R^n$. Then $u$ is analytic on a dense open subset $U'$ of $U$.
If furthermore the support of $u$ is compact, then we can take $U'$ to be subanalytic.
\end{introcorollary}


\subsection{Motivation and context}
In the non-archimedean case, the notion of $\cCexp$-class distributions, introduced in \cite{CHLR} and studied further in \cite{AizC}, is a natural framework to analyze distributions.
It is closed under all natural operations, it contains many useful examples of distributions,
and it allows a uniform treatment for all non-archimedean local fields. In \cite{AizC}, still in the non-archimedean case, it is shown that each $\cCexp$-class distribution is wave-front holonomic, a notion introduced in \cite{AizDr} to encompass the lack of a deeper holonomicity notion in the non-archimedean case.
In the archimedean case, the notions of (analytic) holonomicity are based on PDE's, which are not applicable in the non-archimedean case.

Our newly introduced  $\cCexp$-class distributions in the archimedean case are very similar to the one of \cite{CHLR}, and have similar properties, like being stable under Fourier transform and appropriate pull-backs and push-forwards. They are introduced in each setting via wavelet transforms.
Theorem \ref{mainthm} allows to propagate all the good properties of holonomic distributions to $\cCexp$-class distributions.

\subsection{Sketch of the proof and difference with the non-archimedean case}
The definition of the $\cCexp$-class of distributions as well as the proofs of its stability under natural operations are somewhat similar to those in the non-archimedean case studied in \cite{CHLR}.
The main new challenge is to prove Theorem \ref{mainthm}.
The arguments from the non-archimedean case are not directly applicable in the archimedean case for the following reasons. We will give precise notation and definitions later below.

\begin{itemize}
  \item Distributions supported on a submanifold are not the same as distributions defined on this manifold.
  \item A  $\cCexp$-class function involves not only subanalytic functions and their exponentials, but also some new transcendental 
  functions, namely, some basic integrals of subanalytic functions and their exponentials, see Definition \ref{def: constructible}.
\end{itemize}
As a new ingredient, we use anti-derivatives in order to reduce to the case of locally $L^1$ functions of $\cCexp$-class.
In more detail:
\begin{enumerate}
	\item For any $i \in \{1,\dots,n\}$, we define an anti-derivative operator $A_i$ on the space of distributions on $\R^n$
		$$A_{i}:\mathscr{D}'(\R^n)\to \mathscr{D}'(\R^n)$$
		satisfying
	  $$\frac{\partial}{\partial x_i} A_{i}(u)= u.$$
	  We do it in the following way (for simplicity here we describe the $1$-dimensional case): fix a $\cCexp$-class test function $\rho$ of integral $1$ and define for any test function $f$ on $\RR^n$ (namely, a $C^\infty$ function with compact support) 
	  $$\la A(u),f \ra = \la u,B\left(f-\rho\int_{-\infty}^{\blue{+\infty}} f\right)\ra, $$
	  where $B(g)$ for a test function $g$ is the function on $\R$ given by
	  $$B(g)(x)=\blue{-}\int_{-\infty}^x g(y)\mathrm{d}y,$$
this is treated in Section \ref{sec:antider}.  

  \item We prove that $A_i$ preserves the $\cCexp$-class of distributions (see Proposition \ref{prop:anti-Cexp}).


  \item We prove that for any $u\in \mathscr{D}'(\R^n)$, if $u$ has compact support, then there is a suitable finite iteration of anti-derivatives of $u$ which belongs to $C^0(\R^n)$, where $C^0(\R^n)$ stands for the continuous functions on $\R^n$.
  \item We prove the equality
	  $$C^0(\R^n)\cap \mathscr{D}'_{\cCexp}(\R^n) = C^0_{\cCexp}(\R^n) := C^0(\R^n)\cap \cCexp(\R^n),$$
where $\mathscr{D}'_{\cCexp}(\R^n)$ stands for the $\cCexp$-class distributions on $\R^n$ and $\cCexp(\R^n)$ for the $\cCexp$-class functions on $\R^n$  (see Definition \ref{def:distrCexp} and Proposition \ref{prop:Cexp.and.cont.is.CexpFun}).
	  We do this, using a theorem of \cite{CCMRS} that states that a limit of a $\cCexp$-class family of
	  $\cCexp$-class functions, if it exists, is of $\cCexp$-class (see Proposition \ref{prop:limits}).
  \item It is now enough to prove \blue{Theorem \ref{mainthm}} for $\cCexp$-class locally $L^1$-functions, which is done in Section \ref{sec:holon}.

  \item This case of $\cCexp$-class locally $L^1$-functions is slightly intricate and is based on a fine embedded resolution, or rather, alteration result for subanalytic functions (see Theorem \ref{thm:res:def}). Indeed, by the presence of the new transcendental $\gamma$ functions, $\cCexp$-class functions in fact come from functions on a higher dimensional space, and a large part of the work serves to reduce to an $L^1$ situation on this higher dimensional space, all the while controlling holonomicity (see e.g. Propositions \ref{prop:Cexp:int} \blue{and} \ref{lem:u:int}).

\end{enumerate}


In Section \ref{sec:gen} we indicate a further generalization, namely when enriching the class of $\cCexp$-functions by furthermore allowing complex powers. 

Let us give an example to show that the real-analytic notion of holonomicity is not automatically preserved under Fourier transform of tempered distributions, whereas tempered $\cCexp$-class distributions are stable under Fourier transform and holonomic at the same time by Proposition \ref{prop:Four} and Theorem \ref{mainthm}, which addresses a question raised in \cite{AizC}.
\begin{example}\label{ex}
For $a \in \R$, let $\delta_a\in  \mathscr{D}'(\R)$ be the Dirac delta distribution at the point $a$. Let $\phi:\N\to \Q\cap [0,1]$ be a bijection.
Let $u=\sum_{i=1}^\infty \delta_{\phi(i)}2^{-i}$. Let $\eta$ be the Fourier transform of $u$. It is easy to see that $\eta$ can be extended to an entire function. In particular, $\eta$ is real analytic and hence analytically holonomic. However, the Fourier transform of $\eta$ is not analytically holonomic.
\end{example}

We start in Section \ref{sec:Cexp} by recalling some of the theory of $\cCexp$-class functions from \cite{CCMRS}. In Section 3 we recall and adapt some theory of distributions and their holonomicity, mainly based on \cite{Bjork}. In Section \ref{sec:uCexp} we introduce and study distributions of $\cCexp$-class. In Section \ref{sec:antider} we study anti-derivatives of distributions of $\cCexp$-class and prove Theorem \ref{thm:derivative} and Corollary \ref{thm:gen.sm}. After treating our resolution results in Section \ref{sec:resol}, we are able to finalize the proof of Theorem \ref{mainthm} which is done in Section \ref{sec:holon}.







\section{Functions of $\cC$-class and of $\cCexp$-class}\label{sec:Cexp}

Let us recall some key notions and properties of \cite{CCMRS}, 
in particular, the ring of functions $\cCexp(X)$ for any subanalytic set $X\subset \RR^n$. We will use the language and the results of o-minimality rather freely, the reader is referred to \cite{vdD}, and, likely, we will use the theory of subanalytic sets and functions rather freely, see e.g.~\cite{LR} \cite{Paru2}, \cite{MillerD}.

Call a set $X\subset \RR^n$ for $n\ge 0$ subanalytic if it is definable in the o-minimal structure $\RR_{\rm an}$, the expansion of
the ordered real field by all restricted analytic functions. This notion corresponds to Hironaka's notion of `globally subanalytic' subsets of $\RR^n$ (see e.g.~Definition 1.1 of \cite{CCMRS} and the text below it).
Call a function $f:X\to Y$ subanalytic if ($X$ and) $Y$ and the graph of $f$ are subanalytic sets. Write $\R_{>0}$ for the set of positive real numbers, and $\R_{\ge 0}$ for the set of nonnegative real numbers.

\begin{defn}
\label{def: constructible} For  $X$ a subanalytic set, 
define $\cC(X)$ as the ring of real-valued functions on $X$ generated by all subanalytic functions $X\to\RR$ and by all the functions of the form $\log (g)$ for subanalytic functions $g:X\to\RR_{>0}$.
Functions in  $\cC(X)$ are called constructible functions, or $\cC$-class functions. 
\end{defn}

\begin{defn}
\label{def: gamma function} Consider an integer $\ell\ge 0$, a subanalytic set
$X$ and a subanalytic function $h: X\times\RR\to\RR$. Suppose that for all $x \in X$, the map $t\mapsto h(x,t)$ lies in $L^{1}(\RR)$. For such data
define the function $\gamma_{h,\ell}\colon X\to\CC$ as the oscillatory integral
\[
	\gamma_{h,\ell}(x)=\int_{\RR}h(x,t) (\log|t|)^{\ell}\mathrm{e}^{\mathrm{i}t}\, \mathrm{d}t,
\]
where $\mathrm{e}^{\mathrm{i}t}=\exp(\mathrm{i}t)$ with $\mathrm{i}^2=-1$; note that this integral is automatically finite 
(see the explanation in Definition~2.5 of  \cite{CCMRS}).
\end{defn}

\begin{defn}
\label{def:Cexp} For $X$ a subanalytic set,
define $\Cexp(X)$ as the ring of complex valued
functions on $X$ generated by all functions in $\cC(X)$, all functions of the form $x\mapsto \exp(\mathrm{i}f(x))$ for subanalytic $f:X\to \RR$, and all functions of the form $\gamma_{h,\ell}$
for $\ell\ge 0$ and subanalytic $h:X\times\RR\to \RR$ satisfying for each $x$ in $X$ that  $t\mapsto h(x,t)$ is in $L^{1}(\RR)$.
A function in $\Cexp(X)$ 
is called a function of $\Cexp$-class.
\end{defn}

Note that the definition for $\Cexp(X)$ given in Definition \ref{def:Cexp} is equivalent to the one of \cite[Definition 2.7]{CCMRS} by \cite[Remark 2.6 and Corollary 2.13]{CCMRS}.
The motivating property for the $\cCexp$-class functions is their stability under integration:

\begin{theorem}[Theorem 2.12 \cite{CCMRS}]\label{thm:stab}
Let $X$ be a subanalytic set, $m\ge 0$ be an integer, and let $f$ be in $\cCexp(X\times \RR^m)$. Then there exists a function $g$ in $\cCexp(X)$ such that
$$
g(x) = \int_{y\in \RR^m} f(x,y) \mathrm{d}y
$$
holds for each $x$ in $X$ satisfying that the integral on the right hand side is finite.
\end{theorem}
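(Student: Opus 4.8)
The plan is to reduce the statement to a single parametric oscillatory integral and then evaluate it by a preparation-and-splitting argument. First I would reduce to $m=1$ by Fubini and induction on $m$, and then, by linearity of the integral together with the fact that $\cCexp(X)$ is a ring (taking some care on the locus of convergence), to the case of a single generator-monomial: a product of a constructible function, an exponential $\mathrm e^{\mathrm i\phi}$ of a subanalytic phase $\phi$, and finitely many functions $\gamma_{h_j,\ell_j}$. Expanding each $\gamma_{h_j,\ell_j}(x,t)$ as its defining integral over an auxiliary variable $u_j$ and absorbing the factors $(\log|u_j|)^{\ell_j}$, the $h_j$ and the leading constructible factor into a single constructible amplitude $c$, any such monomial takes the form
\[
f(x,t)=\int_{\RR^{J}} c(x,t,u)\,\mathrm e^{\mathrm i\bigl(\phi(x,t)+u_1+\cdots+u_J\bigr)}\,\mathrm du .
\]
Consequently $g(x)=\int_{\RR}f(x,t)\,\mathrm dt$ is, formally, a parametric oscillatory integral $\int_{\RR^{1+J}} c\,\mathrm e^{\mathrm i\psi}\,\mathrm d(t,u)$ of a constructible amplitude against the exponential of the subanalytic phase $\psi=\phi+u_1+\cdots+u_J$, and the whole problem becomes to show that such an integral lies in $\cCexp(X)$ as a function of $x$. (Granting the equivalence with the integral-based definition \cite[Def.~2.7]{CCMRS} recalled above, this last step is essentially Fubini; but the content of that equivalence is exactly the oscillatory integration carried out next.)

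To prove this core claim I would integrate the variables $t,u_1,\dots,u_J$ one at a time using subanalytic preparation. Partition $X\times\RR^{1+J}$ into finitely many cells and apply the Lion--Rolin preparation theorem \cite{LR} in the current integration variable so that, on each cell, both the amplitude and the derivative of the phase in that variable become monomial up to a subanalytic unit. Two regimes then occur. If the phase is, up to an $x$-dependent additive constant, independent of the integration variable, the exponential factors out and the remaining integral of a constructible function is again constructible by the theorem on stability of the class $\cC$ under integration (the real subanalytic analogue of the Cluckers--Loeser/Comte--Lion--Rolin results). If the phase genuinely varies, I would change variables to bring it to the linear form $\pm v$; the Jacobian and the prepared amplitude then contribute powers of $|v|$ and of $\log|v|$, and the resulting one-variable oscillatory integral is precisely of the shape defining a $\gamma_{h,\ell}$-function, up to boundary terms of strictly smaller complexity handled by induction. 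Iterating over all the integration variables expresses $g$ as a finite sum of products of constructible functions, exponentials of subanalytic phases, and $\gamma$-functions, i.e. as an element of $\cCexp(X)$.

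The main obstacle is the oscillatory integration step and its uniformity in the parameter $x$. The non-oscillatory case is already deep, resting on the stability of $\cC$ under integration, but the genuinely new difficulty is the interaction of that mechanism with the exponential: one must locate the critical locus of $\psi$ in the integration variable (where the transcendental $\gamma$-contributions are created), linearize the phase uniformly over each cell without spoiling the prepared monomial form of the amplitude, and control the boundary terms produced by the substitution and by integration by parts, checking that they remain in $\cCexp$. A secondary but essential point is the precise domain of validity in Theorem \ref{thm:stab}: since the inner $\gamma$-integrals are only conditionally convergent, the interchange of integrations must be justified exactly on the locus where $f(x,\cdot)\in L^1(\RR)$, which is where the claimed equality is asserted.
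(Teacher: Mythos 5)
This statement is not proved in the paper at all: it is imported verbatim as Theorem~2.12 of \cite{CCMRS}, and the present paper treats it as a black box. So there is no in-paper argument to compare against; the only fair comparison is with the actual proof in \cite{CCMRS}, whose overall architecture (preparation of subanalytic data, cell decomposition, splitting into oscillatory and non-oscillatory regimes, and the emergence of the $\gamma_{h,\ell}$ functions as the new transcendental output of the oscillatory regime) your sketch does broadly reproduce. As a roadmap it is pointed in the right direction.

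As a proof, however, it has two genuine gaps. First, the reduction ``by linearity of the integral \ldots to the case of a single generator-monomial'' is not valid as stated: the theorem only assumes that $y\mapsto f(x,y)$ is integrable, and if $f=\sum_k f_k$ with the individual generator-monomials $f_k$ not separately integrable in $y$, you cannot integrate term by term. Handling exactly this — controlling the locus of integrability of each piece and recombining — is a substantial part of the work in \cite{CCMRS} (compare the companion results there on loci, echoed in this paper by Proposition~\ref{prop:limits} and Lemma~\ref{module}), and ``taking some care on the locus of convergence'' does not discharge it. Second, you yourself flag that the core oscillatory step (uniform linearization of the phase over cells, location of its critical locus, the boundary terms from the substitutions, and the justification of Fubini for the only conditionally convergent inner $\gamma$-integrals) is ``the main obstacle''; but that obstacle \emph{is} the theorem — everything before it is routine bookkeeping. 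A proposal that defers precisely these points has not proved the statement, and in the context of this paper the correct move is simply to cite \cite[Theorem~2.12]{CCMRS}, as the authors do.
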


\begin{remark}\label{rem:Cexp}
By Remark 2.14 of \cite{CCMRS}, the $\cCexp$-class functions forms the smallest class of functions which contains the subanalytic functions and which is stable under taking parametric integrals, parametric Fourier transforms, and precompositions with subanalytic functions (as in Lemma \ref{prop:compCexp}).
\end{remark}


%
%


The following auxiliary lemma gives an alternative description for the functions in $\cCexp(X)$.

\begin{lem}\label{prop:h1h2}
\label{module}
Let $X$ be a subanalytic set and let $f$ be in $\cCexp(X)$. Then there 
is a function $H$ in $\cC(X\times \RR)$ 
such that, for each $x$ in $X$, 
the function $t\mapsto H(x,t)$ is $L^1$  and
$$
f(x) = 
\int_{t\in \RR} H(x,t) \exp(\mathrm{i}t) \mathrm{d}t. 
$$
\end{lem}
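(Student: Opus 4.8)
The plan is to give an explicit description of a general $\cCexp$-class function as a single oscillatory integral of a $\cC$-class kernel against $\exp(\mathrm{i}t)$, by tracking how the three types of generators of $\cCexp(X)$ (from Definition \ref{def:Cexp}) combine under the ring operations. The key observation is that the generators $\gamma_{h,\ell}$ are \emph{already} of the required form, since by definition $\gamma_{h,\ell}(x)=\int_{\RR} h(x,t)(\log|t|)^\ell \exp(\mathrm{i}t)\,\mathrm{d}t$ and the kernel $h(x,t)(\log|t|)^\ell$ is in $\cC(X\times\RR)$ by Definition \ref{def: constructible}. So the task reduces to showing that the class of functions admitting such a representation is closed under the operations needed to build up all of $\cCexp(X)$.

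First I would set up the target class $\cR(X)$ consisting of all $f$ for which such an $H\in\cC(X\times\RR)$ exists with $t\mapsto H(x,t)$ in $L^1$ and $f(x)=\int_\RR H(x,t)\exp(\mathrm{i}t)\,\mathrm{d}t$, and then verify that $\cR(X)$ contains all generators of $\cCexp(X)$ and is closed under sums and products. For a generator $c\in\cC(X)$ (which does not depend on $t$), the issue is that a constant-in-$t$ function is not $L^1$, so I cannot naively write $c$ as such an integral; the fix is to use a fixed $\cC$-class ``bump'' profile $\beta(t)$ with $\int_\RR \beta(t)\exp(\mathrm{i}t)\,\mathrm{d}t = 1$ and represent $c(x)$ as $\int_\RR c(x)\beta(t)\exp(\mathrm{i}t)\,\mathrm{d}t$, whose kernel $c(x)\beta(t)$ lies in $\cC(X\times\RR)$. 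For a generator $\exp(\mathrm{i}f(x))$ with $f$ subanalytic, I would absorb the phase into a translation of the integration variable: writing $\exp(\mathrm{i}f(x))=\int_\RR \beta(t-f(x))\exp(\mathrm{i}t)\,\mathrm{d}t$, whose kernel $\beta(t-f(x))$ is again $\cC$-class since $\beta$ is and $f$ is subanalytic (using stability of $\cC(X\times\RR)$ under subanalytic reparametrisation, as in Lemma \ref{prop:compCexp}).

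The multiplicative closure is where the main work lies, and I expect it to be the principal obstacle. A product of two oscillatory integrals $\bigl(\int H_1(x,s)\exp(\mathrm{i}s)\,\mathrm{d}s\bigr)\bigl(\int H_2(x,t)\exp(\mathrm{i}t)\,\mathrm{d}t\bigr)$ is a double integral $\iint H_1(x,s)H_2(x,t)\exp(\mathrm{i}(s+t))\,\mathrm{d}s\,\mathrm{d}t$, and to fold it back into a single oscillatory integral I would substitute $u=s+t$ and integrate out the complementary variable, producing a convolution kernel $\widetilde H(x,u)=\int_\RR H_1(x,s)H_2(x,u-s)\,\mathrm{d}s$. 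The delicate points are (i) that $\widetilde H$ is again $\cC$-class: this follows from Theorem \ref{thm:stab} applied to the $\cC$-class (hence $\cCexp$-class) integrand, together with the fact that a $\cCexp$-class function which happens to be real-valued and arising from a genuine $\cC$-class convolution can be kept within $\cC(X\times\RR)$ --- here one must invoke the precise structural results of \cite{CCMRS} (the cited Remark 2.6 and Corollary 2.13) to stay inside $\cC$ rather than merely $\cCexp$; and (ii) the $L^1$-in-$u$ integrability of $\widetilde H$, which follows from Young's convolution inequality since each $H_i(x,\cdot)$ is $L^1$. Finally, by induction on the number of generators, every element of the ring $\cCexp(X)$ lies in $\cR(X)$, which is exactly the asserted representation.

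I would flag that the genuinely technical step is keeping the convolved kernel inside $\cC(X\times\RR)$ rather than the a priori larger class $\cCexp(X\times\RR)$; absent the sharp form of the integration theorem one only obtains a $\cCexp$-class kernel, which would make the statement circular. This is precisely why the equivalence of the two definitions of $\cCexp$ noted after Definition \ref{def:Cexp}, and the structural results of \cite{CCMRS}, are essential, and I would lean on them to close the argument cleanly.
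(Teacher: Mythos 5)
Your proposal is correct in outline but takes a genuinely different route from the paper. The paper does not re-derive multiplicative closure: it invokes the normal form from \cite{CCMRS} (Definition 2.7 together with Remark 2.6 and Corollary 2.13), by which every element of $\cCexp(X)$ is a finite sum of terms $c(x)\exp(\mathrm{i}g(x))\gamma_{h,\ell}(x)$ with $c\in\cC(X)$, so the only computation needed is the translation $t\mapsto t-g(x)$ absorbing the phase into the kernel, giving $H(x,t)=c(x)\,h(x,t-g(x))(\log|t-g(x)|)^{\ell}$. You instead prove closure of the representable class under products by convolving kernels, which amounts to reproving (a version of) that normal-form statement from scratch. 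Your treatment of the generators $c\in\cC(X)$ and $\exp(\mathrm{i}f)$ via a bump profile $\beta$ is fine (a real-valued subanalytic $\beta$ with $\int_{\RR}\beta(t)\exp(\mathrm{i}t)\,\mathrm{d}t=1$ exists, e.g.\ $\tfrac12\mathbf{1}_{[-\pi/2,\pi/2]}$), and is essentially the same translation trick in disguise. The paper's route buys brevity; yours buys self-containedness, at the price of the convolution step.

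That step is where your supporting citations do not carry the weight, as you partly anticipate. Theorem \ref{thm:stab} only yields that the convolved kernel $\widetilde H(x,u)=\int_{\RR} H_1(x,s)H_2(x,u-s)\,\mathrm{d}s$ is of $\cCexp$-class, which, as you say, would be circular; and Remark 2.6 and Corollary 2.13 of \cite{CCMRS} concern the equivalence of two definitions of $\cCexp$, not the class $\cC$. The fact you actually need is that the real-valued constructible class $\cC$ is itself stable under parametric integration on the locus of integrability --- the Cluckers--Miller theorem, which appears in this paper as \cite{CMillerIMRN} and is already used in Corollary \ref{cor:int:cC}. With that theorem the convolution kernel does land in $\cC(X\times\RR)$, modulo one further point you omit: the convolution of two $L^1$ functions is a priori only finite for almost every $u$, so $\widetilde H(x,\cdot)$ must be set to $0$ on the (subanalytic, measure-zero in $u$) non-integrability locus before concluding; this does not change $\int_{\RR}\widetilde H(x,u)\exp(\mathrm{i}u)\,\mathrm{d}u$. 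With those two repairs your argument closes.
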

\begin{proof}
By Theorem \ref{thm:stab} and \cite[Remark 2.6, Definition 2.7 and Corollary 2.13]{CCMRS}, it is enough to show that the product function
$$
x\mapsto \exp(\mathrm{i}g(x)) \cdot \gamma_{h,\ell}(x)
$$
on $X$ is of the form mentioned in the lemma, with $g:X\to\RR$ subanalytic, $\ell\ge 0$ an integer, and $h:X\to\RR$ a subanalytic function satisfying for all $x\in X$ that $t\mapsto h(x,t)$ lies in $L^{1}(\RR)$.
Observe the following
$$
f(x) := e ^{\mathrm{i}g(x)} \gamma_{h,\ell}(x) = e ^{\mathrm{i}g(x)} \int_{t\in\RR} h(x,t)(\log |t|)^\ell e^{\mathrm{i}t } \mathrm{d}t
= \int_{t\in\RR} h(x,t)(\log |t|)^\ell e^{\mathrm{i} ( t +g(x) )} \mathrm{d}t.
$$
Now let $h'$ be the function sending $(x,t)$ to $h(x,t-g(x))$. Clearly $h'$ is subanalytic, as a composition of subanalytic functions. Then we can write, by the change of variables formula,
$$
f(x) = \int_{t\in\RR} h'(x,t)(\log |t-g(x)|)^\ell e^{\mathrm{i} t} \mathrm{d}t
$$
and we are done by putting $H(x,t):=h'(x,t)(\log |t-g(x)|)^\ell$, which clearly lies in $\cC(X\times \RR)$, and which satisfies clearly the condition of integrability in the $t$-variable (see a short explanation in Definition~2.5 of  \cite{CCMRS}).
\end{proof}



Another key result about the $\cCexp$-class says that pointwise limits can be taken within the $\cCexp$-class, as follows.

\begin{prop}[Proposition 7.7 of \cite{CCMRS}]\label{prop:limits}
	Let $f$ be in $\cCexp(X\times \R)$ for some subanalytic set $X \subset \R^n$. There \blue{exists} $g$ in $\cCexp(X)$
such that, for those $x$ in $X$ such that the limit $\lim_{y\to+\infty}f(x,y)$ exists in $\R$, one has
$$
\lim_{y\to+\infty}f(x,y) = g(x).
$$
Furthermore, there is a $\cCexp$-class function $h$ on $X$ such that the set
$$
\{x\in X\mid \mbox{$\lim_{y\to+\infty}f(x,y)$ exists in $\R$}\}
$$
equals the zero locus of $h$ in $X$.
\end{prop}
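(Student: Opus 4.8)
The plan is to turn the statement into an asymptotic analysis of a parametric oscillatory integral as the parameter tends to a boundary point, and then to read off both the limiting function $g$ and the existence locus from a prepared form. \emph{First,} I would use Lemma \ref{prop:h1h2} (applied to $f\in\cCexp(X\times\RR)$, with the two real variables $y$ and the internal integration variable) to write
\[
f(x,y)=\int_{t\in\RR}H(x,y,t)\exp(\mathrm{i}t)\,\mathrm{d}t,
\]
for some $H\in\cC(X\times\RR\times\RR)$ that is $L^1$ in $t$ for each $(x,y)$. Substituting $y=1/s$ converts the limit $y\to+\infty$ into a one-sided limit $s\to 0^{+}$, so it suffices to analyse $\widetilde f(x,s)=\int_{t}H(x,1/s,t)\exp(\mathrm{i}t)\,\mathrm{d}t$ as $s\to 0^{+}$, where $(x,s,t)\mapsto H(x,1/s,t)$ is again constructible. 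The key conceptual point is that the integral representation has moved \emph{all} oscillation into the explicit factor $\exp(\mathrm{i}t)$, so that the $s$-dependence of the integrand is purely of $\cC$-class.

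\emph{Second,} I would apply the preparation theorem for subanalytic and constructible functions (Lion--Rolin, and its logarithmic extension used in \cite{CCMRS}) to $H(x,1/s,t)$ in the pair of variables $(s,t)$ near $s=0$. After a finite subanalytic partition of the base and of the $(s,t)$-region, $H$ becomes a finite sum of terms $a(x)\,s^{\,r}(\log s)^{k}\,\theta(x,s,t)$, with $r\in\RR$, $k\in\NN$, constructible coefficient $a$, and $\theta$ a prepared monomial-times-unit in $t$. Integrating each such term in $t$ against $\exp(\mathrm{i}t)$ produces $\gamma$-type functions, and by Theorem \ref{thm:stab} the result stays in the $\cCexp$-class. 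Collecting contributions by their $s$-order yields a finite expansion
\[
\widetilde f(x,s)=\sum_{j} c_{j}(x)\,s^{\,r_{j}}(\log s)^{k_{j}}\bigl(1+o(1)\bigr)\qquad(s\to 0^{+}),
\]
with finitely many exponent pairs $(r_{j},k_{j})$ and coefficients $c_{j}\in\cCexp(X)$.

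\emph{Third,} I would classify the terms by asymptotic size: a term with $r_{j}>0$, or with $r_{j}=0$ and $k_{j}>0$, diverges unless $c_{j}\equiv 0$; a term with $r_{j}<0$ tends to $0$; and the single term with $r_{j}=0,\ k_{j}=0$ contributes its coefficient to the limit. Hence the limit exists at $x$ exactly when every ``divergent'' coefficient vanishes there, and on that locus it equals the distinguished coefficient. I would then define $g\in\cCexp(X)$ to be that distinguished coefficient, and set $h:=\sum_{j}\lvert c_{j}\rvert^{2}$ over the divergent indices; since $\cCexp(X)$ is a ring closed under $\lvert\cdot\rvert^{2}$, the function $h$ lies in $\cCexp(X)$ and its zero locus is precisely the set where the limit exists, as required.

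\textbf{Main obstacle.} The delicate step is the second one: controlling the $s\to 0^{+}$ behaviour of the oscillatory parametric integral \emph{uniformly in $x$}, that is, certifying that no oscillation in $s$ survives outside the finite prepared expansion and that the surviving coefficients $c_{j}$ genuinely belong to $\cCexp(X)$. This requires combining the full strength of the constructible preparation theorem with the integral-stability Theorem \ref{thm:stab}, likely organised as an induction on the syntactic complexity of the $\cCexp$-class function (the base case being $\cC$-class functions, where the monotonicity and preparation of o-minimal geometry apply directly). This verification carries the bulk of the argument.
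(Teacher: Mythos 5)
This proposition is not proved in the paper at all: it is imported verbatim as Proposition~7.7 of \cite{CCMRS}, so there is no internal argument to compare yours against. Judged as a standalone proof, your proposal has a genuine gap at its central step.

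The problem is the claimed expansion $\widetilde f(x,s)=\sum_j c_j(x)\,s^{r_j}(\log s)^{k_j}(1+o(1))$ in your second step. Preparation of the constructible kernel $H(x,1/s,t)$ in the variable $s$ yields terms $a_j(x,t)\,s^{r_j}(\log s)^{k_j}u_j(x,s,t)$, but after integrating against $\exp(\mathrm{i}t)$ the coefficient of $s^{r_j}(\log s)^{k_j}$ is $\int_t a_j(x,t)u_j(x,s,t)\exp(\mathrm{i}t)\,\mathrm{d}t$, which still depends on $s$ through the unit and through the $t$-cell walls, and this dependence is \emph{not} of power-log type. A concrete counterexample to the asserted expansion is $f(x,y)=\exp(\mathrm{i}y)$: via Lemma~\ref{prop:h1h2} one writes it as $\int_t h_0(t-y)\exp(\mathrm{i}t)\,\mathrm{d}t$ for a subanalytic bump $h_0$ with $\gamma_{h_0,0}=1$, and after the substitution $y=1/s$ the $t$-integral evaluates to $\exp(\mathrm{i}/s)$ --- bounded, with no limit as $s\to 0^{+}$, and admitting no finite expansion in powers of $s$ and $\log s$ times $(1+o(1))$. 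Consequently your third step collapses: the dichotomy ``divergent coefficient nonzero versus limit exists'' is false, since a term can be bounded and non-convergent without any of your $c_j$ detecting it, so neither the definition of $g$ nor the existence-locus function $h=\sum|c_j|^2$ is correct.

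The real content of the result (and the reason \cite{CCMRS} devote a structure theorem to it) is precisely the analysis of these surviving oscillatory contributions: one needs a preparation theorem for $\cCexp$-class functions themselves in the variable $y$, whose prepared form retains factors $\exp(\mathrm{i}\phi(x,y))$ and $\gamma$-type transcendentals, together with an argument that the existence of the limit forces those oscillatory parts to degenerate. Your ``main obstacle'' paragraph correctly locates where the difficulty sits, but the step is not merely delicate --- as written it is false, and repairing it requires the full $\cCexp$-preparation machinery of \cite{CCMRS} rather than constructible preparation under the integral sign.
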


\begin{prop}\label{prop:loci}
Let $f$ be in $\cCexp(U)$ for some subanalytic open set $U \subset \R^n$. Then, there is a subanalytic closed set $C\subset U$ with $\dim C < n$ such that $f$ is analytic on $U\smallsetminus  C$.
\end{prop}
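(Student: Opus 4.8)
The plan is to reduce, via the integral representation of Lemma~\ref{prop:h1h2}, to proving analyticity of a single parametric oscillatory integral, and to locate the bad set $C$ by subanalytic cell decomposition. I rely throughout on two standard o-minimality facts in $\RR_{\mathrm{an}}$: every subanalytic function is real analytic off a subanalytic set of strictly smaller dimension, and the dimension of a subanalytic set is unchanged under taking closures. Consequently a finite union of subanalytic sets of dimension $<n$, together with its closure in $U$, is again a subanalytic closed set of dimension $<n$, so it is harmless to enlarge the candidate bad set finitely many times and to close it up at the end.

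By Lemma~\ref{prop:h1h2} there is a function $H$ in $\cC(U\times\RR)$, with $t\mapsto H(x,t)$ in $L^1(\RR)$ for every $x\in U$, such that $f(x)=\int_{\RR}H(x,t)\exp(\mathrm{i}t)\,\mathrm{d}t$. Thus it suffices to prove the proposition for such $f$. Since $H$ is a $\cC$-class function, i.e. a polynomial in logarithms of positive subanalytic functions with subanalytic coefficients, applying subanalytic cell decomposition to $H$ on $U\times\RR$ yields a subanalytic set $Z\subset U$ with $\dim Z<n$ such that over the open set $V=U\smallsetminus Z$ each fibre $\{x\}\times\RR$ is partitioned into finitely many open bands $a_j(x)<t<a_{j+1}(x)$, with $-\infty=a_0(x)<a_1(x)<\dots<a_N(x)=+\infty$ analytic subanalytic functions on $V$, such that on each band $H$ is jointly real analytic in $(x,t)$, and the logarithmic singularities of $H$ occur only at the finite endpoints $a_j(x)$, where they remain $L^1$-integrable.

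On $V$ I then write $f$ as the finite sum $\sum_{j}\int_{a_j(x)}^{a_{j+1}(x)}H(x,t)\exp(\mathrm{i}t)\,\mathrm{d}t$ and prove that each summand depends real analytically on $x$. For a band with finite endpoints this follows by differentiating under the integral sign, the moving endpoints being handled by the Leibniz rule and the boundary values being analytic; for a band abutting a finite endpoint carrying a logarithmic singularity of $H$ one uses its integrability; for the two unbounded bands one uses the $L^1$-control of $H(x,\cdot)$ together with integration by parts against the oscillatory factor $\exp(\mathrm{i}t)$ to obtain both convergence and the domination needed to differentiate under the integral. Taking $C$ to be the closure in $U$ of $Z$ then yields a subanalytic closed set of dimension $<n$ off which $f$ is analytic.

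The main obstacle is precisely this last analyticity statement for the parametric oscillatory integral: to pass derivatives through the integral, or equivalently to extend $f$ holomorphically to a complex neighbourhood of a point of $V$, one needs a local uniform (in $x$) domination of the $x$-derivatives of the integrand, whereas the radius of analyticity of $t\mapsto H(x,t)$ in the $x$-direction may a priori shrink as $t$ approaches an endpoint $a_j(x)$ or tends to $\pm\infty$. I would overcome this by exploiting the subanalytic structure once more: on a relatively compact subanalytic piece of $V$ the cell data $a_j$ and the analytic expressions for $H$ are uniformly controlled, which produces a fixed complex neighbourhood of $x$ and a uniform $L^1$-dominating function in $t$, so that the Cauchy estimates for the derivatives of $f$ hold locally and $f$ is real analytic on $V$.
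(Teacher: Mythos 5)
Your proof follows essentially the same route as the paper's: reduce via Lemma~\ref{prop:h1h2} to a single oscillatory integral of a $\cC$-class kernel $H$, apply an analytic (cylindrical) cell decomposition to the subanalytic data building up $H$ on $U\times\RR$, take $C$ to be the (closed, lower-dimensional) set of base points over which no open cell lies, and argue that the integral over each band with analytic walls depends analytically on the base point. The one step the paper compresses into a single asserted sentence --- analyticity of $\int_{a(x)<t<b(x)}F(x,t)\,dt$ in $x$ --- is exactly the obstacle you isolate and address via uniform domination on relatively compact pieces, so your write-up is, if anything, more explicit than the paper's at the crucial point.
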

\begin{proof}
Write $f$ as an integral as in Lemma \ref{module}, so that
$$
f(x) = 
\int_{t\in \RR} H(x,t) \exp(\mathrm{i}t) \mathrm{d}t 
$$
for some $H$ in $\cC(U\times \RR)$ such that, for each $x$ in $U$,
the function $t\mapsto H(x,t)$ is $L^1$. Write $H$ as a finite sum of finite products of subanalytic functions $h$ on $U\times \RR$ and functions of the form $\log g$ for some positively valued subanalytic functions $g$ on $U\times \R$. Take a cylindrical, analytic cell decomposition so that on each occurring cell, the restrictions of all the occurring $g$ and $h$ are analytic (by the word cylindrical is meant that the projection to the first $i$ coordinates is again a cell decomposition, for each $i$). Note that the restriction of $H$ to any occurring open cell is analytic. Take all the open occurring cells $A$ in $U\times \R$, and let $C$ be the complement of the union of the coordinate projections of these open cells $A$ to $U$. Then $C$ is as desired. Indeed, an integral $\int_{a(x)<t<b(x)}F(x,t) dt$ of an analytic function $F$ on an analytic cell $A$ with cell walls $a(x)$ and $b(x)$ bounding $t$ is analytic in $x$ running over the projection of $A$ to $\RR^n$. For the notion of walls of cells, see Definition 4.2.5 of \cite{CPW}.
\end{proof}


%
%
%



For an open set $X\subset \RR^n$, write $C_{c}^{\infty}(X)$ for the 
$C^\infty$ functions with compact support on $X$.
	\begin{prop}\label{prop:cutoffCexp} If $X$ is a subanalytic open set in $\RR^n$ and $K\subset X$ is a compact subanalytic subset then one can find $\chi \in C_{c}^{\infty}(X)$ of $\cCexp$-class with $0\leq \chi \leq 1$ so that $\chi=1$ in a neighborhood $K'$ of $K$. Furthermore, if $K$ is a closed ball, then we can ensure that the support of $\chi$ is subanalytic as well.
	\end{prop}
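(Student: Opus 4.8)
The plan is to build the cutoff function $\chi$ as an integral of a bump against the indicator of a slightly enlarged copy of $K$, and then check by hand that this smoothing procedure keeps us inside the $\cCexp$-class. More precisely, since $K$ is compact and contained in the open set $X$, I would first choose $\varepsilon>0$ small enough that the $3\varepsilon$-neighborhood of $K$ is still a compact subset of $X$; such an $\varepsilon$ exists by compactness. Let $K_\varepsilon=\{x\in\RR^n\mid d(x,K)\le\varepsilon\}$ be the closed $\varepsilon$-neighborhood of $K$. Because $K$ is subanalytic and the distance function to a subanalytic set is subanalytic, the sets $K_\varepsilon$ and their indicator functions $\mathbf{1}_{K_\varepsilon}$ are subanalytic (hence in particular of $\cCexp$-class as functions on $\RR^n$).

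Next I would fix once and for all a nonnegative $C^\infty$ bump $\psi$ of $\cCexp$-class, supported in the ball of radius $\varepsilon$ centered at the origin, with $\int_{\RR^n}\psi=1$; its existence is what I would reduce the whole statement to, and I address this separately below. Then set
$$
\chi(x)=\int_{\RR^n}\mathbf{1}_{K_{\varepsilon}}(y)\,\psi(x-y)\,\mathrm{d}y=\int_{\RR^n}\mathbf{1}_{K_{\varepsilon}}(x-z)\,\psi(z)\,\mathrm{d}z.
$$
This is a standard mollification, so $\chi$ is $C^\infty$, satisfies $0\le\chi\le 1$, equals $1$ on the $0$-neighborhood $K'=\{x\mid d(x,K)<\varepsilon/2\}$ wait—more carefully, $\chi(x)=1$ whenever the whole $\varepsilon$-ball around $x$ lies in $K_\varepsilon$, which holds on $K$ itself and on a neighborhood of $K$, while $\chi$ is supported inside $K_{2\varepsilon}\subset X$, a compact set. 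Thus $\chi\in C_c^\infty(X)$ with the required sandwiching. The crucial point is that $\chi$ is of $\cCexp$-class: the integrand $(x,y)\mapsto\mathbf{1}_{K_\varepsilon}(y)\,\psi(x-y)$ lies in $\cCexp(\RR^n\times\RR^n)$ (a product of a subanalytic indicator with a $\cCexp$-class function precomposed with the subanalytic map $(x,y)\mapsto x-y$, using Remark \ref{rem:Cexp}), and the integral over $y$ is finite for every $x$ since the integrand is bounded with compact support. Hence Theorem \ref{thm:stab} applies and yields $\chi\in\cCexp(\RR^n)$; restricting to $X$ gives $\chi\in\cCexp(X)$.

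I expect the genuine obstacle to be the construction of the mother bump $\psi$: producing a single $C^\infty$, compactly supported, $\cCexp$-class function with integral $1$. The classical bump $\exp(-1/(1-\|x\|^2))$ is $C^\infty$ but not obviously $\cCexp$, because $\cCexp$-class functions are built from subanalytic functions, their exponentials $\exp(\mathrm{i}f)$ with \emph{imaginary} argument, logarithms, and the oscillatory $\gamma_{h,\ell}$ integrals, and it is not clear a priori that a real exponential of an unbounded subanalytic function belongs to this class. I would circumvent this by constructing $\psi$ itself as a parametric (Fourier-type) integral: for instance, one can take a product over the $n$ coordinates of a one-dimensional profile obtained as an integral of a Schwartz-type subanalytic kernel, or build an infinitely flat yet $\cCexp$-class bump by integrating a subanalytic family so that the resulting function and all its derivatives vanish to infinite order at the boundary of the support. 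The verification that such $\psi$ is simultaneously $C^\infty$ with compact support, nonnegative (after normalization), and of $\cCexp$-class—rather than the mollification bookkeeping—is the delicate step, and I would lean on the closure of $\cCexp$ under integration (Theorem \ref{thm:stab}) together with the limit result (Proposition \ref{prop:limits}) to control smoothness and flatness at the boundary.

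Finally, for the last sentence of the statement, when $K$ is a closed ball the sets $K_\varepsilon$ are again closed balls (with the same center and larger radius), so they and their indicators are not merely subanalytic but given by an explicit polynomial inequality; the resulting $\chi$ then has support contained in a closed ball $K_{2\varepsilon}$, and I would argue that $\chi$ itself is supported on a subanalytic set by noting that $\{\chi\neq 0\}\subseteq K_{2\varepsilon}$ together with the fact that, by construction through Theorem \ref{thm:stab}, the support locus of $\chi$ is a $\cCexp$-definable—hence subanalytic, being a zero/nonzero locus controlled by subanalytic data—subset of $X$, so its closure is subanalytic and bounded.
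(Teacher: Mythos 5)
Your overall mollification strategy is exactly the paper's: convolve the (subanalytic, hence $\cCexp$-class) indicator of an $\varepsilon$-enlargement of $K$ with a $\cCexp$-class bump and invoke Theorem \ref{thm:stab} to stay in the class. That part is fine. However, there are two genuine gaps.

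First, you correctly identify the crux --- producing a single $C^\infty$, compactly supported, nonnegative $\cCexp$-class bump of integral $1$ --- but you do not actually resolve it; you only sketch two possible circumventions (``a Fourier-type integral of a Schwartz-type subanalytic kernel'', ``integrating a subanalytic family'') without carrying either out. Since the entire proposition reduces to this point, the proof is incomplete as written. The paper's resolution is much more direct than what you anticipate: the function $t\mapsto e^{-1/t}$ for $t>0$ (extended by $0$) is already known to be of $\cCexp$-class (Example~7.4 of \cite{CCMRS}), so the classical bump $f(1-\lVert x\rVert^2)$, suitably normalized, works verbatim; no detour around the real exponential is needed.

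Second, your justification of the subanalyticity of $\Supp(\chi)$ when $K$ is a ball does not work. You argue that the nonvanishing locus of $\chi$ is ``$\cCexp$-definable, hence subanalytic,'' but no such implication is available: zero loci of $\cCexp$-class functions are not known to be subanalytic (indeed, Proposition \ref{prop:limits} produces exceptional sets only as zero loci of $\cCexp$-class functions precisely because one cannot do better, and the paper's closing open questions treat such loci as strictly more general objects than subanalytic sets). The paper instead proves subanalyticity of the support by a direct geometric computation: when $K=\overline{B}(a,R)$ one checks by hand that $K_{2\varepsilon}=\overline{B}(a,R+2\varepsilon)$, that $\Supp(v)+\Supp(\rho_\varepsilon)=K_{3\varepsilon}$, and that every point of the open ball $B(a,R+3\varepsilon)$ carries positive mass of the convolution, so that $\Supp(\chi)$ is exactly the closed ball $K_{3\varepsilon}$. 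You would need to replace your definability argument with such an explicit identification of the support.
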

	\begin{proof}
		We adapt the proofs of Lemma 1.2.3 and Theorem 1.4.1 of \cite{Hormander83} to our needs. Consider the following function on $\R$
		$$f:t\mapsto \left\{
			\begin{array}{ll}
				e^{-\frac{1}{t}} & \text{if $t>0$} \\
				0 & \text{if $t\leq 0$}
			\end{array}
		\right. .
		$$
		This function is known to be in $C^{\infty}(\RR)$ and also of $\cCexp$-class by (see Example 7.4 of \cite{CCMRS}).
		Next we consider the function $\rho$ on $\R^n$ defined by
		$$ \rho : x \mapsto \frac{1}{\int_{\RR^n} f(1-||t||^2) dt} f(1-||x||^2),$$ with $||x||$ the Euclidean norm on $\R^n$.
		This function is of $\cCexp$-class, lies in $C_{c}^{\infty}(\RR^n)$ with support the unit ball $B$, and satisfies $\int \rho  dx=1$.
		Now let $\eps>0$ be sufficiently small so that
		$$\forall x\in K,\: \forall y \in \RR^n \smallsetminus  X,\:|x-y|\geq 4\eps$$ and
		let $v$ be the characteristic function of $K_{2\eps}$ where we use the notation
		$$K_{\eps} := \{y \in \RR^n \mid \exists x \in K,\: |x-y|\leq \eps\}$$
and likewise for $K_{2\eps}$.
		As $K$ is a subanalytic set, $v$ is subanalytic and in particular of $\cCexp$-class. Consider
		$\rho_{\eps} : x \mapsto \frac{1}{\eps^{n}}\rho\left( \frac{x}{\eps} \right)$ which has as support the ball
		$\{x \in \RR^n \mid  ||x||\leq\eps\}$ and satisfies $\int \rho_{\eps} dx=1$. So, the convolution
		$$\chi := v*\rho_{\eps}$$
		is a $C^\infty$ function with compact support in $K_{3\eps}$ and by Theorem \ref{thm:stab} it is also of $\cCexp$-class, and one easily checks that
		$1-\chi=(1-v)*\rho_\eps$ vanishes on $K_\eps$ which is a neighborhood of $K$. Hence, $\chi$ is as desired.
Let us now assume that $K$ is a closed ball $\Bf(a,R)$ around a point $a$ of radius $R>0$. Note the following in this case.
			\begin{itemize}
				\item One has $K_{2\eps}=\Bf(a,R+2\eps)$.  \\
					( ``$\subset$'' is obvious, and for ``$\supset$'' :
				for $y \in \Bf(a,R+2\eps)\smallsetminus  \{a\}$, take $x=a+\frac{y-a}{\norm{y-a}}R \in \Bf(a,R)$, we have $\norm{x-y}\leq 2\eps$.)
			        \item We know that $\Supp(\chi)$, the support of $\chi$, is a closed subset of
			$$\Supp(v)+\Supp(\rho_{\eps}) = K_{2\eps}+K_{\eps} = K_{3\eps}.$$
			For the last equality : ``$\subset$'' is obvious and for ``$\supset$'', if $z \in K_{3\eps}\smallsetminus  K_{2\eps}$, then we can write $z=x+y$ with $y=a+\frac{z-a}{\norm{z-a}}(R+2\eps) \in K_{2\eps}$ and $x=z-y \in K_\eps$. If $z \in K_{2\eps} \subset  K_{3\eps}$, we can write $z=x+y$ with $x=0 \in K_\eps$ and $z=y \in K_{2\eps}$.
		\item  Let $z$ be in the open ball $B(a,R+3\eps)$ around $a$ of radius $R+3\eps$. Define $y=a+\frac{z-a}{\norm{z-a}}(R+2\eps)$.
			One has  $y \in K_{2\eps}\cap B(z,\eps)$, and there is $r>0$ such that $B(y,r)\subset B(z,\eps)$ and such that the measure of the $B(y,r)\cap K_{2\eps}$ is nonzero. Thus, one has $\chi(z)>0$ and $z \in \Supp(\chi)$. Thus, $B(a,R+3\eps)\subset \Supp(\chi)$, and as the support is closed, we conclude that $K_{3\eps}=\Supp(\chi)$.
	                \end{itemize}
		This shows that when $K$ is a closed ball, $\chi$ can be taken such that its support is subanalytic as well.
	\end{proof}


The following lemma about precomposition with subanalytic functions easily follows from the definition of $\cCexp$-class functions.
	\begin{lem}\label{prop:compCexp} Consider a subanalytic function $g:X\to Y$ for some subanalytic sets $X$ and $Y$ and let $f$ be in $\cCexp(Y)$. Then the composition $f\circ g$ lies in $\cCexp(X)$.
	\end{lem}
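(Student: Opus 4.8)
The plan is to reduce the composition statement to the three generating types of $\cCexp$-class functions and check stability of each under precomposition with a subanalytic map. Since $\cCexp(Y)$ is a ring generated by (i) constructible functions in $\cC(Y)$, (ii) functions $y\mapsto\exp(\mathrm{i}\varphi(y))$ for subanalytic $\varphi:Y\to\RR$, and (iii) the integral functions $\gamma_{h,\ell}$, and since ring operations commute with precomposition (i.e. $(f_1+f_2)\circ g=f_1\circ g+f_2\circ g$ and likewise for products), it suffices to prove that $f\circ g\in\cCexp(X)$ for each generator $f$ of these three types.

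First I would treat the easy types. For a generator of type (i): $\cC(Y)$ is generated by subanalytic functions $Y\to\RR$ and by $\log(u)$ for subanalytic $u:Y\to\RR_{>0}$; precomposing a subanalytic function with the subanalytic $g$ yields a subanalytic function on $X$ (subanalytic maps are closed under composition), and $\log(u)\circ g=\log(u\circ g)$ with $u\circ g:X\to\RR_{>0}$ subanalytic, so the result lies in $\cC(X)\subset\cCexp(X)$. For type (ii): $\exp(\mathrm{i}\varphi)\circ g=\exp(\mathrm{i}(\varphi\circ g))$ with $\varphi\circ g:X\to\RR$ subanalytic, again a generator of $\cCexp(X)$.

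The genuinely delicate case is type (iii), the $\gamma_{h,\ell}$ functions. Here I would observe that
\[
\gamma_{h,\ell}(g(x))=\int_{\RR}h(g(x),t)\,(\log|t|)^{\ell}\,\mathrm{e}^{\mathrm{i}t}\,\mathrm{d}t,
\]
and set $\tilde h(x,t):=h(g(x),t)=h\circ(g\times\mathrm{id}_{\RR})(x,t)$. Since $g\times\mathrm{id}_{\RR}:X\times\RR\to Y\times\RR$ is subanalytic and $h$ is subanalytic, $\tilde h$ is subanalytic on $X\times\RR$. The integrability hypothesis is inherited pointwise: for each fixed $x\in X$ the map $t\mapsto\tilde h(x,t)=h(g(x),t)$ equals $t\mapsto h(y,t)$ for $y=g(x)\in Y$, which lies in $L^1(\RR)$ by the hypothesis on $h$. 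Hence $\gamma_{\tilde h,\ell}$ is a well-defined generator of $\cCexp(X)$, and by construction $\gamma_{\tilde h,\ell}=\gamma_{h,\ell}\circ g$.

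The main obstacle, such as it is, lies entirely in this last step: one must verify that precomposition interacts correctly with the integral defining $\gamma_{h,\ell}$, i.e. that substituting $g(x)$ for the parameter simply reparametrizes $h$ without disturbing the $t$-integration or the integrability condition. This is immediate because $g$ acts only on the parameter variable and not on the integration variable $t$, so no change of variables in $t$ is needed and the $L^1$-condition transfers verbatim from $y=g(x)$. Assembling the three cases and invoking the ring structure then gives $f\circ g\in\cCexp(X)$ for arbitrary $f\in\cCexp(Y)$, completing the proof.
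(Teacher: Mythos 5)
Your proof is correct and is exactly the verification the paper leaves implicit (its proof is just ``Immediate from the definitions''): reduce to the three generating types of $\cCexp(Y)$ and observe that each is stable under precomposition with a subanalytic map, with the $\gamma_{h,\ell}$ case handled by replacing $h$ with $h\circ(g\times\mathrm{id}_{\RR})$, which remains subanalytic and inherits the fiberwise $L^1$ condition.
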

	\begin{proof}
Immediate from the definitions.
\end{proof}

We prove an auxiliary Proposition \ref{prop:Cexp:int}, based on the following lemma and its corollary.

\begin{lem}\label{lem:H:int}
Suppose that $H:\R^n\to \R$ is of $\cC$-class. 
Then there exists a bounded subanalytic $g:\R^n\to \R_{>0}$ 
such that the product $gH$ is integrable over $\R^n$.
\end{lem}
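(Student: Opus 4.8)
The plan is to reduce the statement, in two moves, to the trivial integrability of a negative power of $1+\norm{x}^2$, the first move removing the logarithms from $H$ and the second move installing a denominator that tames the remaining subanalytic growth all at once.

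First I would dominate $\abs{H}$ by a genuinely \emph{subanalytic} function. Writing $H$ as a finite sum of finite products of subanalytic functions and of functions $\log g_0$ with $g_0\colon\R^n\to\R_{>0}$ subanalytic, it suffices to majorise each logarithmic factor by something subanalytic and nonnegative. For this I would use the elementary inequality $\abs{\log t}\le t+t^{-1}$, valid for all $t>0$, which yields $\abs{\log g_0(x)}\le g_0(x)+g_0(x)^{-1}$, a subanalytic nonnegative right-hand side (recall $g_0>0$ everywhere, so $g_0^{-1}$ is a total subanalytic function). Multiplying and adding the resulting bounds over the factors and summands, I obtain a subanalytic $\Phi\colon\R^n\to\R_{\ge 0}$ with $\abs{H(x)}\le\Phi(x)$ for all $x$. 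Note that $\Phi$ may well be unbounded, both near the singular locus of $H$ and at infinity; this will be harmless.

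Second, I would set
$$ g(x)=\frac{1}{(1+\norm{x}^2)^N\,(1+\Phi(x)^2)}, $$
where $N$ is an integer with $2N>n$. Then $g$ is subanalytic (the class is closed under the field operations and under reciprocals of nowhere-vanishing functions, and $\Phi$ and the norm are subanalytic), it takes values in $\R_{>0}$, and it is bounded by $1$ since both denominator factors are $\ge 1$. Because $g>0$ and $\abs{H}\le\Phi$, one has $\abs{gH}=g\abs{H}\le g\Phi$, and using $\frac{t}{1+t^2}\le\frac12$ for $t\ge 0$,
$$ g(x)\,\Phi(x)=\frac{1}{(1+\norm{x}^2)^N}\cdot\frac{\Phi(x)}{1+\Phi(x)^2}\le\frac{1}{2\,(1+\norm{x}^2)^N}, $$
whose right-hand side is integrable over $\R^n$ precisely because $2N>n$; hence $gH$ is integrable.

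The only real obstacle is the first step: $\cC$-class functions fail to be subanalytic exactly because of the logarithms, so one cannot simply feed $H$ itself into the denominator of $g$ and stay subanalytic, and the growth of $\log g_0$ must be absorbed into subanalytic data. Once $H$ is replaced by a subanalytic majorant $\Phi$, the decisive point is the choice of denominator $1+\Phi^2$, which simultaneously neutralises the blow-up of $\Phi$ at finite singular points and its growth at infinity, collapsing the integrability question to the single elementary polynomial-decay factor; in particular no cell decomposition, Łojasiewicz inequality, or resolution of singularities is needed here.
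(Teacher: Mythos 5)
Your proof is correct and follows essentially the same route as the paper's: first dominate the logarithmic factors by subanalytic expressions of the form $g_0+g_0^{-1}$ to get a subanalytic majorant of $\abs{H}$, then multiply by a bounded positive subanalytic reciprocal that simultaneously tames the majorant and supplies polynomial decay at infinity. The only difference is cosmetic (you use $1/((1+\norm{x}^2)^N(1+\Phi^2))$ where the paper uses piecewise-defined functions $\min(1,1/\abs{\Phi})\cdot\min(1,1/\abs{x}^{2n})$).
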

\begin{proof}
Let us first treat the case that $H$ is subanalytic.
Let $H_0$ be the function sending $x$ in $\R^n$ to $1/|H(x)|$  whenever $|H(x)|>1$ and to $1$ elsewhere. Let $H_1$ be the function sending $x$ to $1/|x|^{2n}$ whenever $|x|>1$ and to $1$ elsewhere.
Let $g$ be the product $H_0 H_1$. Then $g$ is clearly as desired.

Now let $H$ be of $\cC$-class. Write $H$ as $\sum_i (\prod_j \log g_{ij}(x)) f_i(x)$, for finitely many subanalytic functions $g_{ij}>0$ and $f_i$. Apply the previous construction to the subanalytic function $\sum_i ((1+|f_i|) \cdot \prod_j(1 + g_{ij}+1/g_{ij}))$, to construct $g$. Then one can check that $g$ is as desired, as $H(x)g(x)\le \min(c,c/|x|^{2n})$ for each $x$ and some constant $c$, by construction. \end{proof}

\begin{cor}\label{cor:int:cC}
Let $H:\R^{n+1}\to\R:(x,t)\mapsto H(x,t)$ be of $\cC$-class. Suppose that for each $x$ in $\R^n$, the function $t\mapsto H(x,t)$ is integrable over $\RR$. Then there exists a bounded subanalytic function $g: \R^n\to\R_{>0}$  such that $(x,t)\mapsto g(x)H(x,t)$ is integrable over $\R^{n+1}$.
\end{cor}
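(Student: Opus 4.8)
The plan is to reduce to Lemma~\ref{lem:H:int} by integrating out the variable $t$. I would set
\[
F(x) = \int_{\R} |H(x,t)|\,\mathrm{d}t,
\]
which is finite for every $x\in\R^n$ by hypothesis. If I can produce a \emph{non-negative $\cC$-class} function $\tilde F$ on $\R^n$ with $F(x)\le \tilde F(x)$ for all $x$, then Lemma~\ref{lem:H:int} applied to $\tilde F$ yields a bounded subanalytic $g\colon \R^n\to\R_{>0}$ with $g\tilde F\in L^1(\R^n)$, and Tonelli's theorem finishes the argument:
\[
\int_{\R^{n+1}} |g(x)H(x,t)|\,\mathrm{d}x\,\mathrm{d}t = \int_{\R^n} g(x)F(x)\,\mathrm{d}x \le \int_{\R^n} g(x)\tilde F(x)\,\mathrm{d}x < \infty,
\]
so that $(x,t)\mapsto g(x)H(x,t)$ is integrable over $\R^{n+1}$, as required. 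Note that $g$ then depends only on $x$, which is the point of the statement.

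The heart of the matter is to bound $F$ by a $\cC$-class function. The obstruction is that $|H|$ is in general \emph{not} of $\cC$-class: the absolute value creates corners along the locus $\{H=0\}$, which need not be subanalytic (for instance $|{\log x}-y|$ is not $\cC$-class), so I cannot simply invoke stability under integration for $|H|$. To circumvent this I would use a preparation / cell-decomposition for the constructible function $H$ as developed in \cite{CCMRS}: decompose $\R^{n+1}$ into finitely many subanalytic cells, cylindrical over the projection $\pi$ to $\R^n$, each with a subanalytic center $\theta(x)$, on which $H$ takes a prepared form
\[
H(x,t) = \sum_j c_j(x)\, y^{q_j}\,(\log y)^{s_j}, \qquad y = |t-\theta(x)|,
\]
with $c_j\in\cC(\pi(A))$ and the exponent pairs $(q_j,s_j)$ pairwise distinct (up to subanalytic units, which I absorb). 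On each such cell $|H|\le \sum_j |c_j(x)|\,y^{q_j}|\log y|^{s_j}\le \sum_j (1+c_j(x)^2)\,y^{q_j}|\log y|^{s_j}$, using the elementary bound $|c|\le 1+c^2$ to keep the coefficients of $\cC$-class; integrating each power--log monomial in $t$ over the cell fibre (an interval with subanalytic endpoints) produces, via the explicit antiderivative of $y^{q}(\log y)^{s}$, a $\cC$-class function of $x$. Summing over cells and terms yields the desired non-negative $\cC$-class majorant $\tilde F$.

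The main obstacle is to guarantee that this termwise majorant is \emph{finite}, i.e.\ that each monomial integral converges: a priori the crude triangle-inequality bound could diverge even though $F$ is finite, if there were cancellation among the terms of $H$. I would resolve this using the asymptotic independence of distinct power--log monomials built into the preparation. The finiteness of $\int_\R |H(x,t)|\,\mathrm{d}t$ forces the integral of the asymptotically dominant monomial to converge at each end of the cell fibre, and dominance then forces convergence of every individual monomial integral; splitting cells so that a single asymptotic regime governs each fibre endpoint makes this precise. Once finiteness is secured, $\tilde F$ is a genuine non-negative $\cC$-class function dominating $F$, and the reduction in the first paragraph concludes the proof.
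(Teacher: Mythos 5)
Your proposal is correct and follows essentially the same route as the paper: both reduce, via the Cluckers--Miller preparation of the constructible function $H$ into power--log monomials in $t$ with termwise integrability over the fibres (the paper simply cites Corollary 3.5 of \cite{CMillerIMRN} for exactly the point you argue by asymptotic dominance), and then conclude with Lemma \ref{lem:H:int} and Fubini--Tonelli. The only cosmetic difference is that you build one $\cC$-class majorant $\tilde F$ of $\int_\R|H(x,t)|\,\mathrm{d}t$ and apply Lemma \ref{lem:H:int} once, whereas the paper applies it to each prepared term $H_i$ (after arranging constant signs by partitioning instead of your bound $|c|\le 1+c^2$) and takes the product of the resulting functions $g_i$.
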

\begin{proof}
By Corollary 3.5 of \cite{CMillerIMRN}, we may reduce to the case that $H$ equals zero outside a cell
\blue{$A \subset \RR^{n}\times (0,1)$ or  $A \subset \RR^{n}\times (1,+\infty)$} and that on $A$, with $x$ in $\R^n$ and $t$ in $\R_{>0}$, one has a finite sum
$$
H(x,t) = \sum_{i}d_i(x) S_i(x,t) t^{\alpha_i}\blue{(\log (t))^{\ell_i}}
$$
for \blue{some integers $\ell_i\ge 0$,} some functions $d_i$ of $\cC$-class and depending only on $x$, some positive bounded subanalytic functions $S_i$, and such that for each $x$ and each $i$, the function $t\mapsto t^{\alpha_i}\blue{(\log(t))^{\ell_i}}$ is integrable over $A_x:=\{t\in\R\mid (x,t)\in A\}$. \blue{Note that the function $t\mapsto t^{\alpha_i}\blue{(\log(t))^{\ell_i}}$ has constant sign}. 
Clearly (up to partitioning subanalytically the space of the $x$-variables \blue{and slightly rewriting the above finite sum}) we may suppose for each $i$ that $d_i$ is either non-negative, or, non-positive on the projection of $A$ to $\R^n$. 
Consider $H_i(x) := \int_t d_i(x)S_i(x,t)|t|^{\alpha_i}\blue{(\log(t))^{\ell_i}}dt$. By Lemma \ref{lem:H:int} for $H_i$ for each $i$, we find a subanalytic $g_i(x)$ such that $H_i(x)g_i(x)$ is $L^1$ and $g_i(x)$ is bounded and everywhere positive. 
Now put $g ( x ) := \prod_i g_i(x).$
Then clearly
$$
gH : \R^{n+1}\to\R : (x,t)\mapsto g(x)H(x,t)
$$
  is $L^1$ by Fubini-Tonelli, as desired.
\end{proof}
\begin{remark}\label{rem:3.3:IMRN}
Note that in Definition 3.3 of \cite{CMillerIMRN}, the power series $F$ has to converge on an open neighborhood of the closure of the image of $\varphi$. This very word `closure' should be added in Definition 3.3 of \cite{CMillerIMRN}, in line with previous preparation results as in \cite{LR} \cite{Paru2}, \cite{MillerD}; this correction amends Definition 3.3 of \cite{CMillerIMRN}.
\end{remark}


\begin{prop}\label{prop:Cexp:int}
Let $f:\R^n\to\C$ be of $\cCexp$-class. Then there exists a bounded subanalytic function $g:\R^n\to\R_{>0}$ such that the product $gf$ is integrable over $\R^n$. Even more, one can write $f(x)$ as $\int_{t\in \RR} H(x,t) \exp(\mathrm{i}t) dt$ as in Lemma \ref{module} such that moreover $(x,t)\mapsto g(x)H(x,t)$ is integrable over $\R^{n+1}$.
\end{prop}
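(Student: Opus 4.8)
The plan is to reduce everything to the two integrability results already at hand, namely Lemma \ref{module} (which rewrites a $\cCexp$-class function as an oscillatory integral of a $\cC$-class kernel) and Corollary \ref{cor:int:cC} (which produces a taming factor for $\cC$-class kernels that are fibrewise $L^1$). The complex-valuedness of $f$ will not cause any trouble, since the oscillatory factor $\exp(\mathrm{i}t)$ has modulus $1$, so the same bounded subanalytic $g$ will serve both conclusions.

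First I would apply Lemma \ref{module} to $f$ to obtain a function $H\in\cC(\R^n\times\R)$ such that $t\mapsto H(x,t)$ is $L^1$ for each $x$ and
$$
f(x)=\int_{t\in\R}H(x,t)\exp(\mathrm{i}t)\,\mathrm{d}t.
$$
This $H$ satisfies exactly the hypotheses of Corollary \ref{cor:int:cC}, so applying that corollary yields a bounded subanalytic function $g:\R^n\to\R_{>0}$ such that $(x,t)\mapsto g(x)H(x,t)$ is integrable over $\R^{n+1}$. This is already the ``even more'' assertion, with $H$ and $g$ exactly as required in the statement.

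It then remains to deduce that $gf$ is integrable over $\R^n$. Here I would estimate pointwise, using $\lvert\exp(\mathrm{i}t)\rvert=1$ and the triangle inequality for integrals,
$$
\lvert g(x)f(x)\rvert=\Bigl\lvert\int_{t\in\R}g(x)H(x,t)\exp(\mathrm{i}t)\,\mathrm{d}t\Bigr\rvert\le\int_{t\in\R}\lvert g(x)H(x,t)\rvert\,\mathrm{d}t,
$$
and then integrate in $x$ and invoke Fubini--Tonelli on the nonnegative integrand $\lvert gH\rvert$:
$$
\int_{\R^n}\lvert g(x)f(x)\rvert\,\mathrm{d}x\le\int_{\R^n}\int_\R\lvert g(x)H(x,t)\rvert\,\mathrm{d}t\,\mathrm{d}x=\int_{\R^{n+1}}\lvert g(x)H(x,t)\rvert\,\mathrm{d}(x,t),
$$
which is finite by the previous step. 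Hence $gf\in L^1(\R^n)$, completing the proof.

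I do not expect any serious obstacle: all the analytic content has already been isolated in Lemma \ref{module} and Corollary \ref{cor:int:cC}, and the remaining argument is routine. The only points requiring a little care are to observe that a single factor $g$ yields both conclusions, and that the passage from joint integrability of $g(x)H(x,t)$ on $\R^{n+1}$ to integrability of $gf$ on $\R^n$ is legitimate precisely because $\lvert gH\rvert$ is already known to be jointly $L^1$, so Tonelli's theorem applies to the displayed estimate with no further justification.
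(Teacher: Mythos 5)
Your proposal is correct and follows exactly the paper's own route: apply Lemma \ref{module} to write $f$ as an oscillatory integral of a $\cC$-class kernel $H$, then apply Corollary \ref{cor:int:cC} to obtain the taming factor $g$. The paper leaves the final deduction that $gf\in L^1(\R^n)$ implicit (``to find $g$ as desired''), whereas you spell out the triangle-inequality and Tonelli step; that added detail is harmless and correct.
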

\begin{proof}
Write $f$ as $\int_{t\in \RR} H(x,t) \exp(\mathrm{i}t) dt$  as given by Lemma \ref{module}. Note that the function $H$ is of $\cC$-class on $\RR^{n+1}$ and that for each $x\in\R^n$, the map $t\mapsto H(x,t)$ is $L^1$.
Now apply Corollary \ref{cor:int:cC} to the function $H$ on $\R^{n+1}$ to find $g$ as desired.
\end{proof}


%
%
%
%
%

\section{D-modules and distributions}
Our main reference for the relation between D-modules and distributions in the analytic context is \cite{Bjork}, but we will use slightly different notation. We will now fix notations and briefly recall the results and definitions that we need.

\subsection{Basic notation}

We recall some rather standard notation.

\begin{notation}\label{not:left.right}  Consider a nonempty open set $U\subset \RR^n$.
	\begin{itemize}
		\item $C^0(U)$ stands for the algebra of continuous (complex valued) functions on $U$. For $r>0$ write $C^r(U)$ for the $r$ times continuously differentiable functions on $U$.

		\item $C^r_c(U)$ stands for the algebra of compactly supported functions in $C^r(U)$, for $r\ge 0$. Note that this means that the support (taken inside $U$) has to be compact.

		\item $\mathscr{D}'(U)$ stands for the complex vector space of 
			distributions on  $U$.

		\item $\mathscr{D}_c'(U)$ stands for the complex vector space of compactly supported distributions on $U$.


		\item $\cS(U)$ stands for the complex vector space of Schwartz functions on  $U$.

		\item $\cS'(U)$ stands for the complex vector space of tempered distributions on  $U$.






	\end{itemize}
\end{notation}

\subsection{Holonomic distributions}\label{subsec:holonomic}
\begin{notation} \label{not:hol}~
		\begin{itemize}
			\item For any integer $n>0$, denote by $\scD'_{\RR^n}$ the sheaf of distributions on $\RR^n$. In particular, for any nonempty open set $U \subset \RR^n$, one has  $\scD'_{\RR^n}(U)=\scD'(U)$.
			\item Denote by $\cO_{\CC^n}$ the sheaf of analytic functions on $\CC^n$.
			\item Denote by $\cD_{\CC^n}$ the sheaf of rings of differential operators on $\CC^n$ with analytic coefficients. For a nonempty open set $\Omega$ in $\CC^n$, write $\cD_{\CC^n \mid \Omega}$ for the restriction of the sheaf $\cD_{\CC^n}$ to $\Omega$.
%
			\item We consider $\scD'_{\RR^n}$ as a left $\cD_{\CC^n}$-module on $\CC^n$, as follows:
				\begin{itemize}
					\item For any open set $\Omega$ of $\CC^n$, $U:=\Omega \cap \RR^n$ is an open set of $\RR^n$ and we define $\scD'_{\RR^n}(\Omega)$ as $\scD'_{\RR^n}(U)$.
					\item The action ``.'' is defined from the following base cases: for any $u \in \scD'_{\RR^n}(U)$,
						for any derivation $\partial$, for any analytic function $a$ on $\Omega$, we have by definition
						$$\partial.u=\partial u\:\:\text{and}\:\:a.u=au,$$
						namely, for any $\varphi \in C_c^{\infty}(U)$,
						$$\la \partial.u,\varphi \ra = - \la u,\partial \varphi \ra \:\:\text{and}\:\:
						\la a.u,\varphi \ra = \la u,a\varphi \ra.$$
				\end{itemize}
		\end{itemize}
\end{notation}

The following should not be confused with the more restrictive notion of algebraic holonomicity.

\begin{definition}[cf. {\cite[page 298]{Bjork}}, {\cite{Andronikof}}, {\cite{Kashiwara-reg-hol-dist}}]\label{def:holonomic}
	For any open set $U \subset \RR^n$, say that a distribution $u \in \scD'(U)$ is holonomic if for each $x \in U$ there is an open neighborhood $\Omega$ of $x$ in $\CC^n$ and a coherent left ideal sheaf $\cI$ of  $\cD_{\CC^n \mid \Omega}$ such that $\cI.u=0$ on $\Omega$ and the quotient sheaf $$\cD_{\CC^n \mid \Omega}/\cI$$ is an analytic holonomic $\cD_{\CC^n \mid \Omega}$-module. (Alternatively, one could require that the dimension of the characteristic variety $\Char\!\left(\cD_{\CC^n \mid \Omega}/\cI\right)$ is $n$, see Remark \ref{rem:hol}.) %
\end{definition}

\begin{remark}\label{rem:hol}
	Recall that for any differential operator of order $m$
	$$P(x,\partial)=\sum_{\abs{\alpha}\leq m}a_{\alpha}(x)\partial^{\alpha}$$
and with $a_{\alpha}\not=0$ for at least one index tuple $\alpha$ with $\abs{\alpha}=m$,
	one defines its principal symbol as the homogeneous polynomial
	$$p_{m}(x,\xi)=\sum_{\abs{\alpha}=m}a_{\alpha}(x)\xi^{\alpha} \in \cO_{\CC^{n}}[\xi_1,\dots,\xi_n].$$
For a coherent left ideal sheaf $\cI$ of  $\cD_{\CC^n \mid \Omega}$ with an open set $\Omega\subset \CC^n$, the characteristic variety $\Char\!\left(\cD_{\CC^n \mid \Omega}/\cI\right) \subset \CC^{2n}$ is the locus of the common zeros of the principal symbols of the elements of $\cI(\Omega)$.
	For more details see for instance \cite[\S I.6, Definition 3.1.1]{Bjork} or \cite{Granger-Maisonobe}.
\end{remark}


%
\begin{prop}\label{prop:sum:holon}
Let $U \subset \RR^n$ be an open set. Let $V \subset \CC^n$ be an open set such that $V\cap \RR^n = U$.
The set of holonomic distributions on $U$ is a $\cD_{\CC^n}(V)$-module.\\
In particular :
\begin{itemize}
	\item If $u$ and $v$ are holonomic distributions, then so is $u+v$.
	\item For any differential operator $P \in \cD_{\CC^n}(V)$ and for any holonomic distribution $u$ on $U$, the distribution $P.u$ on $U$ is holonomic.
\end{itemize}
\end{prop}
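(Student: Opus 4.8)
The plan is to exploit the locality of Definition \ref{def:holonomic} together with the standard stability properties of analytic holonomic $\cD$-modules: a sub-module, a quotient, and a finite direct sum of holonomic $\cD_{\CC^n \mid \Omega}$-modules are again holonomic. The first two hold because the characteristic variety of a sub- or quotient module is contained in that of the ambient module, so its dimension stays equal to $n$; for a direct sum the characteristic variety is the union of the two, still of dimension $n$ (see \cite[\S I.6 and page 298]{Bjork}). I will also use repeatedly that $\cD_{\CC^n}$ is a coherent sheaf of rings, so that the kernel of any morphism of coherent $\cD_{\CC^n \mid \Omega}$-modules is coherent; in particular finitely generated sub-modules of coherent modules are coherent, and the annihilator of a generator of a coherent cyclic module is a coherent ideal sheaf.

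For the sum, fix $x\in U$ and choose, after shrinking to a common open neighborhood $\Omega$ of $x$ in $\CC^n$, coherent ideal sheaves $\cI_u,\cI_v\subset\cD_{\CC^n \mid \Omega}$ with $\cI_u.u=0$ and $\cI_v.v=0$ and with both $\cD_{\CC^n \mid \Omega}/\cI_u$ and $\cD_{\CC^n \mid \Omega}/\cI_v$ holonomic. Set $\cI:=\cI_u\cap\cI_v$, which is coherent as the kernel of the morphism of coherent modules $\cD_{\CC^n \mid \Omega}\to \cD_{\CC^n \mid \Omega}/\cI_u\oplus\cD_{\CC^n \mid \Omega}/\cI_v$. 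The induced map
$$
\cD_{\CC^n \mid \Omega}/\cI\longrightarrow \big(\cD_{\CC^n \mid \Omega}/\cI_u\big)\oplus\big(\cD_{\CC^n \mid \Omega}/\cI_v\big),\qquad \overline{P}\mapsto(\overline{P},\overline{P})
$$
is injective by construction of $\cI$, so its source is a sub-module of a holonomic module, hence holonomic. Moreover, any local section $P$ of $\cI$ satisfies $P.(u+v)=P.u+P.v=0$, so $\cI.(u+v)=0$. Thus $u+v$ is holonomic at $x$, and as $x\in U$ was arbitrary, $u+v$ is holonomic.

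For the action of a differential operator, fix $P\in\cD_{\CC^n}(V)$ and a holonomic $u$, fix $x\in U$, and on a neighborhood $\Omega\subset V$ of $x$ choose a coherent $\cI_u$ with $\cI_u.u=0$ and $\cD_{\CC^n \mid \Omega}/\cI_u$ holonomic. Since $\cI_u$ annihilates $u$, the assignment $\overline{Q}\mapsto Q.u$ defines a surjective morphism $\cD_{\CC^n \mid \Omega}/\cI_u\to \cD_{\CC^n \mid \Omega}\cdot u$, so the cyclic module $\cM:=\cD_{\CC^n \mid \Omega}\cdot u$ is a quotient of a holonomic module and is therefore holonomic, in particular coherent. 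Now $P.u$ is a section of $\cM$, and $\cN:=\cD_{\CC^n \mid \Omega}\cdot(P.u)$ is a finitely generated, hence coherent, sub-module of $\cM$; being a sub-module of the holonomic $\cM$, it is holonomic. As $\cN$ is cyclic, generated by $P.u$, its annihilator $\cJ$ is a coherent ideal sheaf with $\cD_{\CC^n \mid \Omega}/\cJ\cong\cN$ holonomic and $\cJ.(P.u)=0$. Hence $P.u$ is holonomic at $x$, and therefore on all of $U$.

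Combining the two cases, and noting that multiplication by a complex scalar is the special case $P\in\CC\subset\cD_{\CC^n}(V)$, shows that the holonomic distributions form a $\cD_{\CC^n}(V)$-sub-module of $\scD'(U)$, which is the assertion; the two displayed items are exactly the cases of addition and of the action of a single operator. The main point requiring care is coherence: one must ensure that the constructed ideal sheaves $\cI_u\cap\cI_v$ and $\mathrm{Ann}(P.u)$ are coherent, which is precisely where the coherence of the ring sheaf $\cD_{\CC^n}$ is used. Everything else is a formal consequence of the sub-module, quotient, and direct-sum stability of holonomicity.
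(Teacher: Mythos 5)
Your treatment of the sum is correct and is essentially the paper's own argument: $\cI_u\cap\cI_v$ is coherent as the kernel of $\cD_{\CC^n\mid\Omega}\to \cD_{\CC^n\mid\Omega}/\cI_u\oplus\cD_{\CC^n\mid\Omega}/\cI_v$, it annihilates $u+v$, and its quotient embeds into a holonomic module. The gap is in your argument for $P.u$. You form the cyclic module $\cM=\cD_{\CC^n\mid\Omega}\cdot u$ inside $\scD'$ and assert that, being a quotient of the holonomic module $\cD_{\CC^n\mid\Omega}/\cI_u$, it is ``holonomic, in particular coherent''. But the kernel of $\cD_{\CC^n\mid\Omega}/\cI_u\to\cM$ is $\mathrm{Ann}(u)/\cI_u$: you are quotienting by the \emph{full} annihilator of $u$, and in the analytic category there is no reason for $\mathrm{Ann}(u)$ to be a coherent (or even locally finitely generated) ideal sheaf; ``a quotient of a holonomic module is holonomic'' applies only to quotients by coherent submodules. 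This is precisely the subtlety the paper isolates: coherence of annihilator ideals is established only in the algebraic setting (Lemma \ref{lem:coherence}, via Lemma \ref{lem:van}), and Remark \ref{rem:holonomicity} explicitly notes that the argument breaks down analytically. Everything downstream in your proof ($\cN$ finitely generated hence coherent, $\cJ=\mathrm{Ann}(P.u)$ coherent) inherits this gap.

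The repair --- and the paper's actual argument --- is to never leave the coherent world: replace $\mathrm{Ann}(P.u)$ by the transporter ideal $(\cI_u:P)=\{Q\in\cD_{\CC^n\mid\Omega}\mid QP\in\cI_u\}$. It is coherent as the kernel of the morphism of coherent left modules $\cD_{\CC^n\mid\Omega}\to\cD_{\CC^n\mid\Omega}/\cI_u$ sending $Q$ to $QP\bmod\cI_u$; it annihilates $P.u$ because $QP\in\cI_u$ forces $Q.(P.u)=(QP).u=0$; and $\cD_{\CC^n\mid\Omega}/(\cI_u:P)$ embeds into $\cD_{\CC^n\mid\Omega}/\cI_u$ via $\overline{Q}\mapsto\overline{QP}$, hence is holonomic as a coherent submodule of a holonomic module. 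This produces a coherent ideal with holonomic quotient annihilating $P.u$, which is exactly what Definition \ref{def:holonomic} requires, without ever invoking coherence of a full annihilator.
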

\begin{proof}
	The proof follows the same ideas as for \cite[Proposition 7.4.7]{Bjork}. Assume that $u$ and $v$ are holonomic distributions on $U$. Then, by definition, for any $x \in U$, there is an open neighborhood $\Omega$ of $x$ in $\CC^n$ (that we can take contained in $V$), there are two coherent left ideal sheafs $\cI_u$ and $\cI_v$ of $\cD_{\CC^n \mid \Omega}$ with $\cI_u.u=0$ and $\cI_v.v=0$ on $\Omega$ and such that $\cD_{\CC^n \mid \Omega}/\cI_u$ and $\cD_{\CC^n \mid \Omega}/\cI_v$ are holonomic $\cD_{\CC^n \mid \Omega}$-modules.
	\begin{itemize}
		\item   Consider the coherent left ideal sheaf $\cI_u \cap \cI_v$. Then we have $(\cI_u \cap \cI_v).(u+v)=0$.
			Furthermore $\cD_{\CC^n \mid \Omega}/\cI_u \cap \cI_v$ is a submodule of
			$\cD_{\CC^n \mid \Omega}/\cI_u \oplus \cD_{\CC^n \mid \Omega}/\cI_v$ and hence is holonomic.
			Thus, $u+v$ is a holonomic distribution on $U$.
		\item 	Let $P$ be a partial differential operator. We consider the coherent left ideal sheaf
			\[(\cI_{u}:P)=\{Q \in \cD_{\CC^n \mid \Omega} \mid QP \in \cI_{u}\}.\]
			By construction we have $(\cI_{u}:P).(P.u)=0$.\\
			Furthermore $(\cI_{u}:P)$ is the kernel of the morphism
			\[\begin{array}{ccl}
					\cD_{\CC^n} & \to & \cD_{\CC^n}/\cI_{u} \\
					Q & \mapsto & QP \mod \cI_{u}
				\end{array}
			\]
			and thus $\cD_{\CC^n \mid \Omega}/(\cI_{u}:P)$ is isomorphic to a $\cD_{\CC^n \mid \Omega}$-submodule of $\cD_{\CC^n \mid \Omega}/\cI_{u}$ so it is again holonomic (for instance by \cite[Proposition 3.1.2]{HTT} or \cite[Remark 3.1.2]{Bjork}). It follows that $P.u$ is a holonomic distribution on $U$.
	\end{itemize}

\end{proof}


We will reduce the proof of Theorem \ref{mainthm} to the base case of Proposition \ref{cor:base-case-u} via a rather involved reduction argument. We first develop some basic results that will be useful for this reduction. Proposition \ref{cor:base-case-u} itself will be proved in the next section.



\begin{proposition}[Multiplication with an analytic unit]\label{prop:product:unit}
	Let $u$ be a distribution on an open set $U\subset \RR^n$. Let $v:U\to\RR$ be an analytic unit on the support of $u$ (namely, an analytic function whose restriction to an open neighborhood in $U$ of the support of $u$ is nonvanishing). Then
$u$ is holonomic if and only if $v.u$ is holonomic.
\end{proposition}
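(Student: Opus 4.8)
The plan is to use that holonomicity is a pointwise (local) condition, as in Definition \ref{def:holonomic}, together with Proposition \ref{prop:sum:holon}: since multiplication by an analytic function is an order-zero element of $\cD_{\CC^n}$, and since $v$ and $1/v$ are mutually inverse as such operators wherever $v$ does not vanish, applying Proposition \ref{prop:sum:holon} in both directions will yield the equivalence.

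More precisely, I would first record the following local equivalence. Fix $x\in U$, let $\Omega\subset\CC^n$ be an open neighborhood of $x$ with $U_0:=\Omega\cap\RR^n$, and let $\phi\in\cO_{\CC^n}(\Omega)$ be nonvanishing on $\Omega$, so that $\phi$ is a unit of $\cO_{\CC^n}(\Omega)$ with inverse $1/\phi\in\cO_{\CC^n}(\Omega)$; regarded as differential operators of order $0$, $\phi$ and $1/\phi$ are mutually inverse elements of $\cD_{\CC^n}(\Omega)$. For any distribution $w$ on $U_0$, the second item of Proposition \ref{prop:sum:holon} (applied with $V=\Omega$) then shows that $w$ is holonomic if and only if $\phi.w$ is: if $w$ is holonomic then so is $\phi.w$, and if $\phi.w$ is holonomic then so is $w=(1/\phi).(\phi.w)$. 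As holonomicity is defined point by point and hence may be checked on the members of any open cover, it suffices to prove the proposition locally around each $x\in U$.

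Now fix $x\in U$. If $x\notin\Supp(u)$, then on a neighborhood of $x$ both $u$ and $v.u$ vanish, because $\Supp(v.u)\subset\Supp(u)$; the zero distribution is holonomic (annihilated by all of $\cD_{\CC^n \mid \Omega}$, with holonomic quotient the zero module), so both are holonomic near $x$. If $x\in\Supp(u)$, then $x$ lies in the open neighborhood of $\Supp(u)$ on which $v$ is analytic and nonvanishing. The real-analytic function $v$ extends to a complex-analytic function $\tilde v$ on some open $\Omega\subset\CC^n$ containing $x$, and since nonvanishing is an open condition we may shrink $\Omega$ so that $\tilde v$ is nonvanishing on $\Omega$. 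Writing $U_0=\Omega\cap\RR^n$, on which $(v.u)|_{U_0}=\tilde v.(u|_{U_0})$, the local equivalence above (with $\phi=\tilde v$ and $w=u|_{U_0}$) shows that $u$ is holonomic near $x$ if and only if $v.u$ is. Combining the two cases over all $x\in U$ gives the claimed equivalence.

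The step requiring the most care is the passage, at points of the support, from the real-analytic unit $v$ on $U$ to an honest invertible element of the noncommutative ring $\cD_{\CC^n}(\Omega)$: one must complexify $v$ to $\tilde v$, shrink $\Omega$ to keep it nonvanishing, and observe that within the commutative subring $\cO_{\CC^n}(\Omega)\subset\cD_{\CC^n}(\Omega)$ the order-zero operators $\tilde v$ and $1/\tilde v$ are genuinely inverse to each other, which is exactly what lets Proposition \ref{prop:sum:holon} run in both directions. The remaining ingredients—locality of holonomicity, the inclusion $\Supp(v.u)\subset\Supp(u)$, and holonomicity of the zero distribution—are routine.
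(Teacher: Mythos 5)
Your proof is correct and follows essentially the same route as the paper, which simply cites Proposition \ref{prop:sum:holon} (together with the definition of the characteristic variety): the point in both cases is that an analytic unit is an invertible order-zero element of $\cD_{\CC^n}(\Omega)$ locally near the support, so Proposition \ref{prop:sum:holon} applies in both directions, while away from the support both distributions vanish. Your write-up merely makes explicit the localization, the complexification of $v$, and the trivial case outside $\Supp(u)$.
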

\begin{proof}
This follows from the definition of the characteristic variety and Proposition~\ref{prop:sum:holon}.
\end{proof}

\begin{proposition}[Multiplication with a coordinate function]\label{prop:mult}
	Let $u$ be a distribution on $\RR^n$ which is given by a locally $L^1$ function. Then,
$u$ is holonomic if and only if $x_1u$ is holonomic.
\end{proposition}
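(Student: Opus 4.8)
The equivalence is local and splits into two directions, only one of which is substantial. The direction ``$u$ holonomic $\Rightarrow x_1u$ holonomic'' is immediate and does not even use the $L^1$ hypothesis: multiplication by the coordinate $x_1$ is the action of an element of $\cD_{\CC^n}(V)$, so Proposition~\ref{prop:sum:holon} gives that $x_1u=x_1.u$ is holonomic whenever $u$ is. Thus the content of the statement is the converse, and this is where I expect all the difficulty and the role of local integrability to lie.

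For the converse I would argue locally around each point $p$. If $x_1(p)\neq 0$, then $x_1$ is an analytic unit near $p$, and Proposition~\ref{prop:product:unit} immediately yields that $u$ is holonomic near $p$ as soon as $x_1u$ is. Hence it remains to treat a point $p$ lying on the hypersurface $Y:=\{x_1=0\}$, which is the heart of the matter.

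Fix such a $p$ and a small neighborhood, and set $M_u:=\cD_{\CC^n}\cdot u$ and $M_w:=\cD_{\CC^n}\cdot(x_1u)$, regarded as $\cD_{\CC^n}$-submodules of $\mathscr{D}'$. By hypothesis $M_w$ is a quotient of some $\cD_{\CC^n}/\cI$ with $\cD_{\CC^n}/\cI$ holonomic, so $M_w$ is holonomic, and clearly $M_w\subseteq M_u$. Since the class $\bar u$ of $u$ generates $M_u/M_w$ and satisfies $x_1\bar u=0$ (because $x_1u\in M_w$), the quotient $M_u/M_w$ is supported on $Y$. As holonomic modules are stable under extensions and subquotients (cf.~\cite{Bjork}), $M_u$ is holonomic if and only if $M_u/M_w$ is holonomic; and by Kashiwara's equivalence for coherent modules supported on the smooth hypersurface $Y$, this in turn holds if and only if the associated coherent $\cD_Y$-module $N$ is holonomic. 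As an independent check that nothing goes wrong away from $Y$, one may also localize: $M_u[x_1^{-1}]=M_w[x_1^{-1}]$ is holonomic, so the only possible obstruction is carried by the part of $M_u$ supported on $Y$.

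The main obstacle is precisely to prove that $N$ is holonomic, and this is exactly where local integrability is essential. Without it, $u$ could differ from a holonomic distribution by a term $\delta(x_1)\otimes S$ with $S\in\mathscr{D}'(Y)$ arbitrary, in particular non-holonomic; such a term lies in the kernel of multiplication by $x_1$, so it is invisible to $x_1u$ yet destroys the holonomicity of $u$, and it contributes an infinite-dimensional, non-holonomic piece to $N$. The plan is to exploit that an $L^1_{\mathrm{loc}}$ function charges no mass on the measure-zero set $Y$ — made quantitative by approximating $u$ by $\psi_\varepsilon u$, where $\psi_\varepsilon$ is a cutoff vanishing near $Y$ — in order to show that $M_u$ contains no such spurious $Y$-supported distributions, so that the only sections of $M_u$ supported on $Y$ are the ``boundary residues'' forced by the holonomic restriction $u|_{U\smallsetminus Y}$. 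Identifying these residues, for instance through the Kashiwara--Malgrange $V$-filtration and the vanishing-cycle description of the holonomic module $M_w$, and checking that they assemble into a holonomic $\cD_Y$-module, is what I expect to be the genuine technical crux.
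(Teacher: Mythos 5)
Your easy direction and the localization away from $\{x_1=0\}$ are fine and match the paper. But for the converse your argument has a genuine gap: the entire substance of the statement is the claim that the $Y$-supported piece $M_u/M_w$ is a holonomic $\cD_Y$-module, and you do not prove it — you explicitly defer it as ``what I expect to be the genuine technical crux,'' offering only the (correct) heuristic that an $L^1_{\mathrm{loc}}$ function puts no mass on $Y$. That heuristic does not by itself produce the needed coherence or the finiteness of the characteristic variety of $M_u/M_w$; indeed, before you can even invoke Kashiwara's equivalence or ``stability under extensions'' you would need $M_u=\cD_{\CC^n}\cdot u$ to be a \emph{coherent} $\cD$-module, which is not automatic in the analytic category — the paper explicitly warns (Remark \ref{rem:holonomicity}) that the coherence argument for $\cD\cdot u$ only works algebraically. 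So the proposal is a plan, not a proof.

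The paper closes this gap by a completely different and self-contained device: starting from the holonomic distribution $f=x_1^2u$, it forms the family $f_\lambda=|x_1^2|^\lambda f$ for $\Re(\lambda)\ge -1$, notes that $f_{-1}=u$ \emph{precisely because} $u$ is locally $L^1$ (so the set $\{x_1=0\}$ is negligible), and that $\lambda\mapsto\la f_\lambda,\varphi\ra$ is continuous up to $\Re(\lambda)=-1$ and analytic in the open half-plane. Björk's meromorphic-continuation theorem for $|g|^\lambda$ times a holonomic distribution then says the family continues meromorphically to all of $\CC$ with every Laurent coefficient holonomic; by uniqueness the continuation agrees with the honest family on $\Re(\lambda)\ge -1$, hence has no pole at $\lambda=-1$, and its value there is $u$. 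This is exactly the residue/vanishing-cycle information you were hoping to extract, but packaged in a theorem that is already available, so no Kashiwara--Malgrange analysis needs to be carried out by hand. If you want to salvage your route, you would have to actually establish coherence and holonomicity of $M_u/M_w$, which is essentially equivalent in difficulty to the Bernstein--Sato input the paper cites.
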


\begin{proof}	
	Assume first that $u$ is holonomic. Then, by Proposition \ref{prop:sum:holon}, $x_1u$ is also holonomic.\\
	Assume now that $x_1u$ is holonomic. Let $f=x_1^2u$. It follows from the previous case that, $f$ is a holonomic distribution.
	We consider
	$$\Omega=\{\lambda \in \CC\mid \Re(\lambda)>-1\}\:\text{and}\:\bar{\Omega}=\{\lambda \in \CC\mid \Re(\lambda)\geq-1\},$$
where $\Re$ takes the real part of a complex number.  	

	For any $\lambda\in \bar{\Omega}$, the function $f_\lambda:=|x_1^2|^\lambda  f $ is a locally $L^1$ function and induces a distribution also denoted by $f_\lambda$. Remark that $f_{-1}=u$. For any test function $\varphi \in C_{c}^{\infty}(\RR^n)$, the application
	$\lambda \mapsto \la f_{\lambda},\varphi \ra$
	is continuous on $\bar{\Omega}$ and analytic on $\Omega$. We consider the assignment $F:\lambda \in \bar{\Omega} \mapsto f_\lambda$. By \cite[Theorem 7.6.1]{Bjork}\footnote{\cite[Theorem 7.6.1]{Bjork} proves this fact for regular holonomic distributions on complex analytic manifolds. However its proof does not use the regularity assumption and works in the real context in the same way as in the complex one.}, the assignment $F_{\mid \Omega}$ can be meromorphically continued from $\Omega$ to $\C$. By the uniqueness theorem this continuation coincides with $F$ on $\bar{\Omega}$. By \cite[Theorem 7.6.1]{Bjork} all the coefficients of the Laurent series of this meromorphic continuation in any $\lambda$ are holonomic. In particular $F$ is defined in $-1$ and its zero coefficient in its Laurent series expansion is $F(-1)=u$, implying that $u$ is holonomic. 
\end{proof}

\begin{proposition}[Multiplication with half space]\label{prop:mult:half}
Let $u$ be a distribution on $\RR^n$ which is given by a locally $L^1$ function. Let $g$ be the characteristic function of $\R_{\ge 0}\times \R^{n-1}$. If
$u$ is holonomic then $gu$ is holonomic.
\end{proposition}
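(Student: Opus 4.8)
The plan is to reduce everything to Proposition \ref{prop:mult}, by showing that $x_1\cdot(gu)$ is holonomic and then inverting the multiplication-by-$x_1$ step. First I would note that since $u$ is given by a locally $L^1$ function and $g$ is bounded measurable ($0\le g\le 1$), the product $gu$ is again given by a locally $L^1$ function. Hence Proposition \ref{prop:mult} applies to $gu$ and tells us that $gu$ is holonomic if and only if $x_1(gu)$ is holonomic. Now $x_1 g$ agrees almost everywhere with the positive part $x_{1,+}:=\max(x_1,0)$, so $x_1(gu)=x_{1,+}\,u$ as distributions, and it suffices to prove that $x_{1,+}\,u$ is holonomic.

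Next I would use the decomposition $x_{1,+}=\tfrac12\bigl(x_1+\abs{x_1}\bigr)$, which reduces the problem to the holonomicity of the two summands $x_1 u$ and $\abs{x_1}\,u$. The first is immediate from Proposition \ref{prop:sum:holon}: multiplication by the analytic function $x_1$ (an order-zero operator in $\cD_{\CC^n}$) preserves holonomicity, and $u$ is holonomic by hypothesis. For the second summand I would run the same meromorphic continuation argument as in the proof of Proposition \ref{prop:mult}. Consider the assignment $F\colon\lambda\mapsto (x_1^2)^{\lambda}u$, which for $\Re(\lambda)\ge 0$ is a locally $L^1$ function, hence a distribution, and which depends analytically on $\lambda$ for $\Re(\lambda)>0$. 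By \cite[Theorem 7.6.1]{Bjork} (applied exactly as in Proposition \ref{prop:mult}, with the analytic function $x_1^2$ and the holonomic distribution $u$), the restriction of $F$ to $\{\Re(\lambda)>0\}$ extends meromorphically to $\CC$ and all the coefficients of its Laurent expansion at any point are holonomic. Since $\lambda=\tfrac12$ is a regular point at which $F(\tfrac12)=(x_1^2)^{1/2}u=\abs{x_1}\,u$ (a genuine locally $L^1$ function, so no pole occurs there), its zeroth Laurent coefficient, namely $\abs{x_1}\,u$ itself, is holonomic.

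Combining the two summands via Proposition \ref{prop:sum:holon}, the distribution $x_{1,+}\,u=\tfrac12\bigl(x_1 u+\abs{x_1}\,u\bigr)$ is holonomic. Therefore $x_1(gu)=x_{1,+}\,u$ is holonomic, and Proposition \ref{prop:mult} finally yields that $gu$ is holonomic, as claimed.

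The one genuinely nontrivial ingredient is the holonomicity of $\abs{x_1}\,u$, which requires the Bernstein--Sato type meromorphic continuation of $(x_1^2)^{\lambda}u$ together with Björk's theorem; everything else is formal bookkeeping with the already-established closure properties. The point of writing $x_{1,+}=\tfrac12(x_1+\abs{x_1})$ is precisely that it lets me work with the two-sided power $(x_1^2)^{\lambda}$, for which the cited continuation result is available verbatim from Proposition \ref{prop:mult}, rather than with a one-sided power $x_{1,+}^{\lambda}$, for which \cite[Theorem 7.6.1]{Bjork} is not stated directly.
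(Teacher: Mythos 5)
Your proof is correct, but it takes a different route from the paper's. The paper attacks $gu$ directly: it sets $h(x)=x$ for $x>0$ and $0$ otherwise, considers the family $h(x_1)^\lambda u$, and invokes Bj\"ork's Theorem 7.6.1 for this \emph{one-sided} power, evaluating at $\lambda=0$ to recover $gu$; since Bj\"ork states the theorem only for $\abs{\cdot}^\lambda$, the paper must add a footnote asserting that the same argument goes through for $h$. Your argument instead first multiplies by $x_1$ (legitimate, since $gu$ is again locally $L^1$, so the hard ``only if'' direction of Proposition \ref{prop:mult} lets you divide back out at the end), reducing to the holonomicity of $x_{1,+}u=\tfrac12(x_1u+\abs{x_1}u)$; the term $x_1u$ is free by Proposition \ref{prop:sum:holon}, and $\abs{x_1}u$ is the value at the regular point $\lambda=\tfrac12$ of the \emph{two-sided} family $(x_1^2)^\lambda u$, which is exactly the form of the family already used in the proof of Proposition \ref{prop:mult}, so the cited continuation theorem applies verbatim. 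What your route buys is that it never needs the one-sided extension of Bj\"ork's theorem (the footnote-level claim in the paper); what it costs is an extra pass through the meromorphic-continuation machinery hidden inside Proposition \ref{prop:mult}, so the same caveat about dropping the regularity hypothesis in Bj\"ork's theorem is still implicitly in play. Both arguments are sound.
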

\begin{proof}
The proof is similar to the proof of the previous proposition. We will continue to use the notation $\Omega$.
		Let $h:\R\to \R$ be given by $h(x)=x$ for positive $x$ and $0$ otherwise.
		For $\lambda\in \Omega$,  Let $g_\lambda:=h(x_1)^\lambda u$.
		As in \cite[Theorem 7.6.1]{Bjork}\footnote{\cite[Theorem 7.6.1]{Bjork} gives the result for  $h$ replaced with $|\cdot|$; however, the same argument is still valid for our case.} the distribution $g_\lambda$  is holonomic for any $\lambda\in  \Omega$, and in particular for $\lambda = 0$ as required. 
\end{proof}

\begin{theorem}[Push-forward]\label{thm:push}\label{prop:push:holon}
	Let $\phi:U\subset \R^m\to V\subset \R^n$ be an analytic map for some open subsets $U\subset \R^m$ and $V\subset \R^n$. Let $u$ be a holonomic distribution on $U$. Assume that $\phi$ is proper on the support of $u$. Then the distribution $\phi_*(u)$ on $V$  is holonomic.
\end{theorem}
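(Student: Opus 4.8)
The plan is to factor $\phi$ through its graph, reducing the statement to two standard building blocks: a closed analytic embedding and a linear projection, for each of which I will exhibit the required holonomic annihilator. Write $\iota_\phi\colon U\to U\times V$, $x\mapsto(x,\phi(x))$, for the graph embedding and $p\colon U\times V\to V$ for the projection, so that $\phi=p\circ\iota_\phi$. Both maps are analytic, $\iota_\phi$ is a closed embedding, and the hypothesis that $\phi$ is proper on $\Supp(u)$ is equivalent to $p$ being proper on $\Supp\big((\iota_\phi)_*u\big)=\iota_\phi(\Supp u)$. Since the push-forward of distributions is functorial under composition, $\phi_*(u)=p_*\big((\iota_\phi)_*u\big)$, so it suffices to treat each factor.

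For the closed embedding I would argue locally: after an analytic change of coordinates every closed analytic embedding becomes the standard inclusion $\R^m\hookrightarrow\R^m\times\R^k$, $x\mapsto(x,0)$, and holonomicity is preserved under such coordinate changes. If $\cI\subset\cD_{\CC^m}$ is a coherent holonomic ideal with $\cI.u=0$, then the distribution $(\iota_\phi)_*u$, given on test functions by $\langle (\iota_\phi)_*u,f\rangle=\langle u,f(\cdot,0)\rangle$, is annihilated both by every $P\in\cI$ (which involves only the $x,\partial_x$ and commutes with restriction to $y=0$) and by the transverse coordinate functions $y_1,\dots,y_k$. Hence it is killed by the coherent left ideal $\cJ$ generated by $\cI$ together with $y_1,\dots,y_k$, and $\cD_{\CC^{m+k}}/\cJ$ is the external tensor product of the holonomic module $\cD_{\CC^m}/\cI$ with Kashiwara's module $\cD_{\CC^k}/(y_1,\dots,y_k)$ of the delta distribution at the origin; it is holonomic, its characteristic variety having dimension $m+k$. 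This is the distributional form of Kashiwara's equivalence for closed immersions, for which I would cite \cite{Bjork}.

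The projection $p$, which amounts to integration along the fibers, is the main obstacle. Here I would invoke Kashiwara's direct image theorem in its analytic form, as in \cite{Bjork}: the direct image of a holonomic $\cD$-module under a morphism proper on the support of the module is again holonomic. To connect this to distributions, note that $(\iota_\phi)_*u$ defines a $\cD$-linear map from the holonomic module $\cM$ on $U\times V$ it generates to the distribution module $\scD'_{U\times V}$. Applying the $\cD$-module direct image functor $\int_p$ and composing with the canonical $\cD_V$-linear trace (fiber-integration) map $\int_p\scD'_{U\times V}\to\scD'_V$, which is defined precisely because $\Supp\big((\iota_\phi)_*u\big)$ is proper over $V$, realizes $\phi_*(u)$ as the image of the generator under a $\cD_V$-linear map from a cohomology module of $\int_p\cM$. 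Thus the $\cD_V$-module generated by $\phi_*(u)$ is a quotient of that cohomology module, which is holonomic by Kashiwara's theorem; since subquotients of holonomic modules are holonomic (the underlying fact behind Proposition \ref{prop:sum:holon}), the distribution $\phi_*(u)$ is holonomic in the sense of Definition \ref{def:holonomic}.

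The delicate points I expect to confront are the $\cD_V$-linearity of the trace map and the verification that the properness hypothesis supplies exactly the support condition needed both to define $p_*$ on the relevant distribution and to apply the direct image theorem: without properness the trace map is undefined and the conclusion genuinely fails. If one prefers to avoid the graph factorization, the same argument applies verbatim to $\phi$ itself once one cites the general (not just projection) form of the analytic direct image theorem; the factorization is included only to make the closed-immersion input elementary and the trace map concrete as honest integration along affine fibers.
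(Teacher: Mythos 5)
Your proposal is correct and follows essentially the same route as the paper, which simply cites Bj\"ork's Theorem 7.4.11 (noting that its proof needs neither regularity nor the complex setting); your graph factorization, the Kashiwara-equivalence treatment of the closed immersion, and the appeal to the analytic direct image theorem for the proper projection are precisely the standard argument underlying that citation. The only difference is that you unpack the details the paper delegates to the reference.
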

\begin{proof} In \cite[Theorem 7.4.11, \S 7.4.12]{Bjork}, Bj\"{o}rk proves this fact for regular holonomic distributions on complex analytic manifolds. However its proof does not use the regularity assumption and works in the real context in the same way as in the complex one.
\end{proof}

\begin{proposition}[Analytic transformations]\label{prop:an}
	Let $u$ be a distribution on an open set $V\subset \RR^n$. Let $f: V \to\RR^n$ be a map such that the restriction of $f$ to an open set $U$ containing the support of $u$ is proper and bi-analytic. Then,
$u$ is holonomic if and only if the push-forward $f_*(u)$ is holonomic.
\end{proposition}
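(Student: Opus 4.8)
The plan is to obtain both implications from the push-forward theorem (Theorem~\ref{thm:push}), applying it once to $f$ and once to the inverse of $f$.

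First I would unwind the bi-analytic hypothesis. Since $f|_U$ is bi-analytic, the image $W:=f(U)$ is an open subset of $\RR^n$, the map $f|_U\colon U\to W$ is an analytic isomorphism, and its inverse $g:=(f|_U)^{-1}\colon W\to U$ is analytic. Because $\Supp(u)\subset U$ and $f|_U$ is proper, the push-forward $f_*(u)$ is well defined, equals $(f|_U)_*(u)$, and is supported on the closed subset $f(\Supp(u))\subset W$. I would also record the two standard localization facts about holonomicity used throughout: holonomicity is a local property, so a distribution is holonomic iff its restrictions to the members of any open cover are holonomic, and a distribution is trivially holonomic on any open set where it vanishes. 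Combined with $\Supp(u)\subset U$, these give that $u$ is holonomic on $V$ iff $u|_U$ is holonomic on $U$, and that $f_*(u)$ is holonomic on $\RR^n$ iff its restriction to $W$ is holonomic on $W$.

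For the forward implication I would assume $u$ holonomic, so that $u|_U$ is holonomic on $U$. The map $f|_U\colon U\to\RR^n$ is analytic and proper on $\Supp(u)$, so Theorem~\ref{thm:push} yields that $(f|_U)_*(u)=f_*(u)$ is holonomic. For the converse I would use the inverse: assuming $f_*(u)$ holonomic, hence holonomic on $W$, the map $g\colon W\to U\subset\RR^n$ is analytic, and being a homeomorphism onto $U$ it is proper, in particular proper on the closed subset $\Supp(f_*(u))=f(\Supp(u))$. Theorem~\ref{thm:push} then shows that $g_*(f_*(u))$ is holonomic on $U$; since $g\circ f|_U=\mathrm{id}_U$, functoriality of the push-forward gives $g_*(f_*(u))=u$, so $u$ is holonomic on $U$ and therefore on $V$.

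The steps that are not purely formal are that the inverse of a bi-analytic map is proper, which is immediate since it is a homeomorphism, and the functoriality identity $g_*(f_*(u))=u$; I expect the latter to be the single step needing a short verification. Concretely, one must check that the composite of the two push-forwards coincides with the push-forward along the composite map, which holds once the properness of $f|_U$ on $\Supp(u)$ and of $g$ on $\Supp(f_*(u))$ has been recorded, so that $\langle g_*(f_*(u)),\varphi\rangle=\langle u,\varphi\circ g\circ f|_U\rangle=\langle u,\varphi\rangle$ for every test function $\varphi$, using that $f|_U$ and $g$ are analytic diffeomorphisms and hence carry compactly supported test functions to compactly supported test functions.
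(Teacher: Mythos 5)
Your proposal is correct and follows essentially the same route as the paper: both apply Theorem~\ref{thm:push} once to $f$ and once to the inverse $(f|_U)^{-1}$, using the identity $u=(f^{-1})_*(f_*(u))$ and the properness of both maps on the relevant supports. The only difference is that you spell out the localization and functoriality checks that the paper leaves implicit.
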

\begin{proof}
	By assumption $f:U \to U':= f(U)$ is a proper analytic transformation where $U$ is an open set of $\RR^n$ containing the support of $u$. The set $U'$ is an open set containing the support of  $f_*(u)$, $f^{-1}:U'\to U$ is proper and bi-analytic on $U'$ and we have $u=(f^{-1})_*(f_*(u))$, so the equivalence follows from Theorem \ref{thm:push} applied to $u$ and $f_*u$ with $f$ and $f^{-1}$.  
\end{proof}

\begin{proposition}[Cartesian product situation]\label{prop:product-situation}
Let $u_j$ be a distribution on an open set $U_j\subset \RR^{n_j}$ for $j=1,2$. Suppose that each $u_j$ is holonomic. Then the distribution $u_1\otimes u_2$ on $U_1\times U_2$ is also holonomic.
Here, $u_1\otimes u_2$ is defined as
\begin{equation*}\label{def:prodtensdist}
	\la u_1\otimes u_2, (x_1,x_2)\mapsto \varphi(x_1,x_2) \ra  =  \la u_1,x_1 \mapsto \la u_2,x_2 \mapsto \varphi(x_1,x_2)\ra \ra.
\end{equation*}
for any $\varphi \in C_{c}^{\infty}(U_1\times U_2)$.
\end{proposition}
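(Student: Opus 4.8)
The plan is to reduce the statement to the corresponding local fact about $\cD$-modules, namely that the external tensor product of two holonomic modules is again holonomic. Fix a point $(x_1,x_2)\in U_1\times U_2$. Since each $u_j$ is holonomic, there is, for $j=1,2$, an open neighborhood $\Omega_j$ of $x_j$ in $\CC^{n_j}$ and a coherent left ideal sheaf $\cI_j$ of $\cD_{\CC^{n_j}\mid \Omega_j}$ with $\cI_j.u_j=0$ on $\Omega_j$ and such that the quotient $M_j:=\cD_{\CC^{n_j}\mid \Omega_j}/\cI_j$ is holonomic. I would then set $\Omega:=\Omega_1\times\Omega_2\subset \CC^{n}$, where $n=n_1+n_2$; this is an open neighborhood of $(x_1,x_2)$ in $\CC^n$ whose intersection with $\RR^n$ is $(\Omega_1\cap\RR^{n_1})\times(\Omega_2\cap\RR^{n_2})\subseteq U_1\times U_2$, so that $u_1\otimes u_2$ is defined there.

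Next I would build the annihilating ideal. Viewing $\cD_{\CC^{n_1}\mid\Omega_1}$ and $\cD_{\CC^{n_2}\mid\Omega_2}$ as the subsheaves of $\cD_{\CC^n\mid\Omega}$ of operators involving only the first, respectively only the second, block of variables, the defining formula for $u_1\otimes u_2$ yields the two identities $P.(u_1\otimes u_2)=(P.u_1)\otimes u_2$ for $P$ involving only the first block of variables, and $Q.(u_1\otimes u_2)=u_1\otimes(Q.u_2)$ for $Q$ involving only the second block; this is a routine check against a test function $\varphi\in C_c^\infty(U_1\times U_2)$, using that the partial actions commute with the iterated pairing defining the tensor product. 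Let $\cI$ be the left ideal sheaf of $\cD_{\CC^n\mid\Omega}$ generated by the images of $\cI_1$ and $\cI_2$. By the two identities above, $\cI.(u_1\otimes u_2)=0$ on $\Omega$.

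It then remains to prove that the quotient $\cD_{\CC^n\mid\Omega}/\cI$ is holonomic, which is the essential content of the proposition. I would identify this quotient with the external tensor product $M$ of $M_1$ and $M_2$, which is coherent because $M_1$ and $M_2$ are. Under the canonical identification $T^*\CC^n \cong T^*\CC^{n_1}\times T^*\CC^{n_2}$, the characteristic variety of the external tensor product is the product of the two characteristic varieties, $\Char(M)=\Char(M_1)\times\Char(M_2)$, whence $\dim\Char(M)=\dim\Char(M_1)+\dim\Char(M_2)=n_1+n_2=n$. By the dimension criterion recalled in Remark \ref{rem:hol}, $M$ is holonomic, and therefore $u_1\otimes u_2$ is a holonomic distribution. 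The main obstacle is precisely this last step: verifying coherence of $\cI$ together with the computation of the characteristic variety of the external product. Both are standard in $\cD$-module theory (see e.g.\ \cite{HTT}), but since the notion of holonomicity used here is the real-analytic one of Definition \ref{def:holonomic}, I would take care to check that the relevant $\cD$-module statements apply verbatim in the analytic setting, exactly as the cited facts from \cite{Bjork} and \cite{HTT} are used in the proof of Proposition \ref{prop:sum:holon}.
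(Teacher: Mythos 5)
Your proof is correct and follows essentially the same route as the paper: both form the ideal generated by $\cI_1$ and $\cI_2$ on $\Omega_1\times\Omega_2$, check it annihilates $u_1\otimes u_2$ using the independence of the two blocks of variables, and identify the quotient with the external product $M_1\boxtimes M_2$, whose holonomicity the paper cites from \cite[Proposition 3.2.2]{HTT} while you reprove it via the product formula for characteristic varieties. The only cosmetic difference is that last step, which is just the standard proof of the cited fact.
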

\begin{proof}
For any $j \in \{1,2\}$, the distribution $u_j$ is holonomic on $U_j$, and thus, for any $x_j \in U_j$, there is an open neighborhood $\Omega_j$ of $x_j$ in $\CC^{n_j}$ and a coherent left ideal sheaf $\cI_j$ of  $\cD_{\CC^{n_j} \mid \Omega_j}$ such that $\cI_j.u_j=0$ on $\Omega_j$ and $\cD_{\CC^{n_j} \mid \Omega_j}/\cI_j$ is an analytic holonomic $\cD_{\CC^{n_j} \mid \Omega_j}$-module.\\
		We denote by $\cI_1+\cI_2$ the coherent left ideal sheaf of $\cD_{\CC^{n_1+n_2} \mid \Omega_1 \times \Omega_2}$ generated by $\cI_1$ and $\cI_2$. It follows from the definition of the tensor product (and the independence between variables of $\CC^{n_1}$ and those of $\CC^{n_2}$) that $(\cI_1+\cI_2).(u_1\otimes u_2)=0$.\\
		Furthermore, similarly to \cite[\S 7.5.2]{Bjork}, we have the isomorphism
		$$\cD_{\CC^{n_1+n_2} \mid \Omega_1 \times \Omega_2}/(\cI_1+\cI_2)\simeq
		\cD_{\CC^{n_1} \mid \Omega_1}/\cI_1 \boxtimes
		\cD_{\CC^{n_2} \mid \Omega_2}/\cI_2
		$$
		where $\boxtimes$ denotes the external product of $\cD$-modules (see for instance \cite[\S 2.4.4]{Bjork} or \cite{HTT}), which is holonomic as soon as each factor is holonomic (\cite[Proposition 3.2.2]{HTT}).
It follows that $u_1 \otimes u_2$ is holonomic.
\end{proof}

\begin{prop}
\label{prop:power-map:holon}
Suppose that a distribution $u$ on $\RR^n$ is locally integrable, and that the support of $u$ is included in $(\RR_{\ge 0})^n$.
Let $k>0$ be an integer and consider the map $f:\RR^n\to\RR^n:x\mapsto (x_1^k,x_2,\ldots,x_n)$. Then there exists a distribution $v$ with support of $v$ included in $(\RR_{\ge 0})^n$ and such that $f_*(v)=u$. Furthermore, the distribution $v$ is represented by the locally integrable function which sends $y=(y_1,\dots,y_n)$ in  $(\RR_{\geq 0})^n$ to
	\begin{equation}\label{push:xk} ky_1^{k-1}u(y_1^k,y_2,\dots,y_n)
\end{equation}
 and which is zero elsewhere.
\end{prop}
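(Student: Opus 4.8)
The plan is to take $v$ to be the distribution defined by the explicit function appearing in \eqref{push:xk}, and to verify by a single change of variables in the first coordinate that its push-forward equals $u$. Concretely, I would set
\[
v_0(y) = \begin{cases} k y_1^{k-1}\, u(y_1^k, y_2, \dots, y_n) & \text{if } y \in (\RR_{\ge 0})^n, \\ 0 & \text{otherwise,}\end{cases}
\]
and first check that $v_0$ is locally integrable on $\RR^n$, so that it defines a distribution $v$ whose support is contained in $(\RR_{\ge 0})^n$. Since $s\mapsto s^k$ is a homeomorphism of $[0,\infty)$ which is a diffeomorphism on $(0,\infty)$ with derivative $k s^{k-1}$, the precomposition of the a.e.-defined function $u$ with $y\mapsto(y_1^k,y_2,\dots,y_n)$ is well-defined a.e. (null sets pull back to null sets off the negligible locus $y_1=0$), so $v_0$ makes sense as a measurable function.

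For local integrability I would fix a compact $K\subset\RR^n$ and apply the substitution $x_1=y_1^k$, $dx_1 = k y_1^{k-1}\,dy_1$, in the first variable (keeping the others fixed), giving
\[
\int_{K\cap(\RR_{\ge 0})^n} k y_1^{k-1}\,\abs{u(y_1^k,y_2,\dots,y_n)}\,dy \;=\; \int_{f(K\cap(\RR_{\ge 0})^n)}\abs{u(x)}\,dx \;\le\; \int_{f(K)}\abs{u(x)}\,dx \;<\;\infty,
\]
finite because $f(K)$ is compact and $u$ is locally integrable. Next, the map $s\mapsto s^k$ is proper on $[0,\infty)$, so $f$ is proper on $(\RR_{\ge 0})^n$ and in particular on $\Supp(v)$, and the push-forward $f_*(v)$ is well-defined. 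For any $\varphi\in C_c^\infty(\RR^n)$ the defining identity gives
\[
\la f_*(v),\varphi\ra = \la v,\varphi\circ f\ra = \int_{(\RR_{\ge 0})^n} k y_1^{k-1}\, u(y_1^k,y_2,\dots,y_n)\,\varphi(y_1^k,y_2,\dots,y_n)\,dy,
\]
and the same substitution $x_1=y_1^k$ turns the right-hand side into $\int_{(\RR_{\ge 0})^n} u(x)\varphi(x)\,dx$, which equals $\la u,\varphi\ra$ since $\Supp(u)\subset(\RR_{\ge 0})^n$. Hence $f_*(v)=u$, as claimed.

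The argument is essentially one change of variables, so I do not anticipate a serious obstacle. The only points deserving care are the well-definedness of the composite $u(y_1^k,\dots)$ as a locally integrable function (handled by the fact that the substitution is a diffeomorphism off the measure-zero locus $y_1=0$, where the weight $k y_1^{k-1}$ vanishes in any case) and the observation that the exact matching between the Jacobian $k y_1^{k-1}$ of the substitution and the weight in \eqref{push:xk} is precisely what makes both the integrability estimate and the push-forward identity come out clean; the restriction of all integrals to $(\RR_{\ge 0})^n$ is justified throughout by the support hypotheses on $u$ and $v$.
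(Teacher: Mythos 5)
Your proposal is correct and follows essentially the same route as the paper: define $v$ by the explicit weighted composite in \eqref{push:xk} and verify $f_*(v)=u$ via the substitution $x_1=y_1^k$ against a test function. The only addition is your explicit check of local integrability of $v_0$, which the paper leaves implicit but which follows from the same change of variables.
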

\begin{proof}
	Let $v$ be the distribution on $\R^n$ given by the locally integrable function which equals (\ref{push:xk}) on $(\RR_{\geq 0})^n$ and zero elsewhere.
Thus, for any test function $\varphi \in C_{c}^{\infty}(\RR^n)$ we have
$$ \la v,\varphi \ra = \int_{(\RR_{\geq 0})^n}ky_1^{k-1}u(y_1^k,y_2,\dots,y_n)\varphi(y_1,\dots,y_n)\mathrm{d}y. $$
Considering the change of variables
$$(y_1,\dots,y_n) \mapsto (y_1^k,y_2,\dots,y_n)=(x_1,\dots,x_n)$$
we see that
$$\la f_{*}v,\varphi \ra = \la v,\varphi\circ f \ra = \int_{(\RR_{\geq 0})^n}u(x)\varphi(x)\mathrm{d}x = \la u,\varphi\ra$$
	meaning that $f_{*}v=u$ as desired.%
This finishes the proof of Proposition \ref{prop:power-map:holon}.
\end{proof}

In our main proof of holonomicity, a reduction to the following base case will be made.

\begin{prop}[Base case]\label{cor:base-case-u} 
The distribution $u$ on $\RR^n$ given by the $L^1$ function $\RR^n\to\CC$ sending $x$ in $(0,1)^n$ to
$$
(\prod_{j=1}^n x_j^{a_j}) (\prod_{j=1}^n (\log x_j)^{b_j}  ) \exp(\mathrm{i} 
\prod_{j=1}^n x_j^{c_j})
$$
and  other $x$ to $0$, with complex $a_j$ with $\Re(a_j)>-1$, integers $b_j\ge 0$, and integers $c_j\le 0$,
is holonomic.


\end{prop}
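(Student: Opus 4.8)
The plan is to prove holonomicity by exhibiting an explicit system of first-order differential operators with polynomial (hence entire) coefficients that annihilate $u$ and whose principal symbols cut out a characteristic variety of dimension $n$; holonomicity then follows from Definition \ref{def:holonomic}, since by Bernstein's inequality the characteristic variety of any nonzero coherent $\cD$-module has dimension at least $n$. Holonomicity being local and my operators being globally defined, it suffices to produce one such system. I first reduce to the case $b=0$ (no logarithms): writing $u_a$ for the distribution attached to the exponent vector $a$, the logarithmic factors arise as the derivatives $\partial_a^b u_a$, and derivatives of a holonomic analytic family of distributions remain holonomic, exactly in the spirit of the proof of Proposition \ref{prop:mult} via \cite[Theorem 7.6.1]{Bjork}. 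Equivalently, the computation below applies verbatim to the finite-dimensional module generated by all $(\log x)^\beta x^a e^{\mathrm{i}P}$ with $\beta\le b$, where $P=\prod_j x_j^{c_j}$ is the phase, the logarithm-lowering terms being lower-order and not affecting principal symbols.

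Set $d_j=-c_j\ge 0$, let $m=\prod_j x_j^{d_j}$ (a monomial, hence analytic), and note $P=1/m$. On the open cube, using $x_k\partial_k P=c_k P$, a direct computation gives
\[
x_k\partial_k F = a_k F + \mathrm{i}c_k P\,F = a_k F - \mathrm{i}d_k\,m^{-1}F ,
\]
so that, after multiplying by $m$, the operator
\[
R_k := m\,(x_k\partial_k - a_k) + \mathrm{i}d_k
\]
annihilates $F$ for every $k$. Taking the difference of the relations for two indices $k,l$ in $T:=\{j: d_j>0\}$ clears the factor $m^{-1}F$ and yields the operators
\[
M_{kl} := d_l(x_k\partial_k - a_k) - d_k(x_l\partial_l - a_l),
\]
which also annihilate $F$; and for $k$ in $S:=\{j: d_j=0\}$ one has directly $L_k:=x_k\partial_k - a_k$ annihilating $F$. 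Let $\cI$ be the left ideal generated by all the $R_k$ ($k\in T$), $M_{kl}$ ($k,l\in T$) and $L_k$ ($k\in S$).

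I bound the characteristic variety of $\cD/\cI$. Writing $u_k=x_k\xi_k$, the principal symbols of the generators impose $u_k=0$ for $k\in S$, the proportionality $d_l u_k = d_k u_l$ for $k,l\in T$, and $m\,u_k=0$ for $k\in T$. The proportionality means $u_k=d_k\tau$ on $T$ for a single scalar $\tau$, and then $m\,u_k=0$ forces $m\tau=0$. If $\tau\neq 0$ then $m=0$, so $x_{j}=0$ for some $j\in T$; but then $u_{j}=x_{j}\xi_{j}=0$ while $u_{j}=d_{j}\tau\neq 0$, a contradiction. Hence $\tau=0$, all $u_k$ vanish, and
\[
\Char(\cD/\cI)\subseteq\{(x,\xi): x_k\xi_k=0\text{ for all }k\},
\]
a variety of dimension $n$. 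Since $\cD/\cI$ surjects onto the nonzero module $\cD\cdot F$, it is nonzero, so its characteristic variety has dimension at least $n$, hence exactly $n$: the module is holonomic. (When $T=\emptyset$ the phase is constant and only the $L_k$ occur, with the same conclusion.)

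The step requiring real care — and the one I expect to be the main obstacle — is that the operators above annihilate $F$ only on the open cube, whereas $u$ is the extension of $F$ by zero to $\RR^n$, and differentiating across the boundary may create distributions supported on the faces. At a face $x_j=1$ the function $F$ is analytic, and multiplication by the indicator of $\{x_j<1\}$ preserves holonomicity by Proposition \ref{prop:mult:half} together with the reflection $x_j\mapsto 1-x_j$ (an analytic transformation, Proposition \ref{prop:an}); so these faces are harmless and one may work on the positive orthant and cut down to the cube at the end. The delicate faces are $\{x_j=0\}$, where $F$ is only locally $L^1$ and its classical derivatives fail to be integrable, so the identities $R_kF=0$ must be read in the regularized (finite-part) sense. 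To justify the annihilation of the extension-by-zero on all of $\RR^n$, I would argue by meromorphic continuation in the exponents $a_j$, exactly in the spirit of Propositions \ref{prop:mult} and \ref{prop:mult:half}: for $\Re(a_j)$ large the extension by zero is of class $C^N$ with vanishing boundary jets, so $R_ku=0$ holds classically and $u$ is holonomic by the computation above; holonomicity then propagates to all $a_j$ with $\Re(a_j)>-1$ as the coefficients of the meromorphic continuation, and differentiation in $a_j$ restores the logarithmic factors. This reduces Proposition \ref{cor:base-case-u} to the clean interior computation.
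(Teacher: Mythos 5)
Your proposal is, to my reading, mathematically sound, but it takes a genuinely different route from the paper. The paper first reduces the general exponents $c_j\le 0$ to the shape $c\,\delta_j$ via the power substitutions of Proposition \ref{prop:power-map:holon} and push-forward, and then proves that special case (Proposition \ref{prop:base-case-u}) by induction on $n$, using the multiplicative change of variables $(x_1,x_2)\mapsto(x_1x_2,x_2)$, Cartesian products, and the whole \emph{algebraic} holonomicity apparatus: there, the coherence of $uD^a_{\bfX}$ (Lemma \ref{lem:coherence}) together with Lemma \ref{lem:holonomicity:open} and Bernstein's inequality is precisely what lets one cross the bad faces $x_j=0$ without ever verifying a distributional identity there. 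You instead exhibit explicit first-order annihilators $R_k$, $M_{kl}$, $L_k$ with polynomial coefficients, and your computation of the common zero locus of their symbols (forcing $x_k\xi_k=0$ for all $k$, hence dimension $n$) is correct; the price is that you must justify $\cI\cdot u=0$ across the boundary, which you do by establishing the identities classically for $\Re(a_j)$ large and then continuing meromorphically in the exponents — i.e.\ by the same Bernstein--Sato mechanism (\cite[Theorem 7.6.1]{Bjork} adapted to the positive-part function) that the paper itself invokes in Propositions \ref{prop:mult} and \ref{prop:mult:half}. Your approach buys a shorter, self-contained argument with no induction on $n$ and no detour through algebraic D-modules; the paper's buys complete freedom from boundary regularity issues. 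Two points deserve tightening. First, the blanket principle ``derivatives of a holonomic analytic family of distributions remain holonomic'' is false for arbitrary weakly analytic families; it is true for your specific family $h(x_1)^{\lambda_1}\cdots h(x_n)^{\lambda_n}v_{a^0}$ only because it is of Bernstein--Sato type, so you should phrase the log-recovery step strictly as an application of the holonomicity of Laurent/Taylor coefficients in \cite[Theorem 7.6.1]{Bjork} (or carry out the finite-module variant of the symbol computation, taking e.g.\ powers of $R_k$, whose symbols cut out the same variety). Second, the order of operations matters and should be made explicit: establish holonomicity of the extension by zero from the full orthant for all admissible $a$ first (explicit operators for $\Re(a_j)>1+d_j$, then continuation), and only afterwards cut at the faces $x_j=1$ by iterating Proposition \ref{prop:mult:half} with the reflections of Proposition \ref{prop:an}; this also disposes of the corner configurations automatically.
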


The key point of Proposition \ref{cor:base-case-u} is the holonomicity around $x$ in $[0,1]^n\smallsetminus  (0,1]^n$. The next section is dedicated to a proof of  Proposition \ref{cor:base-case-u} and makes a detour via algebraic holonomicity.

\section{Algebraic Holonomicity}
In this section we introduce the notion of algebraically holonomic distributions (which is a more restrictive notion than that of holonomic distributions in Definition \ref{def:holonomic}) and prove that certain distributions are algebraically holonomic (and hence holonomic), see Lemma \ref{lem:alg:to:holon}, Corollary \ref{cor:alg:to:holon} and Proposition \ref{prop:base-case-u} below.

We will \blue{rely} on the notion of holonomic algebraic D-modules on smooth algebraic varieties, see for instance \cite{HTT}.

\begin{definition}
Let $\bfX$ be a smooth real algebraic variety.
We denote by $D^a_\bfX$ the sheaf of algebraic differential operators on $\bfX$.

As usual we will refer to a  quasi-coherent \blue{sheaf} of $D^a_\bfX$-modules as  a $D^a_\bfX$-module.

Let $X:=\bfX(\R)$. Let $u\in \mathscr{D}'(X)$ be a distribution. We denote by
$u D^a_\bfX$ the sheaf on $\bfX$ defined
\blue{for any Zariski algebraic open subset $\bfU$ of $\bfX$ by
	$$u D^a_\bfX(\bfU):=u|_{\bfU(\R)} D^a_\bfX(\bfU) := \{ud \mid d \in D^a_\bfX(\bfU)\}$$
	with
	$$\la ud, \varphi \ra := \la u, d\varphi \ra$$
	for any $\varphi \in C_{c}^{\infty}(\bfU(\R))$.
}
This is a sheaf of right $D^a_\bfX$-modules \blue{of distributions}.
\end{definition}

\blue{
	\begin{remark} \label{rem:rightstructure}
		We use here the right structure of $D_{\bfX}^a$-module of distributions. Indeed, working with a left structure needs a choice of a volume form on $X$.
		For an open set of $\RR^n$ both points of view coincide using the Lebesgue measure.
	\end{remark}
}

The following lemma is formulated in the language of Schwartz functions on Nash manifolds. We refer the reader to \cite{AG_Sc} for these notions.

\begin{lem}\label{lem:van}
\label{lem: distributions supported on V(f) are killed by power of f}
    Let $X$ be a Nash manifold and let $Z\subset X$ be a closed (semi-algebraic) subset.
    Then for every tempered distribution $u$ on $X$ which is supported on $Z$ and any 
     Nash function $f$ on $X$ which vanishes on $Z$,
    there exists $N \in \N$ such that $f^N \cdot u=0$.
\end{lem}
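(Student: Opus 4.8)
The plan is to unwind the conclusion $f^N\cdot u=0$ into its defining pairing: for every Schwartz function $\varphi$ on $X$ one has $\la f^N u,\varphi\ra = \la u, f^N\varphi\ra$ (note $f^N\varphi\in\cS(X)$ since $f$ is Nash and hence of polynomial growth), so it suffices to find $N$ with $\la u, f^N\varphi\ra=0$ for all such $\varphi$. Observe that the data $Z$ enters only through the chain $\Supp(u)\subset Z\subset\{f=0\}$, so $Z$ may be discarded: the only fact used is that $u$ is supported on the zero locus of $f$.

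The first input is that, being a \emph{tempered} distribution, $u$ is a continuous functional on the Fréchet space $\cS(X)$ and is therefore bounded by finitely many of its defining Schwartz seminorms. Hence there is an integer $m$ such that, in local Nash coordinates (or through a fixed closed Nash embedding $X\hookrightarrow\RR^M$), $u$ is estimated by weighted derivatives of order at most $m$; informally, $u$ has finite order $m$. The second, elementary, input is a Leibniz computation: for $N\ge m+1$ and any multi-index $\alpha$ with $|\alpha|\le m$, every term of $\partial^\alpha(f^N\varphi)$ retains a factor $f^{N-|\alpha|}$, so $\partial^\alpha(f^N\varphi)$ vanishes on $\{f=0\}$. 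Thus $f^N\varphi$ vanishes to order $m$ along $\Supp(u)$.

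The heart of the argument is then to show that a finite-order tempered distribution supported on a closed set annihilates any Schwartz function whose derivatives up to that order vanish on the set. I would prove this by localization: choose semialgebraic cut-offs $\chi_\eps$ equal to $1$ on an $\eps$-neighborhood of $\{f=0\}$ and supported in its $2\eps$-neighborhood. Then $f^N\varphi(1-\chi_\eps)$ is supported off the $\eps$-neighborhood of $\{f=0\}\supset\Supp(u)$, so $\la u, f^N\varphi(1-\chi_\eps)\ra=0$ and hence $\la u, f^N\varphi\ra=\la u, f^N\varphi\,\chi_\eps\ra$; I would then bound the right-hand side by the order-$m$ Schwartz seminorm and let $\eps\to 0$.

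The step I expect to be the main obstacle is forcing this last quantity to $0$, which is exactly where the Nash (semialgebraic) hypothesis is indispensable. The cut-off derivatives grow, $|\partial^\beta\chi_\eps|\le C\eps^{-|\beta|}$, but on $\Supp(\chi_\eps)$ the mean-value inequality together with the boundedness and polynomial growth of $\nabla f$ for Nash $f$ give $|f|\le C\eps$. Distributing derivatives by Leibniz, each term of $\partial^\beta(f^N\varphi\chi_\eps)$ with $|\beta|\le m$ carries a factor $f^{N-|\beta_1|}$ of size at most $C\eps^{N-m}$ against a cut-off factor of size at most $C\eps^{-m}$, yielding an overall bound of order $\eps^{N-2m}$, which tends to $0$ once $N>2m$; the non-compactness of $\{f=0\}$ and the behavior at infinity are absorbed by the rapid decay of $\varphi$ against the polynomial growth of $f$ and its derivatives. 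Choosing $N$ past this threshold gives $\la u, f^N\varphi\ra=0$ for all $\varphi$, that is, $f^N\cdot u=0$.
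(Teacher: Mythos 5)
Your proof is correct in outline but follows a genuinely different route from the paper's. The paper stratifies $Z$ into Nash submanifolds and inducts on the number of strata; in the base case ($Z$ a closed Nash submanifold) it invokes Borel's lemma for Nash manifolds to identify $\cS(X)/\cS(X\smallsetminus Z)$ with the inverse limit of $\cS(X)/W_k(Z)$, and continuity of $u$ on that inverse limit forces $u$ to kill $W_N(Z)$ for some $N$. You instead discard $Z$ entirely (correctly observing that only $\Supp(u)\subset\{f=0\}$ matters) and run the classical cut-off argument of H\"ormander's Theorem 2.3.3 directly: finite order $m$ of the tempered $u$, Leibniz to see that $f^N\varphi$ is flat to order $m$ on $\{f=0\}$ for $N>m$, and then the $\varepsilon$-neighborhood estimate $O(\varepsilon^{N-2m})$ to conclude. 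Both proofs ultimately rest on the same fact --- a tempered distribution has finite order and annihilates Schwartz functions flat to that order on its support --- but the paper imports it as a black box (Borel's lemma from \cite{AG_SM}) at the cost of a stratification argument, whereas you prove it by hand for an arbitrary closed set, at the cost of uniform-at-infinity estimates (polynomial growth of $f$ and its derivatives versus rapid decay of $\varphi$), which is precisely where your use of the Nash hypothesis enters. Two small repairs to your write-up: the cut-offs $\chi_\varepsilon$ cannot be both $C^\infty$ and semialgebraic (a smooth semialgebraic function is Nash, hence analytic, hence cannot equal $1$ on one open set and $0$ on another within a connected component); they need only be smooth with $|\partial^\beta\chi_\varepsilon|\le C\varepsilon^{-|\beta|}$, which a standard mollified indicator provides. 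Also $\nabla f$ is in general only polynomially bounded, not bounded, and on a non-open affine Nash manifold the mean-value estimate $|f|\le C(x)\varepsilon$ near $\{f=0\}$ requires either extending $f$ to a semialgebraic tubular neighborhood or comparing intrinsic and extrinsic distances; these are routine but should be said.
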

\begin{proof}
Without loss of generality, we may assume that $X$ is affine.
Consider a stratification $Z= \bigcup\limits_{i=1}^k S_i$ by Nash submanifolds. We prove the lemma by induction on $k$.

In the case $k=1$ the set $Z$ is a closed Nash manifold.
A distribution $u$ as above is a continuous functional $u : \Sc(X) \to \C$ which vanishes on $\Sc(X\smallsetminus Z)$.
Set
$$W_k(Z) =\{
g\in \Sc(X) | \text{ all partial derivatives of $g$ up to degree $k$ vanish at $x$,}  ~ \forall x \in Z
\}.$$
Then by Borel's lemma (see e.g. \cite[Lemma B.0.9]{AG_SM}) we have an isomorphism of topological vector spaces\footnote{By the Banach open map theorem, the fact that this is an isomorphism of vector spaces implies that this is also an isomorphism of topological vector spaces.}
\[
\Sc(X) / \Sc(X\backslash Z)=\lim\limits_{\leftarrow_k}\left( \Sc(X) / W_k(Z)\right).
\]
\blue{By continuity of $u$, the functional $u$ considered as a functional on the inverse limit, has to be bounded on some open neighborhood of $0$. Such a neighborhood includes the image of $W_N(Z)$ for some N. Thus $u$ must vanish on $W_N(Z)$.}
We conclude that $f^{N+1}\cdot u=0$.


Assume now that the claim holds for any Nash manifold $X'$ and closed semi-algebraic subset $Z' \subseteq X'$ together with a stratification with at most $k-1$ strata.
Order $(S_i)$ such that $S_1$ is closed in $X$. By the induction assumption, the statement is valid when we replace $(X,Z)$ by $(X\smallsetminus S_1, Z\smallsetminus S_1)$. Therefore, there exists a number $N$ such that $$(f|_{X\smallsetminus S_1})^N \cdot u|_{X\smallsetminus S_1}=0.$$
    Let $u':=f^N u$. Then $\langle u', g\rangle=0$ for every $g \in \Sc(X \smallsetminus S_1)$.
    We conclude that $u'$ is supported on $S_1$.
    By the base case of the induction there exists a number $N'$ such that $f^{N'}u'=0$. The required statement now follows as $$f^{N+N'}u=0.$$
\end{proof}

\begin{lemma}\label{lem:coherence}
	\blue{Let $\bfX$ be a smooth real algebraic variety and $X = \bfX(\R)$.}
If $u\in \mathscr{D}'(X)$ is a tempered distribution, then $u D^a_\bfX$ is a coherent $D^a_\bfX$-module.
\end{lemma}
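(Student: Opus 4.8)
The plan is to show that $uD^a_{\bfX}$ is a coherent $D^a_{\bfX}$-module by checking coherence locally and using the fact that distributions are locally (finite-order) tempered objects. Since coherence is a local property on $\bfX$, I may cover $\bfX$ by Zariski-open affine pieces and work on each. On an affine piece, I want to exhibit $uD^a_{\bfX}$ as a finitely generated $D^a_{\bfX}$-module whose relation sheaf is also finitely generated; the first part is automatic, since $uD^a_{\bfX}$ is by construction generated by the single section $u$. So the real content is to produce a good (coherent) annihilator.

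The key step is to control the annihilator ideal $\mathrm{Ann}(u) = \{d \in D^a_{\bfX} \mid ud = 0\}$ and show that $D^a_{\bfX}/\mathrm{Ann}(u) \cong uD^a_{\bfX}$ is coherent. Here I would invoke the Bernstein filtration / good filtration machinery for algebraic D-modules: for a finitely generated $D^a_{\bfX}$-module, coherence is equivalent to the existence of a good filtration, and over the Weyl algebra (on an affine smooth variety) every finitely generated module over the Noetherian ring $D^a_{\bfX}(\bfU)$ is automatically coherent. Concretely, $D^a_{\bfX}(\bfU)$ is a (left and right) Noetherian ring, so the cyclic right module $uD^a_{\bfX}(\bfU) = uD^a_{\bfX}(\bfU)$ is finitely presented, and finite presentation of a cyclic module over a Noetherian ring is immediate once one knows the module is well-defined as an honest $D^a_{\bfX}(\bfU)$-module (i.e. that $ud$ lands in a fixed space of distributions). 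This is exactly where the temperedness hypothesis enters: by Lemma~\ref{lem:van} and the general theory of tempered distributions on Nash manifolds (via \cite{AG_Sc}), $u$ lives in a space on which the algebraic differential operators act with controlled order, so the module structure is genuinely over $D^a_{\bfX}$ and not merely a formal quotient.

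I would therefore organize the argument as follows. First, reduce to $\bfX$ affine and regard $u$ as a tempered distribution, i.e.~an element of $\Sc'(X)$, so that every $d \in D^a_{\bfX}(\bfX)$ acts and $ud$ is again a tempered distribution, showing $uD^a_{\bfX}$ is a well-defined submodule of the (right) $D^a_{\bfX}$-module of tempered distributions. Second, note that $D^a_{\bfX}(\bfX)$ is Noetherian (being a finite extension of a Weyl-type algebra over the affine coordinate ring of a smooth variety), so the cyclic module $uD^a_{\bfX}(\bfX)$ is Noetherian and in particular finitely presented. Third, conclude that the associated sheaf $uD^a_{\bfX}$ is coherent, using that on a smooth affine variety the global-sections functor identifies coherent $D^a_{\bfX}$-modules with finitely generated $D^a_{\bfX}(\bfX)$-modules (the algebraic analogue of Cartan's Theorem~A/B, see \cite{HTT}).

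The main obstacle I anticipate is making precise that $uD^a_{\bfX}$ is a genuine quasi-coherent sheaf of $D^a_{\bfX}$-modules, rather than just a presheaf assignment $\bfU \mapsto u|_{\bfU(\R)}D^a_{\bfX}(\bfU)$: one must verify that this assignment sheafifies correctly and that restriction maps are compatible, which requires knowing that the local annihilators glue, i.e.~that $\mathrm{Ann}(u)$ is already a coherent ideal sheaf. This is precisely where temperedness is essential --- for a general distribution the annihilator need not behave well globally, but for tempered $u$ one can exploit the equivalence between coherent $D^a_{\bfX}$-modules and finitely generated modules over the global sections ring on the affine pieces, together with the Noetherianity of that ring, to force coherence. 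Once this structural point is settled, the coherence of the cyclic module is formal.
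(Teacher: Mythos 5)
There is a genuine gap, and it sits exactly at the point you flag as your ``main obstacle'' but then do not resolve. Your argument runs: reduce to affine $\bfX$, observe that $D^a_\bfX(\bfX)$ is Noetherian so the cyclic module $uD^a_\bfX(\bfX)$ is finitely presented, and then invoke the equivalence between coherent $D^a_\bfX$-modules and finitely generated modules over global sections to conclude. But that equivalence applies to sheaves already known to be quasi-coherent; here $uD^a_\bfX$ is \emph{defined} concretely as the assignment $\bfU\mapsto u|_{\bfU(\R)}D^a_\bfX(\bfU)$ inside the sheaf of distributions, and the entire content of the lemma is to show that this assignment is $\cO_\bfX$-quasi-coherent, i.e.\ that for a distinguished open $\bfU=\bfX_f$ the annihilator satisfies $\mathrm{Ann}(u|_{\bfU(\R)})=\mathrm{Ann}(u)[f^{-1}]$. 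Noetherianity of the global sections ring tells you nothing about this localization property; asserting that the equivalence of categories ``forces coherence'' presupposes the very quasi-coherence you need to establish. The paper's proof addresses this head-on: by the criterion of \cite[Theorem 1.4.1]{EGA1} one must show that if $d\in D^a_\bfX(\bfX_f)$ kills $u|_{\bfX_f(\R)}$, then $df^n$ is a global operator for some $n$ (automatic) \emph{and} $u\,df^n=0$ on all of $X$ for a possibly larger $n$. The distribution $u\,df^n$ is tempered and supported on the vanishing locus of $f$, so Lemma \ref{lem:van} kills it by a further power of $f$. This is where temperedness genuinely enters.

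Relatedly, your invocation of Lemma \ref{lem:van} is for the wrong purpose: you use it to argue that ``the algebraic differential operators act with controlled order, so the module structure is genuinely over $D^a_\bfX$.'' The well-definedness of the right action of $D^a_\bfX$ on distributions has nothing to do with temperedness or with Lemma \ref{lem:van} (any distribution can be hit with a differential operator having smooth coefficients). Lemma \ref{lem:van} is a support--annihilation statement --- a tempered distribution supported on a closed semi-algebraic set $Z$ is killed by a power of any Nash function vanishing on $Z$ --- and its only role is in the localization step above. Note also the paper's Remark \ref{rem:holonomicity}: the extension step $df^n\in D^a_\bfX(\bfX)$ fails for analytic differential operators, which is precisely why the algebraic category is used here; a proof that does not isolate this step cannot explain where the argument would break analytically.
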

\begin{proof}
Without loss of generality, we may assume that $\bfX$ is affine. The sheaf $u D^a_\bfX$ is locally finitely generated over $D^a_\bfX$ so, by \cite[Proposition 1.4.9]{HTT} it is enough to show that it is $O_{\bfX}$-quasi-coherent. The modules are cyclic, so it is enough to show that the ideal sheaf of annihilators defined by $I(\bfU):=Ann(u|_{\bfU(\R)})$
    is $O_{\bfX}$-quasi-coherent.
    By \cite[Theorem 1.4.1]{EGA1}, this  follows from the fact that if $\bfU=\bfX_f$ is the non-vanishing locus of a regular function $f$ and
    $u|_{\bfU(\R)} d=0$ for some $d\in D^a_\bfX(\bfU)$
    then there exists $n\in \bfN$ such that
    \begin{enumerate}
        \item $df^n\in D^a_\bfX(\bfX)$
        \item $u df^n=0$.
    \end{enumerate}
    This  follows from Lemma \ref{lem:van}.
\end{proof}
\begin{remark}\label{rem:holonomicity}
~
    \begin{itemize}
	    \item Note that item (1) of the above proof does not work for the analytic case. For example, 		
		    \blue{if the distribution $u$ defined by the continuous function $e^g$ with $g$ the continuous function defined by
			    $g(0)=0$ and $g(x)=e^{-1/x^2}$ for $x\neq 0$, and the differential operator $d=\frac{d}{dx}-g'$.
		    }
        \item The sheaf $u D^a_\bfX$ is a coherent sheaf on the  algebraic variety $\bfX $ which is defined over $\R$. It is easy to construct from such a sheaf a coherent sheaf on the complexification $\bfX_\C$. So we can use notions defined for D-modules over algebraically closed fields also in the context of coherent sheaves over $\bfX$.
        \item
		For a smooth function $f\in C^\infty(X)$, we define similarly the left coherent $D^a_\bfX$-module $D^a_\bfX f$ \blue{of smooth functions.}
    \end{itemize}
\end{remark}

\begin{definition}
\blue{Let $\bfX$ be a smooth real algebraic variety and $X = \bfX(\R)$.}
\begin{enumerate}
    \item We say that a tempered distribution $u\in \mathscr{D}'(X)$
    is algebraically holonomic if $u D^a_\bfX$ is an holonomic (right) $D^a_\bfX$-module.
    \item We say that a smooth function $f\in C^\infty(X)$
    is algebraically holonomic if $ D^a_\bfX f$ is an holonomic (left) $D^a_\bfX$-module.
\end{enumerate}
\end{definition}
\begin{lem}\label{lem:hol.eq}
	\blue{Let $\bfX$ be a smooth real algebraic variety and $X = \bfX(\R)$.}
    Let $u\in \mathscr{D}'(X)$ be a tempered distribution. Then the following are equivalent:
    \begin{enumerate}
        \item $u$ is algebraically holonomic,
	\item for each $x \in \bfX(\bR)$ there is a Zariski open neighborhood $\bfU$ of $x$ in $\bfX$ and a coherent right ideal \blue{subsheaf} $\cI$ of  $\blue{D^{a}_{\bfU}}$ such that
		$u|_{\bfU(\bR)}\cI=0$ and the quotient sheaf $\cI \backslash \blue{D^{a}_{\bfU}}$ is an \blue{algebraic} holonomic \blue{right} $\blue{D^{a}_{\bfU}}$-module.
    \end{enumerate}
\end{lem}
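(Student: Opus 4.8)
The plan is to route both implications through the cyclic presentation of the right $D^a_\bfX$-module attached to $u$. Writing $\operatorname{Ann}(u)$ for the right ideal subsheaf $\{d\in D^a_\bfX\mid ud=0\}$, the defining surjection $D^a_\bfX\twoheadrightarrow uD^a_\bfX,\ d\mapsto ud$, identifies $uD^a_\bfX$ with $\operatorname{Ann}(u)\backslash D^a_\bfX$ as sheaves of right $D^a_\bfX$-modules. For the implication $(1)\Rightarrow(2)$ I would simply take $\cI=\operatorname{Ann}(u)$, which is a coherent right ideal, being the kernel of a morphism of coherent $D^a_\bfX$-modules (both $D^a_\bfX$ and $uD^a_\bfX$ are coherent, the latter by Lemma \ref{lem:coherence}). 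Then $u|_{\bfU(\bR)}\cI=0$ holds by construction on every $\bfU$, and $\cI\backslash D^a_\bfU\cong (uD^a_\bfX)|_\bfU$ is holonomic because $uD^a_\bfX$ is holonomic by hypothesis $(1)$; so $(2)$ holds, with the same $\cI$ working for every $x$.

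For $(2)\Rightarrow(1)$ the idea is to check holonomicity locally and to compare the given $\cI$ with the annihilator. By Lemma \ref{lem:coherence}, $uD^a_\bfX$ is coherent, so it suffices to verify that its characteristic variety is Lagrangian, a condition that is local on $\bfX$. Fix $x\in\bfX(\bR)$ and take $\bfU,\cI$ as in $(2)$. The vanishing $u|_{\bfU(\bR)}\cI=0$ says exactly that $\cI\subseteq\operatorname{Ann}(u)|_\bfU$, so there is a surjection of right $D^a_\bfU$-modules
$$\cI\backslash D^a_\bfU\ \twoheadrightarrow\ \operatorname{Ann}(u)|_\bfU\backslash D^a_\bfU\ \cong\ (uD^a_\bfX)|_\bfU.$$
Since a quotient of a holonomic module is holonomic (e.g. \cite[Proposition 3.1.2]{HTT}), the restriction $(uD^a_\bfX)|_\bfU$ is holonomic. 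Letting $x$ range over $\bfX(\bR)$, the corresponding $\bfU$'s cover a Zariski open set $\bfW\supseteq\bfX(\bR)$ on which $uD^a_\bfX$ is holonomic.

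It then remains to upgrade holonomicity on $\bfW$ to holonomicity on all of $\bfX$ (equivalently on $\bfX_\C$, in the sense of Remark \ref{rem:holonomicity}), and this is the step I expect to be the main obstacle, since $\bfW$ need not contain the whole support of $uD^a_\bfX$: the Zariski closure of $\operatorname{supp}(u)$ may acquire non-real points lying outside $\bfW$. The plan is to exploit the real structure, namely that $uD^a_\bfX$ is defined over $\bR$ and is generated by a distribution, so that $\operatorname{supp}(u)\subseteq\bfX(\bR)$ is Zariski dense in its support. Consequently the locus of $T^*\bfX$ over which the characteristic variety has dimension $>n$ is a closed conic subvariety defined over $\bR$ whose projection is disjoint from $\bfX(\bR)$, and one must argue that such a ``bad'' locus is forced to be empty, using that the characteristic variety of a module generated by a real distribution is controlled by the behaviour of $u$ over the reals and cannot support components living purely over non-real points. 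I expect that this last point — rather than the essentially formal annihilator/quotient bookkeeping — carries the real content; in the intended applications one may alternatively reduce to the case where $\operatorname{supp}(u)$ is Zariski dense in $\bfX$ (e.g. $\bfX=\mathbb{A}^n$ with $u$ supported in a box), in which case $\bfW$ is already a neighbourhood of the whole support and the global conclusion is immediate.
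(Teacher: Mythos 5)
The direction $(1)\Rightarrow(2)$ of your proposal matches the paper: take $\cI=\operatorname{Ann}(u)$, note it is coherent as the kernel of the surjection $D^a_\bfX\to uD^a_\bfX$ between coherent sheaves (Lemma \ref{lem:coherence}), and observe that the quotient is isomorphic to $uD^a_\bfX$. Your first half of $(2)\Rightarrow(1)$ is also correct: the vanishing $u|_{\bfU(\bR)}\cI=0$ gives $\cI\subseteq\operatorname{Ann}(u)|_{\bfU}$, hence a surjection $\cI\backslash D^a_\bfU\twoheadrightarrow (uD^a_\bfX)|_\bfU$, so $uD^a_\bfX$ is holonomic on the Zariski open set $\bfV=\bigcup_x\bfU_x\supseteq\bfX(\R)$.

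The genuine gap is the last step, which you correctly identify as the crux but do not actually prove. Your proposed mechanism --- that the ``bad'' locus of the characteristic variety is a closed subvariety defined over $\R$ whose real points project into $\bfX(\R)\subseteq\bfV$, and that such a locus ``is forced to be empty'' --- does not work as stated: a nonempty real algebraic subvariety can have no real points at all (e.g.\ $x^2+y^2+1=0$), so density of $\operatorname{supp}(u)$ in the real points gives no control over components of $\Char(uD^a_\bfX)$ living over $\bfX\smallsetminus\bfV$. The paper closes this gap by a different, sheaf-theoretic argument: writing $i\colon\bfV\to\bfX$ for the open immersion and $M:=(uD^a_\bfX)|_\bfV$, the natural map $uD^a_\bfX\to i_*(M)$ has as kernel the sections $ud$ vanishing on $\bfV$; since $\bfV$ contains all real points, such a section is the zero distribution, so the map is injective. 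As $i_*$ of a holonomic algebraic $D$-module along an open immersion is holonomic and submodules of holonomic modules are holonomic, $uD^a_\bfX$ is holonomic on all of $\bfX$. This completely sidesteps the need to analyze the characteristic variety over the non-real locus. Your fallback (reducing to the case where $\operatorname{supp}(u)$ is Zariski dense in $\bfX$) would prove only a special case, not the lemma as stated.
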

\begin{proof}~

    (1)$\Rightarrow$(2):
    Assume $\bfU$ to be an affine neighborhood of $x$ and $\cI$ to be the annihilator of $u|_{\bfU(\bR)}$.
    \blue{  We have the exact sequence
	    \[0 \to \cI \to D_{\bfU}^a \to uD_{\bfU}^a\mid_{\bf U} \to 0,\]
	    the sheaf $D_{\bfU}^a$ is coherent and the sheaf $uD_{\bfU}^a\mid_{\bfU}$ is coherent by Lemma \ref{lem:coherence}. So $\cI$ is coherent because the kernel of any morphism of sheaves modules on a coherent Noetherian scheme (so, in particular, an algebraic variety) is coherent (see for instance \cite[Chapter II, Proposition 5.7]{Hartshorne}. The sheaf $\cI \backslash \blue{D^{a}_{\bfU}}$ is \blue{algebraic} holonomic because it is isomorphic to $uD_{\bfU}^a\mid_{\bfU}$.
	    }

    (2)$\Rightarrow$(1):
    For any $x\in \bfX(\bR)$, let $\bfU_x$ be a Zariski open neighborhood of $x$ in $\bfX$ such that there exists a coherent right ideal sheaf $\cI$ of  $\blue{D^{a}_{\bfU_x}}$ such that $u|_{\bfU(\bR)_x}\cI=0$ and the quotient sheaf $\cI \backslash \blue{D^{a}_{\bfU_x}}$ is an \blue{algebraic} holonomic $\blue{D^{a}_{\bfU_x}}$-module.
    \blue{Let $\bfV$ be the union $\bigcup_{x \in \bfX(\bR)} \bfU_x$.
	    Let $M:=(uD^{a}_{\bfX})|_{\bfV}$. By construction $M$ is algebraically holonomic.
	    Let $i\colon \bfV \to \bfX$ be the canonical embedding.
	    There is a natural map $uD^{a}_{\bfX} \to i_*(M)$.
	    The kernel of this map are sections of $uD^{a}_{\bfX}$ which vanish on $\bfV$.
	    Since $\bfV$ contains all the real points of $\bfX$ this map is an embedding.
	    This proves the assertion because a submodule of an algebraic holonomic module is algebraically  holonomic.
    }
\end{proof}


\begin{lemma}\label{lem:alg:to:holon}
Let $\bfX$ be a Zariski open subset in an affine space over $\R$ and $X:=\bfX(\bR)$.
Let $u\in \mathscr{D}'(X)$ be a tempered algebraically holonomic distribution. Then $u$ is holonomic.
\end{lemma}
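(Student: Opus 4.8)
The plan is to transfer the algebraic holonomicity of $uD^a_\bfX$ to the analytic holonomicity of Definition~\ref{def:holonomic} by base-changing to the complexification of $\bfX$ and then analytifying, all the while keeping track of the passage between the right $D^a_\bfX$-module structure used in the algebraic setting and the left $\cD_{\CC^n}$-module structure of Notation~\ref{not:hol}. We may assume $u\neq 0$. Since $\bfX\subset\AA^n_\R$ is Zariski open, $X=\bfX(\R)$ is an open subset of $\RR^n$, so Definition~\ref{def:holonomic} applies to $u$, and I would first record that the right annihilator $\cI^{\alg}:=\{d\in D^a_\bfX\mid ud=0\}$ is a coherent right ideal by Lemma~\ref{lem:coherence}, with $uD^a_\bfX\cong \cI^{\alg}\backslash D^a_\bfX$ holonomic by hypothesis.

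Next I would carry out the left--right reconciliation. As $\bfX$ is open in affine space, its canonical bundle is trivialized by the Lebesgue volume form (Remark~\ref{rem:rightstructure}), so side-changing is implemented by the formal transpose $d\mapsto{}^{\mathrm t}d$, the $\R$-linear anti-automorphism of $D^a_\bfX$ that is the identity on regular functions, sends $\partial_i$ to $-\partial_i$, and is an involution. Checking the base cases $a$ and $\partial_i$ and using multiplicativity, one gets for every $d$ the identity of distributions $u\,d=({}^{\mathrm t}d).u$ (equivalently $\langle d.u,\varphi\rangle=\langle u,{}^{\mathrm t}d\,\varphi\rangle$). Hence $\cJ^{\alg}:={}^{\mathrm t}(\cI^{\alg})$ is a coherent left ideal with $\cJ^{\alg}.u=0$, and the left module $D^a_\bfX/\cJ^{\alg}$, being the transpose (side-change) of $\cI^{\alg}\backslash D^a_\bfX$, is again holonomic (the transpose changes the characteristic variety only by $\xi\mapsto-\xi$, preserving its dimension).

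Then I would complexify and analytify. Base-change of $D^a_\bfX/\cJ^{\alg}$ to the complexification $\bfX_\CC$ (a smooth Zariski open in $\AA^n_\CC$ whose complex points contain $X$) stays holonomic, since extension of scalars does not change the dimension of the characteristic variety. Fixing $x\in X$ and a polydisc $\Omega\subset\CC^n$ around $x$ inside $(\bfX_\CC)^{\mathrm{an}}$, the regular coefficients of the finitely many generators of $\cJ^{\alg}$ are holomorphic on $\Omega$, so these generators lie in $\cD_{\CC^n\mid\Omega}$; let $\cI:=\cD_{\CC^n\mid\Omega}\cdot\cJ^{\alg}$ be the left ideal they generate. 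Since $P.u=0$ for each generator $P$ of $\cJ^{\alg}$ and $(QP).u=Q.(P.u)$ for $Q\in\cD_{\CC^n\mid\Omega}$, we obtain $\cI.u=0$ on $\Omega\cap\RR^n$. Because the analytification functor $\cD_{\CC^n\mid\Omega}\otimes_{D^a}(-)$ is exact, the quotient $\cD_{\CC^n\mid\Omega}/\cI$ is identified with the analytification of the holonomic algebraic module $D^a_{\bfX_\CC}/(\cJ^{\alg})_\CC$; as analytification carries the algebraic characteristic variety to its analytification, $\dim\Char(\cD_{\CC^n\mid\Omega}/\cI)=n$ (see \cite{HTT}), so $\cD_{\CC^n\mid\Omega}/\cI$ is analytic holonomic. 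Running $x$ over $X$ then yields that $u$ is holonomic.

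The step I expect to be the main obstacle is the last one, namely transporting holonomicity across the algebraic-to-analytic comparison: one must be sure that $\cD_{\CC^n\mid\Omega}/\cI$ genuinely is the analytification of the algebraic quotient (and not merely a target of a map from it), which relies on the flatness of $\cD_{\CC^n}^{\mathrm{an}}$ over the sheaf of algebraic operators, and that the characteristic variety is computed compatibly so that its dimension is read off as $n$. The real-to-complex base change and the transpose bookkeeping are comparatively routine, but they must be spelled out so that $\cI.u=0$ is an honest identity of distributions on $\Omega\cap\RR^n$ and so that $\Omega$ avoids the complex polar locus of the coefficients.
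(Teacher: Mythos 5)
Your proposal is correct and follows essentially the same route as the paper's proof: take the (coherent) annihilating ideal, pass from the right to the left module structure using the Lebesgue trivialization of the canonical bundle, complexify, and extend to the analytic differential operators on $\Omega\subset\CC^n$ while noting that the characteristic variety keeps dimension $n$. You merely spell out more explicitly the transpose bookkeeping and the algebraic-to-analytic comparison that the paper compresses into citations of Remarks \ref{rem:rightstructure} and \ref{rem:hol}.
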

%
\begin{proof}
\blue{We assume $\bfX$ to be a Zariski open subset of $\AA^{n}_{\RR}$.
Let $x\in X$. As $u$ is an algebraic holonomic distribution, by Lemma \ref{lem:hol.eq} there is a Zariski open neighborhood $\bfU\subset \bfX$ of $x$ and a coherent right ideal subsheaf $\cI$ of $D_{\bfU}^{a}$ such that $u_{\mid \bfU(\RR)}\cI=0$ and the quotient sheaf $\cI \backslash D^a_\bfU = 0$.
This implies that the zero sets of the principal symbols of the elements of $\cI$ cut a variety of dimension $\dim \bfU$.
Then using Remark \ref{rem:rightstructure} and Remark \ref{rem:hol}, $u$ is holonomic by Definition \ref{def:holonomic} using $\Omega = \bfU(\CC)$
and the sheaf $\cI_{\CC}\cD_{\CC^n \mid \Omega}$.
}
\end{proof}

The following lemma is obvious from the definitions:

\begin{lemma}\label{lem:hol.loc}
	The notion of algebraically holonomicity is Zariski local. \blue{Let $\bfX$ be a smooth real algebraic variety and $X = \bfX(\R)$.} Namely, if $\bfX=\bigcup_i \bfU_i$ is a Zariski cover and $u \in \mathscr{D}'(X)$  such that $u|_{\bfU_i(\R)}$ is \blue{algebraically} holonomic, then $u$ is \blue{algebraically} holonomic.
\end{lemma}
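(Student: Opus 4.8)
The plan is to reduce the assertion to two essentially formal facts: that the construction $u \mapsto uD^a_\bfX$ is compatible with restriction to Zariski opens, and that holonomicity of a coherent $D^a$-module is a Zariski-local condition.

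First I would record the compatibility with restriction. For any Zariski open $\bfU \subset \bfX$ there is a canonical identification $(uD^a_\bfX)|_{\bfU} = u|_{\bfU(\R)}\, D^a_{\bfU}$ of sheaves of right $D^a_{\bfU}$-modules. This is immediate by unwinding the definition: for a Zariski open $\bfV \subset \bfU$, both sides evaluate to $\{u|_{\bfV(\R)}\,d \mid d \in D^a_\bfX(\bfV)\}$, with the pairing $\langle u|_{\bfV(\R)}d, \varphi\rangle = \langle u, d\varphi\rangle$, using that $D^a_{\bfU}(\bfV) = D^a_\bfX(\bfV)$ since $\bfU$ is open in $\bfX$. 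In particular, the hypothesis that $u|_{\bfU_i(\R)}$ is algebraically holonomic says precisely that the restriction $(uD^a_\bfX)|_{\bfU_i}$ is a holonomic right $D^a_{\bfU_i}$-module.

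Next I would invoke coherence together with the locality of holonomicity. Since $u$ is tempered (as required for the notion of algebraic holonomicity to apply to it), the sheaf $uD^a_\bfX$ is a coherent $D^a_\bfX$-module by Lemma \ref{lem:coherence}. For a coherent $D^a_\bfX$-module $M$, holonomicity is equivalent to the dimension bound $\dim \Char(M) \le \dim \bfX$, and the characteristic variety is cut out locally by the principal symbols (Remark \ref{rem:hol}), so that $\Char(M)|_{\bfU_i} = \Char(M|_{\bfU_i})$; hence the dimension bound may be tested on the members of any Zariski cover (see \cite{HTT}). Applying this to $M = uD^a_\bfX$ and the cover $\{\bfU_i\}$, the holonomicity of each $(uD^a_\bfX)|_{\bfU_i}$ yields $\dim \Char(uD^a_\bfX) \le \dim \bfX$, i.e. $uD^a_\bfX$ is holonomic, which is exactly the algebraic holonomicity of $u$.

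I do not expect a genuine obstacle here: the content is the compatibility of $uD^a_\bfX$ with Zariski restriction, which is pure unwinding of the definition, together with the standard locality of holonomicity on the base variety. The only point that deserves a word of care is that $u$ must itself be tempered in order for $uD^a_\bfX$ to be coherent and for the notion of holonomicity to be available, which is implicit in the formulation of the lemma.
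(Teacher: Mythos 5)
Your proof is correct, and it is precisely the unwinding the authors have in mind: the paper offers no argument at all, introducing the lemma with ``The following lemma is obvious from the definitions.'' Your two ingredients --- the tautological identification $(uD^a_\bfX)|_{\bfU_i} = u|_{\bfU_i(\R)}D^a_{\bfU_i}$ and the Zariski-locality of the characteristic-variety dimension bound for coherent modules --- are exactly what makes it obvious; the only (harmless) remark is that you could bypass Lemma \ref{lem:coherence} entirely, since coherence is itself Zariski-local and is already built into the hypothesis that each restriction is holonomic.
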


\begin{lemma} \label{lem:holonomicity:open}
	Let $\bfX$ be a smooth algebraic variety \blue{of dimension $n$} defined over $\R$. Let $x\in X:=\bfX(\R)$ and $\bfU:=\bfX\smallsetminus\{x\}$.
Let $M$ be a \blue{coherent} $D_\bfX^a$-module such that $M|_\bfU$ is holonomic. Then $M$ is \blue{algebraically} holonomic.
\end{lemma}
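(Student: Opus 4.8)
The plan is to argue entirely through the characteristic variety and Bernstein's inequality, after passing to the complexification. By Remark~\ref{rem:holonomicity}, the coherent $D^a_\bfX$-module $M$ determines a coherent D-module $M_{\CC}$ on the complexification $\bfX_{\CC}$, and algebraic holonomicity of $M$ is exactly the condition $\dim\Char(M_{\CC})\le n$ on the characteristic variety $\Char(M_{\CC})\subset T^*\bfX_{\CC}$ (equivalently, by Bernstein's inequality, $M=0$ or $\dim\Char(M_{\CC})=n$; see \cite{HTT}). Since $x\in\bfX(\R)$ is a closed point with residue field $\R$, its base change $\{x\}\times_{\R}\CC=\operatorname{Spec}(\R\otimes_{\R}\CC)=\operatorname{Spec}(\CC)$ is a single reduced complex point $x_{\CC}$, so that $\bfU_{\CC}=\bfX_{\CC}\smallsetminus\{x_{\CC}\}$.

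The two facts I would invoke are, first, that the characteristic variety commutes with restriction to Zariski open subsets, so that for the projection $\pi\colon T^*\bfX_{\CC}\to\bfX_{\CC}$ one has $\Char(M_{\CC})\cap\pi^{-1}(\bfU_{\CC})=\Char(M_{\CC}|_{\bfU_{\CC}})$; and second, Bernstein's inequality, which guarantees $\dim\Char(N)\ge n$ for every nonzero coherent D-module $N$ on a smooth $n$-dimensional variety.

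The dimension count then proceeds as follows. One decomposes
$$\Char(M_{\CC})=\bigl(\Char(M_{\CC})\cap\pi^{-1}(\bfU_{\CC})\bigr)\cup\bigl(\Char(M_{\CC})\cap\pi^{-1}(x_{\CC})\bigr).$$
By hypothesis $M|_\bfU$ is holonomic, so the first piece equals $\Char(M_{\CC}|_{\bfU_{\CC}})$ and is either empty or of dimension exactly $n$; in any case its dimension is at most $n$. The second piece is contained in the single cotangent fiber $\pi^{-1}(x_{\CC})=T^*_{x_{\CC}}\bfX_{\CC}$, an affine space of dimension $n$, hence again of dimension at most $n$. Therefore $\dim\Char(M_{\CC})\le n$. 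If $M=0$ there is nothing to prove; otherwise Bernstein's inequality forces $\dim\Char(M_{\CC})\ge n$, whence $\dim\Char(M_{\CC})=n$ and $M$ is algebraically holonomic.

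I expect the only delicate point to be the bookkeeping involved in the complexification: specifically, verifying that deleting the \emph{real} point $x$ corresponds, after base change, to deleting the single \emph{complex} point $x_{\CC}$, so that the part of $\Char(M_{\CC})$ invisible over $\bfU_{\CC}$ is genuinely confined to the one $n$-dimensional fiber $T^*_{x_{\CC}}\bfX_{\CC}$ rather than to a positive-dimensional locus. Once this is secured, everything reduces to the elementary observation that a union of a set of dimension $\le n$ with a subset of an $n$-dimensional affine space again has dimension $\le n$, combined with Bernstein's inequality; no further geometric input is required.
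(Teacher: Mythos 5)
Your proof is correct and follows essentially the same route as the paper: decompose the characteristic variety into the part lying over $\bfU$ (of dimension at most $n$ by the holonomicity hypothesis) and the part contained in the single cotangent fiber over $x$ (of dimension $n$), then conclude with Bernstein's inequality. The extra care you take with the complexification just makes explicit what the paper handles implicitly via Remark~\ref{rem:holonomicity}.
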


\begin{proof}
   Let $\bfS:=\Char(M)\subset T^*\bfX$ be the characteristic variety of $M$. Let $p:T^*\bfX\to \bfX$ be the projection. We have:
   \[\bfS\subset \blue{(\bfS\cap p^{-1}(\bfU))}\cup T^*_x\bfX=\Char(M|_\bfU)\cup T^*_x\bfX\]
   therefore
   \[\dim(\bfS)\leq \dim(\Char(M|_\bfU)\cup T^*_x\bfX)=\max(\dim(\Char(M|_\bfU)),\dim (T^*_x\bfX))=n.\]
   But by Bernstein inequality $\dim{\bfS}\geq n$, then $\dim{\bfS}=n$ and $M$ is \blue{algebraically} holonomic.
\end{proof}

\begin{cor} \label{cor:alg:to:holon}
    Let $\bfX,X,x,\bfU,U:=\bfU(\R)$ be as in Lemma \ref{lem:holonomicity:open}. Let $u\in \mathscr{D}'(X)$ be a tempered distribution such that $
    u|_U$ is algebraically holonomic. Then $u$ is holonomic.
\end{cor}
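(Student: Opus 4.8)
The plan is to assemble the three preceding lemmas; the Corollary is essentially a bookkeeping step, since all the real content has been placed in the lemmas. First I would exploit the temperedness of $u$: by Lemma~\ref{lem:coherence} the sheaf $M:=uD^a_\bfX$ is a coherent $D^a_\bfX$-module. Next I would identify the restriction $M|_\bfU$ with the right $D^a_\bfU$-module $u|_U D^a_\bfU$ attached to $u|_U$. This identification is immediate from the defining formula for $uD^a_\bfX$ together with the fact that $D^a_\bfU=D^a_\bfX|_\bfU$: for a Zariski open $\bfW\subset\bfU$ both sheaves give $u|_{\bfW(\R)}D^a_\bfX(\bfW)$. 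Since $u|_U$ is algebraically holonomic by hypothesis, it follows that $M|_\bfU$ is a holonomic $D^a_\bfU$-module.

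At this stage $M$ is a coherent $D^a_\bfX$-module whose restriction to $\bfU=\bfX\smallsetminus\{x\}$ is holonomic, which is precisely the hypothesis of Lemma~\ref{lem:holonomicity:open}. Applying that lemma gives that $M=uD^a_\bfX$ is algebraically holonomic, i.e.\ the distribution $u$ is algebraically holonomic on all of $X$. The only genuinely delicate point, namely controlling the dimension of the characteristic variety as one crosses the single excluded point $x$ via the Bernstein inequality, is already packaged inside Lemma~\ref{lem:holonomicity:open}, so it costs nothing here.

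Finally, to pass from algebraic to analytic holonomicity I would invoke Lemma~\ref{lem:alg:to:holon}, which asserts that a tempered algebraically holonomic distribution is holonomic in the sense of Definition~\ref{def:holonomic}. The main subtlety I expect to watch is the hypothesis of that lemma, which is stated for $\bfX$ a Zariski open subset of affine space, whereas in Lemma~\ref{lem:holonomicity:open} (and hence in the Corollary) $\bfX$ is an arbitrary smooth variety. Since analytic holonomicity is a local property, to be checked in a Euclidean neighborhood of each real point, I would reduce to this case by passing to affine charts and working Zariski-locally, using that algebraic holonomicity is Zariski local (as recorded in Lemma~\ref{lem:hol.loc}). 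Assembling these three steps yields that $u$ is holonomic, and no new difficulty arises beyond the reductions already carried out in Lemmas~\ref{lem:coherence}, \ref{lem:holonomicity:open} and \ref{lem:alg:to:holon}.
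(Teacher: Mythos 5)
Your proposal is correct and follows exactly the route the paper intends: temperedness gives coherence of $uD^a_\bfX$ via Lemma \ref{lem:coherence}, the restriction to $\bfU$ is identified with $u|_U D^a_\bfU$ and is holonomic by hypothesis, Lemma \ref{lem:holonomicity:open} then yields algebraic holonomicity of $u$, and Lemma \ref{lem:alg:to:holon} converts this to analytic holonomicity. Your extra care about the affine-space hypothesis in Lemma \ref{lem:alg:to:holon}, handled by passing to affine charts and using Zariski-locality, is a reasonable patch of a point the paper leaves implicit.
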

\begin{remark}
    Note that this corollary uses Lemma \ref{lem:coherence}, so its proof is invalid in the analytic case. This is the reason why we use algebraically holonomic distributions.
    See also Remark \ref{rem:holonomicity}.
\end{remark}
\begin{lemma} \label{lem:holonomicity:additivity}
	\blue{Let $\bfX$ be a smooth real algebraic variety and $X = \bfX(\R)$.}
    Algebraically holonomic functions on $X:=\bfX(\R)$ form a left $D^a(\bfX)$-module and tempered algebraically holonomic distributions on $X:=\bfX(\R)$ form a right $D^a(\bfX)$-module.
\end{lemma}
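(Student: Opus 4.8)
The plan is to follow the pattern of Proposition~\ref{prop:sum:holon}, but to take advantage of the algebraic setting, in which the holonomic $D^a_\bfX$-modules form a Serre subcategory of the coherent $D^a_\bfX$-modules, so that they are closed under subobjects, quotients and finite direct sums (see \cite{HTT}). I would prove the statement for tempered distributions in detail and observe that the argument for functions is word-for-word the same, with right modules replaced by left modules. In each case there are exactly two things to verify: closure under addition, and closure under the action of an operator $P\in D^a(\bfX)$.

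For addition, let $u,v$ be tempered algebraically holonomic distributions. The sum $u+v$ is tempered, and the sum $uD^a_\bfX+vD^a_\bfX$ formed inside the sheaf of distributions is the image of the surjection $uD^a_\bfX\oplus vD^a_\bfX\to uD^a_\bfX+vD^a_\bfX$, $(a,b)\mapsto a+b$, of right $D^a_\bfX$-modules. Since a finite direct sum of holonomic modules is holonomic, the source is holonomic and hence so is its quotient $uD^a_\bfX+vD^a_\bfX$. Then $(u+v)D^a_\bfX$ is a submodule of a holonomic module, so it is holonomic (it is coherent by Lemma~\ref{lem:coherence}, or simply as a submodule of a coherent module over the Noetherian sheaf of rings $D^a_\bfX$); thus $u+v$ is algebraically holonomic.

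For the action of $P$, I would first note that $uP$ is again tempered because an algebraic, hence Nash, differential operator maps Schwartz functions to Schwartz functions (\cite{AG_Sc}); likewise $Pf\in C^\infty(X)$ in the function case. Unwinding the definition of the right action gives $(uP)d=u(Pd)$ for every local section $d$ of $D^a_\bfX$, whence $(uP)D^a_\bfX=u(PD^a_\bfX)\subseteq uD^a_\bfX$. Therefore $(uP)D^a_\bfX$ is a coherent submodule of the holonomic module $uD^a_\bfX$ and is thus holonomic, so $uP$ is algebraically holonomic. In the function case the analogous computation $d(Pf)=(dP)f$ yields $D^a_\bfX(Pf)\subseteq D^a_\bfX f$, again a submodule of a holonomic module. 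Closure under multiplication by complex scalars is immediate, and this completes the module structure in both cases.

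I do not expect a genuine obstacle: once the Serre-subcategory properties of holonomic modules are invoked, the holonomicity assertions are immediate from stability under subobjects, quotients and direct sums. The only points deserving care are bookkeeping ones, namely checking that the operations keep us inside the tempered (resp.\ smooth) class — handled by the stability of Schwartz functions under Nash differential operators — and ensuring coherence throughout, which is exactly what Lemma~\ref{lem:coherence} (and the coherence of $D^a_\bfX f$ recorded in Remark~\ref{rem:holonomicity}) provides.
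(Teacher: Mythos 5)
Your proof is correct and follows essentially the same route as the paper, which simply declares the argument "completely analogous" to Proposition~\ref{prop:sum:holon}: closure under addition via a sub-quotient of the direct sum, and closure under the operator action via a submodule, using coherence (Lemma~\ref{lem:coherence}) and the stability of holonomic modules under subobjects, quotients and finite sums. The only cosmetic difference is that you work directly with the global cyclic modules $uD^a_\bfX$ rather than with the annihilator ideals of Proposition~\ref{prop:sum:holon} (the two are equivalent via Lemma~\ref{lem:hol.eq}, which the paper cites for exactly this translation), so your version bypasses that lemma and is, if anything, slightly more streamlined.
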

\begin{proof}
    The proof is completely analogous to that of Proposition \ref{prop:sum:holon}
    (in-view of Lemma \ref{lem:hol.eq})
\end{proof}



\begin{lemma}\label{lem:product-holonomicity}
 A product of a $C^\infty$ algebraically holonomic function by an algebraically holonomic distribution is algebraically holonomic.
 Similarly, the product of two $C^\infty$ algebraically holonomic functions is a $C^\infty$ algebraically holonomic function.
\end{lemma}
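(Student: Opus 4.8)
The plan is to prove both statements by reducing them to the known stability of holonomic algebraic $D$-modules under external (and ordinary) tensor products, together with restriction to the diagonal. Concretely, for the first statement let $f \in C^\infty(X)$ be algebraically holonomic and let $u \in \mathscr{D}'(X)$ be an algebraically holonomic tempered distribution, where $X = \bfX(\RR)$ for a smooth real algebraic variety $\bfX$. By Lemma \ref{lem:hol.loc} the property is Zariski local, so I may assume $\bfX$ affine. The object I want to control is the right $D^a_\bfX$-module $(fu)D^a_\bfX$ generated by the distribution $fu$. The main idea is that multiplication by $f$ corresponds, at the level of $D$-modules, to pulling back $D^a_\bfX f \boxtimes uD^a_\bfX$ (a holonomic module on $\bfX \times \bfX$, being an external product of holonomics by \cite[Proposition 3.2.2]{HTT}) along the diagonal embedding $\Delta\colon \bfX \to \bfX \times \bfX$, and restriction of holonomic modules to a smooth subvariety preserves holonomicity.

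First I would set up the external product $D^a_\bfX f \boxtimes uD^a_{\bfX}$ on $\bfX \times \bfX$, which is holonomic since each factor is holonomic by hypothesis (using Lemma \ref{lem:hol.eq} to express each factor as a quotient by a holonomic ideal sheaf). This external product is, concretely, generated by the function-times-distribution object $f(x)\otimes u(y)$ on $\bfX \times \bfX$, thought of as a distribution in $y$ depending analytically/algebraically on $x$. Next I would invoke the preservation of algebraic holonomicity under the (non-characteristic, or more robustly, under the inverse-image functor for $D$-modules) restriction to the diagonal $\Delta \subset \bfX \times \bfX$: pulling the external product back along $\Delta$ produces precisely the product $f \cdot u$, and the $D$-module inverse image of a holonomic module is holonomic (in the derived sense; one then takes $\cH^0$ or argues via Kashiwara's estimate on characteristic varieties, as in the dimension count of Lemma \ref{lem:holonomicity:open}). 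Thus $(fu)D^a_\bfX$ is holonomic. The second statement (product of two $C^\infty$ algebraically holonomic functions) is entirely parallel: one forms $D^a_\bfX f_1 \boxtimes D^a_\bfX f_2$ on $\bfX\times\bfX$, which is holonomic, and restricts along the diagonal to obtain $D^a_\bfX(f_1 f_2)$.

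The main obstacle I anticipate is making the diagonal-restriction step rigorous at the level of distributions rather than abstract $D$-modules: one must verify that the analytic operation of multiplying $u$ by $f$ really does correspond to the $D$-module-theoretic inverse image $\Delta^* (D^a_\bfX f \boxtimes uD^a_\bfX)$, and that the relevant non-characteristicity (or transversality) condition ensuring the inverse image stays concentrated in one cohomological degree is satisfied — or, failing transversality, that one still gets holonomicity of $\cH^0$ from Bernstein's inequality and the bound on characteristic varieties under inverse image. An alternative, and perhaps cleaner, route that sidesteps the functoriality bookkeeping is to argue directly with characteristic varieties in the spirit of Lemma \ref{lem:holonomicity:open}: one shows $\Char((fu)D^a_\bfX)$ is contained in the image under the appropriate correspondence of $\Char(D^a_\bfX f) \times_\bfX \Char(uD^a_\bfX)$, whence $\dim \Char((fu)D^a_\bfX) \le n$, and then Bernstein's inequality forces equality and hence holonomicity. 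I would likely present the characteristic-variety estimate as the core computation, since it avoids invoking the full six-functor formalism and matches the elementary dimension-counting style already used in this section.
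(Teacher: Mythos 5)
Your proposal is correct and is, at its core, the same argument as the paper's: both identify the cyclic module generated by $fu$ as a quotient of a tensor product of the two given cyclic holonomic modules and conclude from stability of holonomicity under tensor products and quotients. The only difference is one of packaging: the paper puts a right $D^a_\bfX$-module structure on $D^a_\bfX f\otimes_{O_\bfX} uD^a_\bfX$ via \cite[Proposition 1.2.9]{HTT} and cites \cite[Corollary 3.2.4]{HTT} directly for its holonomicity, whereas you re-derive that corollary by forming the external product on $\bfX\times\bfX$ and restricting to the diagonal (which is exactly how it is proved in HTT, and which makes your worries about non-characteristicity unnecessary, since inverse images of holonomic modules have holonomic cohomologies in general).
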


\begin{proof}
Let $\bfX$ be a smooth real algebraic variety and $X:=\bfX(\RR)$. Let $f\in C^\infty(X)$ be a tempered  function and $u\in \mathscr{D}'(X)$ be a tempered distribution. Assume that both $f$ and $u$ are algebraically holonomic.
Let $M=D^a_\bfX f$ and $N =u D^a_\bfX$. By \cite[Proposition 1.2.9]{HTT} there is a right $D^a_\bfX$-module structure on $M\otimes_{O_\bfX} N$.
It is easy to see that we have an onto map $M\otimes_{O_\bfX} N\to (fu)D_\bfX^a$.
The assertion follows now from the fact that tensor product of algebraic holonomic $D^a_\bfX$-modules is algebraic holonomic (see  \cite[Corollary 3.2.4]{HTT}) and the fact that a quotient of algebraic holonomic $\blue{D^a_\bfX}$-modules is algebraic holonomic (see \cite[Proposition 3.1.2]{HTT}).
The  case of two functions is completely analogous.
\end{proof}

\begin{proposition}[Cartesian product situation]\label{prop:product-situation.alg}
Let $\bfX_1$ and $\bfX_2$ be smooth algebraic varieties defined over $\R$.
Let $u_j$ be a tempered distribution on $X_j:=\bfX_j(\R)$ for $j=1,2$. Suppose that each $u_j$ is algebraically holonomic.
Then the distribution $u_1\otimes u_2$ on $X_1\times X_2$ is also tempered algebraically holonomic.
\end{proposition}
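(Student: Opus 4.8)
The plan is to transpose the proof of the analytic Cartesian-product statement (Proposition~\ref{prop:product-situation}) to the algebraic right-$D$-module setting, the only genuinely new inputs being the temperedness of $u_1\otimes u_2$ and the use of the external product of algebraic $D$-modules. Since algebraic holonomicity is Zariski local (Lemma~\ref{lem:hol.loc}) and the formation of the external product is compatible with restriction to Zariski opens, I would first reduce to the case where $\bfX_1$ and $\bfX_2$ (hence $\bfX_1\times\bfX_2$) are affine: cover $\bfX_1\times\bfX_2$ by products $\bfU\times\bfV$ of affine opens, note that $(u_1\otimes u_2)|_{(\bfU\times\bfV)(\R)}=(u_1|_{\bfU(\R)})\otimes(u_2|_{\bfV(\R)})$, and that each $u_j|$ stays algebraically holonomic by restriction.

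Next I would verify that $u_1\otimes u_2$ is again tempered. As $u_j\in\cS'(X_j)$ and $X_1\times X_2=(\bfX_1\times\bfX_2)(\R)$ is the real locus of a smooth affine variety, this follows from the theory of Schwartz functions on Nash manifolds: the kernel (nuclearity) theorem gives $\cS(X_1\times X_2)\cong\cS(X_1)\mathbin{\hat\otimes}\cS(X_2)$, so that the assignment $\varphi\mapsto\la u_1,\,x_1\mapsto\la u_2,\,x_2\mapsto\varphi(x_1,x_2)\ra\ra$ is a well-defined continuous functional on $\cS(X_1\times X_2)$ (see \cite{AG_Sc}); this requires in particular that $x_1\mapsto\la u_2,\varphi(x_1,\cdot)\ra$ be Schwartz, which is part of that formalism.

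The heart of the argument is the $D$-module computation. Writing $\cI_j:=\mathrm{Ann}(u_j)$, so that $u_jD^a_{\bfX_j}\cong\cI_j\backslash D^a_{\bfX_j}$ as right $D^a_{\bfX_j}$-modules, I would first check, exactly as in the proof of Proposition~\ref{prop:product-situation} and using the defining formula for $u_1\otimes u_2$, that both $\cI_1$ and $\cI_2$ (viewed inside $D^a_{\bfX_1\times\bfX_2}$ via the two coordinate inclusions) annihilate $u_1\otimes u_2$: for $d_1\in\cI_1$ acting on the first factor one has $\la(u_1\otimes u_2)d_1,\varphi\ra=\la u_1,\,d_1(\la u_2,\varphi(\cdot\,,x_2)\ra)\ra=\la u_1d_1,\cdot\ra=0$, and symmetrically for $\cI_2$. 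Denoting by $\cI_1+\cI_2$ the right ideal they generate, this yields a surjection of right $D^a_{\bfX_1\times\bfX_2}$-modules
$$
(\cI_1\backslash D^a_{\bfX_1})\boxtimes(\cI_2\backslash D^a_{\bfX_2})\;\cong\;(\cI_1+\cI_2)\backslash D^a_{\bfX_1\times\bfX_2}\;\twoheadrightarrow\;(u_1\otimes u_2)\,D^a_{\bfX_1\times\bfX_2},
$$
where $\boxtimes$ is the external product of $D$-modules and the first isomorphism is the algebraic analogue of \cite[\S7.5.2]{Bjork} (see \cite[\S1.5]{HTT}).

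Finally, since $u_1D^a_{\bfX_1}$ and $u_2D^a_{\bfX_2}$ are holonomic by hypothesis, their external product is holonomic by \cite[Proposition~3.2.2]{HTT}; hence $(u_1\otimes u_2)D^a_{\bfX_1\times\bfX_2}$, being a quotient of a holonomic module, is holonomic by \cite[Proposition~3.1.2]{HTT}, so $u_1\otimes u_2$ is algebraically holonomic. The main obstacle I anticipate is the sidedness bookkeeping: one must set up the external product and the surjection above consistently for \emph{right} modules and confirm that the presentation $(\cI_1+\cI_2)\backslash D^a\cong\boxtimes$ holds in that setting. The temperedness step, though conceptually the only new analytic ingredient, is essentially a citation to the Schwartz-function formalism, and the holonomicity conclusions are direct invocations of the external-product and quotient stability theorems for algebraic holonomic $D$-modules.
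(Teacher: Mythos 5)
Your proposal is correct and follows essentially the same route as the paper, which simply declares the proof ``analogous to the proof of Proposition \ref{prop:product-situation}'': annihilator ideals from each factor generate an ideal killing $u_1\otimes u_2$, the corresponding cyclic module is a quotient of the external product $\boxtimes$ of the two holonomic modules, and one concludes by stability of algebraic holonomicity under external products and quotients (\cite[Propositions 3.2.2 and 3.1.2]{HTT}). Your added care about the right-module bookkeeping and the temperedness of $u_1\otimes u_2$ via the Schwartz-space kernel theorem fills in details the paper leaves implicit.
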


\begin{proof}
    The proof is analogous to the proof of Proposition \ref{prop:product-situation}.
\end{proof}

\begin{lem}[Very basic case]\label{prop:base-case}
Consider the distribution $u$ on $\RR$ given by the $L^1$ function $\RR\to\CC$ sending  $x$ in $(0,1)$ to
\[
	x^a(\log x) ^b \exp(\mathrm{i} x^c)
\]
and other $x$ to $0$, with complex $a$ with $\Re(a)>-1$, integer $b\ge 0$, and integer $c$.
Then $u$ is tempered and algebraically holonomic.
\end{lem}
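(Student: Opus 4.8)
The plan is to prove that the distribution $u$ on $\RR$ defined by $x^a(\log x)^b\exp(\mathrm{i}x^c)$ on $(0,1)$ (and zero elsewhere) is tempered and algebraically holonomic. Temperedness is easy: the function is bounded by an integrable function near $0$ (since $\Re(a)>-1$ controls the power and the logarithm and the oscillatory exponential are harmless), has compact support in $[0,1]$, and thus is $L^1$; any compactly supported $L^1$ function defines a tempered distribution. So the real content is algebraic holonomicity, and by Lemma \ref{lem:coherence} and the definition it suffices to produce a coherent right ideal $\cI$ of $D^a_\bfX$ (for a suitable Zariski open $\bfX$ in the affine line) annihilating $u$ with holonomic quotient, or equivalently to exhibit enough algebraic differential operators killing $u$.

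The strategy is to find a single nonzero algebraic differential operator $P\in D^a_\bfX$ with $uP=0$ on the open set $\bfU=\GM=\AA^1\smallsetminus\{0\}$ (i.e. away from the origin), which immediately forces $\dim\Char(uD^a)=1=\dim\bfU$ there, and then to handle the origin and the endpoint $x=1$ via Lemma \ref{lem:holonomicity:open} and Corollary \ref{cor:alg:to:holon}, whose whole point (as the accompanying remark stresses) is to upgrade holonomicity off a point to holonomicity everywhere in the algebraic setting. On $(0,1)$ the function $v(x)=x^a(\log x)^b\exp(\mathrm{i}x^c)$ is a classical solution of an ODE with \emph{algebraic} (in fact rational) coefficients: since $v'/v = a/x + b/(x\log x) + \mathrm{i}c\,x^{c-1}$, repeated logarithmic differentiation against the algebraic functions $x$ and $x^c$ produces a finite-dimensional space of functions closed under $x\frac{d}{dx}$ over the field of rational functions in $x$. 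I would make this precise by observing that $v$ lies in the $\CC(x)$-span of the finitely many functions $x^a(\log x)^j\exp(\mathrm{i}x^c)$ for $0\le j\le b$, and that $\frac{d}{dx}$ maps this span into itself after inverting $x$; hence the left ideal of operators annihilating $v$ in $D^a_\GM$ has a quotient of dimension at most $b+1$, which is holonomic.

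The one subtlety is that algebraic holonomicity is defined via the \emph{distribution} $u$ and its right $D^a_\bfX$-module $uD^a_\bfX$, not via the classical solution $v$ on $(0,1)$, so I must pass from "$P$ kills $v$ pointwise on $(0,1)$" to "$uP=0$ as distributions" — and $u$ also has the cutoff across $x=1$ and the jump structure at the boundary. For the endpoint $x=1$, which lies in $\bfU=\GM(\RR)$, the function $v$ and its derivatives have one-sided limits, so applying a differential operator $P$ annihilating $v$ on $(0,1)$ will produce, in the distributional sense, only delta-type boundary terms supported at $\{1\}$; I would either arrange $P$ to kill these terms as well (by taking a suitable power, using that $\delta_1$ and its derivatives are visibly holonomic, together with Lemma \ref{lem:holonomicity:additivity}), or restrict further to $\AA^1\smallsetminus\{0,1\}$ and then glue back the two removed points $x=0,1$ by applying Corollary \ref{cor:alg:to:holon} twice. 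The main obstacle I expect is precisely this careful bookkeeping of the distributional boundary terms at $x=1$ (and the integrability/regularity check at $x=0$ needed to justify that $uP$ has support in the finite set of removed points), rather than the construction of the operator itself; once the support is known to be a finite set of points off which $u$ is holonomic, Lemma \ref{lem:holonomicity:open} and Corollary \ref{cor:alg:to:holon} finish the argument.
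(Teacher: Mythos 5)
Your proposal is correct and follows essentially the same route as the paper: both establish algebraic holonomicity away from $\{0,1\}$ by exhibiting algebraic (rational-coefficient) differential operators annihilating the classical function — the paper via three elementary factors ($x^a$, $\log x$, $e^{\mathrm{i}x^c}$) combined through Lemma \ref{lem:product-holonomicity}, you via the equivalent observation that the $\CC(x)$-span of $x^a(\log x)^j e^{\mathrm{i}x^c}$, $0\le j\le b$, is stable under $\frac{d}{dx}$ — and then glue back the removed points using Lemma \ref{lem:holonomicity:open} and Corollary \ref{cor:alg:to:holon}, which is exactly your second option for handling $x=1$. Your first option (killing the delta terms at $x=1$ by "taking a suitable power" of $P$) would not work as literally stated, but since your fallback of deleting both $0$ and $1$ and applying the corollary twice is the paper's argument, the proof goes through.
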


\begin{proof}
$ $
\begin{enumerate}[Step 1.]
    \item Proof that $u$ is algebraically holonomic outside $\{0,1\}$.\\
    Let $v\in C^\infty(\R\smallsetminus \{0,1\})$ be a function such that $u=v \mathrm{d}x$, where $\mathrm{d}x$ is the Haar measure on the line.
    By Lemma \ref{lem:product-holonomicity} it is enough to show that $v$ is algebraically holonomic.
    \begin{enumerate}[{Case} 1.]
    \item $a=1; b=c=0$\\
    Obvious.
    \item $b=1; a=c=0$\\
    Follows from the relation:
    \[\left (\frac{d}{dx}x\frac{d}{dx}\right)(\log(x))=0.\]
    \item $a=b=0$\\
    Follows from the relation:
    \[\left(x^{1-c}\frac{d}{dx}-\mathrm{i}c\right)(e^{\mathrm{i}x^c})=0.\]
    \item General case.\\
    Follows from the previous cases using Lemma \ref{lem:product-holonomicity}.
\end{enumerate}
    \item Proof that $u$ is algebraically holonomic.\\
    Follow from the previous step using Corollary \ref{cor:alg:to:holon}.
\end{enumerate}

That $u$ is tempered is clear.
\end{proof}

\begin{prop}[Special base case]
\label{prop:base-case-u}
The distribution $u$ on $\RR^n$ given by the $L^1$ function $\RR^n\to\CC$ sending $x$ in $(0,1)^n$ to
\begin{equation}\label{eq:u1}
	(\prod_{j=1}^n x_j^{a_j}) (\prod_{j=1}^n (\log x_j)^{b_j}  ) \exp(\mathrm{i} 
\prod_{j=1}^n x_j^{c \delta_j })
\end{equation}
and  other $x$ to $0$, with complex $a_j$ with $\Re(a_j)>-1$, integers $b_j\ge 0$, integer $c<0$, and integers $\delta_j\in \{0,1\}$,
is tempered and algebraically holonomic.
\end{prop}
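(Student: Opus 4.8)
The plan is to reduce Proposition \ref{prop:base-case-u} to the one-dimensional case treated in Lemma \ref{prop:base-case}, using the product structure of the function and the multiplicative structure of the oscillatory factor. The first observation is that the function in \eqref{eq:u1} is \emph{almost} a tensor product of the one-variable functions appearing in Lemma \ref{prop:base-case}, except for the exponential factor $\exp(\mathrm{i}\prod_j x_j^{c\delta_j})$, which couples the variables together. So my strategy is to separate the genuinely tensorial part from this coupling exponential and treat each on its own, then recombine via the product results for algebraic holonomicity.

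First I would dispose of the variables $j$ for which $\delta_j=0$. For those indices the exponential does not involve $x_j$, so the dependence on $x_j$ is exactly of the form $x_j^{a_j}(\log x_j)^{b_j}$ on $(0,1)$ and zero elsewhere; by Lemma \ref{prop:base-case} (with $c=0$) each such factor is a tempered algebraically holonomic distribution on $\RR$. By Proposition \ref{prop:product-situation.alg} these tensor together with the remaining factors, so it suffices to treat the case where all $\delta_j=1$, i.e.\ the exponential is $\exp(\mathrm{i}\prod_{j=1}^n x_j^{c})$ with a single integer $c<0$ shared by all variables.

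For the remaining, fully coupled case the key step is to handle the function
$$
w(x)=\Big(\prod_{j=1}^n x_j^{a_j}\Big)\Big(\prod_{j=1}^n (\log x_j)^{b_j}\Big)\exp\!\Big(\mathrm{i}\prod_{j=1}^n x_j^{c}\Big)
$$
on $(0,1)^n$. Here I would exhibit explicit first-order differential operators annihilating $w$, in the spirit of the relations used in Lemma \ref{prop:base-case}. For each coordinate direction the operator $x_j\partial_{x_j}$ acts on the power-and-log factors by the Euler-type relations $(x_j\partial_{x_j})(x_j^{a_j})=a_j x_j^{a_j}$ and $(x_j\partial_{x_j})(\log x_j)=1$, while on the exponential it contributes $\mathrm{i}c\prod_k x_k^{c}$, which is the \emph{same} for every $j$. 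Consequently the differences $x_j\partial_{x_j}-x_{j'}\partial_{x_{j'}}$ kill the exponential outright and reduce to operators acting only on the tensorial power-log part, which one annihilates by the iterated relations of Lemma \ref{prop:base-case}. This produces enough operators, together with one operator capturing the oscillation (built from $x_1^{1-c}\partial_{x_1}$ as in Case~3 of Lemma \ref{prop:base-case}), to generate a holonomic system on the open Nash set $(0,1)^n$, so that $w\,\mathrm{d}x$ is algebraically holonomic there by Lemma \ref{lem:product-holonomicity} and Lemma \ref{lem:hol.loc}. Formally, I would phrase this by writing $w=\big(\prod_j x_j^{a_j}(\log x_j)^{b_j}\big)\cdot \exp(\mathrm{i}\prod_j x_j^c)$ and applying Lemma \ref{lem:product-holonomicity}, having separately shown the power-log product and the single coupled exponential are each $C^\infty$ algebraically holonomic on $(0,1)^n$.

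The main obstacle, exactly as in Lemma \ref{prop:base-case}, is passing from algebraic holonomicity on the open cube $(0,1)^n$ to algebraic holonomicity across the boundary $[0,1]^n\smallsetminus(0,1)^n$, where $u$ has its jump and possible blow-up. For this I would invoke the boundary-filling argument of Corollary \ref{cor:alg:to:holon}, which upgrades holonomicity on the complement of a point (and, iterated over coordinate hyperplanes, on the complement of the boundary stratification) to holonomicity on the whole affine chart, using crucially the coherence provided by Lemma \ref{lem:coherence}. The delicate point is that Corollary \ref{cor:alg:to:holon} is stated for the complement of a single point, so I would need to apply it inductively along the stratification of the boundary of $(0,1)^n$ by coordinate faces, at each stage using that the distribution is $L^1$ (since $\Re(a_j)>-1$) so that it genuinely defines a tempered distribution on all of $\RR^n$ whose restriction to the open stratum is already known to be algebraically holonomic. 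Finally I would record that $u$ is tempered, which is immediate from compact support in $[0,1]^n$ and local integrability.
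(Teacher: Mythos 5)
Your reduction of the indices with $\delta_j=0$ via tensor products matches the paper's Case 1, and your analysis on the open stratum is essentially sound: the operators $x_j\partial_{x_j}-x_{j'}\partial_{x_{j'}}$ together with $\bigl(\prod_k x_k^{-c}\bigr)x_1\partial_{x_1}-\mathrm{i}c$ do annihilate the coupled exponential, and combined with Lemma \ref{lem:product-holonomicity} they give algebraic holonomicity of $u$ on the Zariski open set $\{\prod_j x_j(x_j-1)\neq 0\}$ (note that you must work on a Zariski open set, not the Nash cube $(0,1)^n$, since the framework of $D^a_{\bfX}$-modules and Lemma \ref{lem:hol.loc} is Zariski-local; the indicator of $(0,1)^n$ is locally constant there, so this is only a phrasing issue).

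The genuine gap is in your final step. Corollary \ref{cor:alg:to:holon} and Lemma \ref{lem:holonomicity:open} remove only a \emph{single point}: the proof bounds $\dim\Char(M)$ by $\max\bigl(\dim\Char(M|_{\bfU}),\dim T^*_x\bfX\bigr)=\max(n,n)=n$ and then invokes the Bernstein inequality. If you try to ``apply it inductively along the stratification of the boundary by coordinate faces,'' the relevant stratum $Z$ has dimension $d\ge 1$, and the same bound becomes $\max\bigl(n,\dim p^{-1}(Z)\bigr)=2n-d>n$, which gives nothing. So the extension of holonomicity across the positive-dimensional faces $\{x_j=0\}$ and $\{x_j=1\}$ --- which the paper explicitly identifies as the key point of the proposition --- does not follow from the results you cite, and no replacement argument is supplied. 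The paper's actual proof avoids this entirely: it inducts on $n$, reduces by Lemmas \ref{lem:hol.loc} and \ref{lem:holonomicity:open} to the Zariski opens $\{x_j\neq 0\}$ (whose union misses only the origin, a point), and on $\{x_2\neq 0\}$ uses the substitution $\phi(x)=(x_1x_2,x_2,\ldots,x_n)$ to pull back the already-known Case-1 distribution, producing the fully coupled exponential; after multiplying by $x_2^{-a_1}$ and expanding $(\log x_1+\log x_2)^{b_1'}$ binomially, the target distribution is recovered by an inner induction on $b_1'$ and the additivity of Lemma \ref{lem:holonomicity:additivity}. Some such mechanism for crossing the boundary hypersurfaces is indispensable, and your proposal does not contain one.
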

\begin{proof}[Proof of Proposition \ref{prop:base-case-u}]
$ $
We prove the statement by induction on $n$.
We denote a distribution defined as $u$ but with other constants $a_j',b_j', \delta_j', c'$ by $u_{\{a_j',b_j', \delta_j', c'\}}$.
The base of the induction follows from  Lemma \ref{prop:base-case}. We will assume that $n\geq 2$.
\begin{enumerate}[{Case} 1.]
    \item $\delta_j=0$ for some $j$.\\
We are done by induction and Proposition \ref{prop:product-situation.alg}.
\item $\delta_j=1$ for all $j$.



By Lemmas \ref{lem:hol.loc} and \ref{lem:holonomicity:open}
it is sufficient to prove holonomicity in the open set $\bfU_j\subset \blue{\A^n_{\RR}} $ given by $x_j\neq 0$ for each $j$.  Up to reordering the coordinates, we may suppose that $j=2$.
We will show that for any tuple $b_j'$ the distribution $$u_{\{a_j,b_j',\delta_j, c\}}|_{\bfU_2(\R)}$$ is holonomic. We will do it by induction on $b_1'$.
    Consider the map $\phi:\bfU_2\to \bfU_2$ given by $$\phi(x_1,\dots,x_n)= (x_1x_2,x_2,\dots,x_n).$$
    Let $$u':=u_{\{a_j,b_j', \delta_j',c\}}|_{\bfU_2(\R)},$$ where $\delta_2'=0$ and $\delta_j'=1$ \blue{if} $j\neq 2$.  By Case 1, the distribution $u'$ is algebraically holonomic.
    So, $\phi^*(u')$ is algebraically holonomic.
The distribution $\phi^*(u')$ is given in the similar way as $u$ but with the formula:
\begin{equation}\label{eq:u}
	(x_1^{a_1}x_2^{a_2+a_1}\prod_{j=3}^n x_j^{a_j}) (\log x_1+\log x_2)^{\blue{b_1'}}(\prod_{j=2}^n (\log x_j)^{\blue{b_j'}})
\exp(\mathrm{i}\prod_{j=1}^n x_j^{c}).
\end{equation}
It is easy to see that $x_2^{-a_1}$ is an algebraically holonomic function on $\bfU_2(\R)$. Thus, by Lemma \ref{lem:product-holonomicity}, $x_2^{-a_1}\phi^*(u')$ is algebraically holonomic.

    By the binomial formula, we obtain
    $$x_2^{-a_1}\phi^*(u')=\sum_{k=0}^{\blue{b_1'}} \blue{\binom{b_1'}{k}}u_{\{a_j,b_{jk}', \delta_j,c\}}|_{\bfU_2(\R)}$$
    where: \blue{for each $k$, we have $b'_{1k}=b_1'-k$, $b'_{2k}=b_2'+k$, and $b'_{jk}=b_j'$ for each $j>2$.}

    By the induction assumption \blue{on $b_1'$} and by summation (see Lemma \ref{lem:holonomicity:additivity}), we get that
    $$\sum_{k=1}^{\blue{b_1'}} \blue{\binom{b_1'}{k}}u_{\{a_j,b_{jk}', \delta_j,c\}}|_{\bfU_2(\R)}$$ is algebraically holonomic.
We obtain:
    \[
	    (u_{a_j,b_j',\delta_j,c})|_{\bfU_2(\R)}=
	    x_2^{-a_1}\phi^*(u')-\sum_{k=1}^{\blue{b_1'}} \blue{\binom{\blue{b_1'}}{k}}u_{\{a_j,b_{jk}', \delta_j,c\}}|_{\bfU_2(\R)}.
    \]
Applying Lemma \ref{lem:holonomicity:additivity} again, we get that $(u_{a_j,b_j',\delta_j,c})|_{\bfU_2(\R)}$ is algebraically holonomic, as requested.





\end{enumerate}
That $u$ is tempered is again clear.
\end{proof}

\begin{proof}[Proof of Proposition \ref{cor:base-case-u}]
The special case of Proposition \ref{cor:base-case-u} which has furthermore all $c_j$ equal to $\delta_jc$ for some integer $c\le 0$ and some $\delta_j\in\{0,1\}$ follows from Proposition \ref{prop:base-case-u} and Lemma \ref{lem:alg:to:holon}. The general case is reduced to this situation by Propositions \ref{prop:power-map:holon} and \ref{prop:push:holon}.
\end{proof}

\section{Distributions of $\cCexp$-class}\label{sec:uCexp}

In this section, we introduce the notion of $\cCexp$-class distributions on $\RR^n$, and more generally on an open set $U\subset \R^n$. We do this by considering the continuous wavelet transforms of the distributions. 

	\begin{defn}[Mother wavelet] \label{notation:motherwavelet}
		Let $\psi:\RR\to\CC$ be a $\cCexp$-class function which is $C^\infty$ and compactly supported and with $\int_{\RR} \psi(t)dt = 1$, $\psi \geq 0$, and $\psi(x)=\psi(-x)$, for all $x \in \RR$. Suppose furthermore that the support of $\psi$ is subanalytic. Such a function exists by Proposition \ref{prop:cutoffCexp} and its proof.
	     We define
	     $$\Psi(x_1,\dots,x_n)=\prod_{i=1}^{n} \psi(x_i)$$
and call it a mother wavelet on $\RR^n$.

	\end{defn}

From here on, we fix a function $\psi$ and the corresponding mother wavelet $\Psi$ on $\RR^n$, as in Definition \ref{notation:motherwavelet}.

%
%
	\begin{defn}[Continuous wavelet transform]\label{defn:wevelet}
Let $u$ be a distribution on an open set $U\subset\RR^n$.
By the continuous wavelet transform of $u$ with respect to the mother wavelet $\Psi$ we mean the complex-valued function on
$\RR^n \times \RR_{>0}$, denoted by $\cW(u)$ or $\cW_{\Psi}(u)$, sending $(x,\lambda)$ with $\lambda>0$ to $u$ evaluated in the test function
$$
y \mapsto \Psi\left(\lambda\cdot (x-y)\right)
$$
if the support of that test function is included in $U$, and to $0$ otherwise.
\end{defn}

	\begin{defn}[Distributions of $\cCexp$-class]\label{def:distrCexp}
		A distribution $u$ on an open set $U\subset \RR^n$ is a distribution of $\cCexp$-class if its continuous wavelet transform  $\cW(u)$ with respect to the mother wavelet $\Psi$
		is a $\cCexp$-class function. Write $\mathscr{D}'_{\cCexp}(U)$ for the $\cCexp$-class distributions on $U$.
	\end{defn}

Note that by the definitions, if $u$ is a distribution of $\cCexp$-class on an open set $U\subset \R^n$, then $U$ is a subanalytic set.
We show first that Definition \ref{def:distrCexp} is independent of the choice of mother wavelet in Definition \ref{notation:motherwavelet}, see Corollary \ref{prop:ind}. We also find that the Fourier transform of a tempered distribution of $\cCexp$-class is again of $\cCexp$-class, see Proposition \ref{prop:Four}. Any locally integrable $\cCexp$-class function is a $\cCexp$-class distribution, see Proposition \ref{prop:L1}.


\begin{prop}\label{prop:gen}\label{prop:Julia}
Let $u$ be a distribution of $\cCexp$-class on an open set $U\subset \R^n$.
Consider a collection $\phi_z$ of functions in $C^\infty_c(U)$, for $z\in \RR^m$, such that the total family
$$
\RR^m\times U \to\CC: (z,x) \mapsto \phi_z(x)
$$
is of $\cCexp$-class. Then the function
$
z\mapsto   \la u,\phi_z \ra
$
is of $\cCexp$-class.

If $u$ is furthermore tempered, if $\psi_z$ is in $\cS(U)$ for each $z\in\R^m$ and if the total family
$$
\RR^m\times U \to\CC: (z,x) \mapsto \psi_z(x)
$$
is of $\cCexp$-class, then the function
$
z\mapsto  \la u,\psi_z \ra
$
is of $\cCexp$-class.
\end{prop}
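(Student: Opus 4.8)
The plan is to reconstruct the pairing $\la u,\phi_z\ra$ from the continuous wavelet transform $\cW(u)$ by letting the scale parameter $\lambda$ tend to $+\infty$, exploiting that the mother wavelet $\Psi$ has integral $1$ and is nonnegative, so that the rescalings $\Psi_\lambda(y):=\lambda^n\Psi(\lambda y)$ form an approximate identity. First I would record the elementary identity relating $\cW(u)$ to a smoothing of $u$: setting $u_\lambda:=u*\Psi_\lambda$, which is a $C^\infty$ function since $\Psi_\lambda\in C_c^\infty(\RR^n)$, one has, because $\Psi$ is even, that $u_\lambda(x)=\lambda^n\cW(u)(x,\lambda)$ at every $x$ for which the support of $y\mapsto\Psi(\lambda(x-y))$ is contained in $U$. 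Since $\int\Psi_\lambda=1$, for any fixed test function $\phi\in C_c^\infty(U)$ the convolutions $\Psi_\lambda*\phi$ converge to $\phi$ in $C_c^\infty(U)$ as $\lambda\to+\infty$ (uniformly with all derivatives, with supports eventually inside a fixed compact subset of $U$), and hence $\la u_\lambda,\phi\ra=\la u,\Psi_\lambda*\phi\ra\to\la u,\phi\ra$. Applied to $\phi=\phi_z$, this gives the reconstruction $\la u,\phi_z\ra=\lim_{\lambda\to+\infty}\la u_\lambda,\phi_z\ra$ for each $z$.

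Next I would package the smoothed pairing as a parametric integral and show it is of $\cCexp$-class. Define
\[
G(z,\lambda):=\lambda^n\int_{\RR^n}\cW(u)(x,\lambda)\,\phi_z(x)\,\mathrm{d}x .
\]
The integrand $(z,\lambda,x)\mapsto\lambda^n\cW(u)(x,\lambda)\phi_z(x)$ is of $\cCexp$-class on $\RR^m\times\RR_{>0}\times\RR^n$: the factor $\lambda^n$ is subanalytic, $\cW(u)$ is of $\cCexp$-class by the hypothesis that $u$ is of $\cCexp$-class (Definition \ref{def:distrCexp}), the family $(z,x)\mapsto\phi_z(x)$ is of $\cCexp$-class by assumption, and both factors remain of $\cCexp$-class after pulling back along the (subanalytic) coordinate projections by Lemma \ref{prop:compCexp}, so their product lies in the ring $\cCexp(\RR^m\times\RR_{>0}\times\RR^n)$. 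Since for each $(z,\lambda)$ the function $\phi_z$ has compact support, the $x$-integral is finite, so Theorem \ref{thm:stab} yields $G\in\cCexp(\RR^m\times\RR_{>0})$. Moreover, for each fixed $z$ the support $K_z:=\Supp\phi_z$ is a compact subset of $U$, so for all sufficiently large $\lambda$ the support condition of Definition \ref{defn:wevelet} is met for every $x\in K_z$, whence $G(z,\lambda)=\la u_\lambda,\phi_z\ra$ for such $\lambda$.

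It then remains to take the limit inside the $\cCexp$-class. Combining the previous two steps, for each $z$ the limit $\lim_{\lambda\to+\infty}G(z,\lambda)$ exists and equals $\la u,\phi_z\ra$; since $G$ is of $\cCexp$-class, Proposition \ref{prop:limits} shows that the resulting limit function $z\mapsto\la u,\phi_z\ra$ is of $\cCexp$-class, which is the first assertion. For the tempered case I would run exactly the same argument with $C_c^\infty(U)$ replaced by $\cS(U)$: the smoothings still satisfy $u_\lambda=u*\Psi_\lambda$ and $u*\Psi_\lambda\to u$ in $\cS'(U)$, and the corresponding $G(z,\lambda)$ is again obtained as an $x$-integral of a $\cCexp$-class integrand, hence of $\cCexp$-class by Theorem \ref{thm:stab}, and Proposition \ref{prop:limits} finishes as before.

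I expect the main obstacle to be the bookkeeping around the support cutoff built into Definition \ref{defn:wevelet} together with the convergence of the approximate identity: one must check that $\cW(u)(x,\lambda)$ genuinely computes the smoothing $u_\lambda(x)$ on the relevant region for large $\lambda$ (so that $G(z,\lambda)$ agrees with $\la u_\lambda,\phi_z\ra$ there, even though the cutoff makes them differ for small $\lambda$), and, in the tempered case, to justify convergence of the $x$-integral over all of $\RR^n$ — which uses the polynomial growth of $\cW(u)$ afforded by temperedness against the rapid decay of $\psi_z$ — together with the convergence $u*\Psi_\lambda\to u$ in $\cS'(U)$. The passage to the limit, by contrast, is handled uniformly and cleanly by Proposition \ref{prop:limits}, so no quantitative control in $z$ is needed.
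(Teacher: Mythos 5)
Your proposal is correct and follows essentially the same route as the paper's proof: both reconstruct $\la u,\phi_z\ra$ as the limit of the pairing of $\phi_z$ against the mollification $u*\Psi_\lambda$ (the paper writes $\lambda=1/\eps$), identify that pairing with a parametric integral of $\cW(u)$ against $\phi_z$ using the evenness of $\Psi$, apply Theorem \ref{thm:stab} to see this integral is of $\cCexp$-class, and conclude with Proposition \ref{prop:limits}. Your handling of the support cutoff for large $\lambda$ (pointwise in $z$, which suffices since Proposition \ref{prop:limits} needs no uniformity) matches the paper's ``for small enough $\eps$, depending on $z$,'' and your remarks on the tempered case are if anything more explicit than the paper's one-line ``one proceeds similarly.''
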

\begin{proof} 
	For any $\eps>0$, consider the function
	$$\Psi_{\eps}:x \mapsto \frac{1}{\eps^n}\Psi\left( \frac{x}{\eps} \right).$$
	For any $z \in \RR^m$, the function $\Psi_{\eps}*\phi_z$ belongs to $C_{c}^{\infty}(\RR^n)$.
	Furthermore recall that
	$$\supp\left( \Psi_{\eps}*\phi_z \right) \subset \supp\left( \Psi_{\eps} \right)+\supp\left( \phi_z \right).$$
	Note that $\supp\left(\Psi_{\eps}\right)\to \{0\}$ when $\eps \to 0$, so there is $\eps_0>0$, depending  on $z$, such that for any
        $\eps \in ]0,\eps_0[$,
	$$\supp\left( \Psi_{\eps}*\phi_z \right) \subset \supp\left( \Psi_{\eps} \right)+\supp\left( \phi_z \right) \subset U.$$
	meaning that the function $\Psi_{\eps}*\phi_z$ belongs to $C_{c}^{\infty}(U)$. 
	
 	Using a classical regularization result and continuity of distributions (see for instance \cite[Theorems 1.3.2 and 2.1.4]{Hormander83}), we have, for any $z \in \RR^m$, the following limit behavior
	\begin{equation} \label{eq:limreg}
		\la u,\Psi_{\eps}*\phi_z \ra \underset{\eps \to 0}{\to} \la u,\phi_z \ra.
	\end{equation}
	By definition of the convolution product we have for small enough $\eps$, depending on  $z$,
	$$\la u,\Psi_{\eps}*\phi_z \ra = \la u,y\mapsto \la \phi_z,t\mapsto 1_{\Supp{\phi_z}}(t)\Psi_{\eps}(y-t)\ra \ra,$$
	so that
	$$\la u,\Psi_{\eps}*\phi_z \ra = \la u\otimes \phi_z,(y,t)\mapsto 1_{\Supp{\phi_z}}(t)\Psi_{\eps}(y-t) \ra =
	\la\phi_z,t\mapsto \la u,y\mapsto \Psi_{\eps}(y-t) \ra \ra.$$
	Thus, (using that $\psi(-x)=\psi(x)$), we have for small enough $\eps$, depending on  $z$,
	\begin{equation} \label{eq:gen1} \la u,\Psi_{\eps}*\phi_z \ra = \frac{1}{\eps^n} \la \cW(u)(1/\eps,t),\phi_z \ra.
	\end{equation}
	As $u$ is of $\cCexp$-class, $(\eps,t)\mapsto \cW(u)(1/\eps,t)$ is a $\cCexp$-class function. Also $(z,t)\mapsto \phi_z(t)$ is a $\cCexp$-function (by assumption), and hence, by the integration result from Theorem \ref{thm:stab}, we deduce that
	$$(\eps,z)\mapsto \frac{1}{\eps^n} \la \cW(u)( 1/\eps,t),\phi_z \ra
	$$
	is a $\cCexp$-class function on $\R_{>0}\times \R^m$. Now, by (\ref{eq:limreg}) and (\ref{eq:gen1}), we have for each $z$
\begin{equation} \label{eq:gen2}
	\frac{1}{\eps^n} \la \cW(u)(1/\eps,t),\phi_z \ra  \underset{\eps \to 0}{\to} \la u,\phi_z \ra.
\end{equation}
	Since limits of $\cCexp$-class functions are still of $\cCexp$-class by Proposition \ref{prop:limits}, we deduce from (\ref{eq:gen2}) that $z\mapsto \la u,\phi_z \ra$ is of $\cCexp$-class.
In the tempered case one proceeds similarly.
\end{proof}

\begin{cor}\label{prop:ind}
Definition \ref{def:distrCexp} is independent of the choice of mother wavelet $\Psi$. That is, if $\psi'$ is any other $\cCexp$-class function which is $C^\infty$ and compactly supported and with $\int_{\RR} \psi'(t)dt = 1$, $\psi' \geq 0$, $\psi'(x)=\psi'(-x)$ and with subanalytic support, and if we define the mother wavelet $\Psi'$ on $\R^n$ corresponding to $\psi'$ (instead of $\psi$), then the following holds for any distribution $u$ on $U$. The continuous wavelet transform with respect to $\Psi$ is a $\cCexp$-class function if and only if the continuous wavelet transform with respect to $\Psi'$ is a $\cCexp$-class function.
\end{cor}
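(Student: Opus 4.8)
The plan is to deduce Corollary \ref{prop:ind} directly from Proposition \ref{prop:gen}, exploiting the symmetry between the two mother wavelets. The key observation is that the continuous wavelet transform $\cW_{\Psi'}(u)(x,\lambda)$ is, by definition, the value $\la u, \phi_{(x,\lambda)}\ra$ where $\phi_{(x,\lambda)}(y) = \Psi'(\lambda(x-y))$ whenever this test function is supported in $U$, and $0$ otherwise. So the whole point is to recognize $\cW_{\Psi'}(u)$ as an instance of the parametric pairing $z\mapsto \la u,\phi_z\ra$ studied in Proposition \ref{prop:gen}, with the parameter being $z=(x,\lambda)\in\R^n\times\R_{>0}\subset\R^{n+1}$.

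First I would check the hypothesis of Proposition \ref{prop:gen}: that the total family $((x,\lambda),y)\mapsto \Psi'(\lambda(x-y))$ (suitably cut off to guarantee support inside $U$) is of $\cCexp$-class as a function on $(\R^n\times\R_{>0})\times U$. Since $\psi'$ is of $\cCexp$-class and $C^\infty$ with subanalytic support, and since the map $((x,\lambda),y)\mapsto \lambda(x-y)$ is subanalytic (polynomial, in fact), the composition $\Psi'(\lambda(x-y))=\prod_i \psi'(\lambda(x_i-y_i))$ is of $\cCexp$-class by the precomposition stability in Lemma \ref{prop:compCexp}. The support condition ``included in $U$'' defines a subanalytic locus in the parameter-and-$y$ space (as $U$ is subanalytic and $\supp\psi'$ is subanalytic), so multiplying by its characteristic function keeps us in the $\cCexp$-class. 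Assuming $u$ is of $\cCexp$-class with respect to $\Psi$, Proposition \ref{prop:gen} then yields immediately that $(x,\lambda)\mapsto\la u,\phi_{(x,\lambda)}\ra=\cW_{\Psi'}(u)(x,\lambda)$ is of $\cCexp$-class. This establishes one direction of the ``if and only if''; the reverse direction is obtained by swapping the roles of $\psi$ and $\psi'$, which is legitimate since the hypotheses on $\psi$ and $\psi'$ are identical and symmetric.

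One technical point I would be careful about is the parameter domain: Proposition \ref{prop:gen} is stated for parameters $z\in\R^m$, whereas here the natural parameter $\lambda$ ranges over $\R_{>0}$ and enters the definition of $\cW$. This is not a real obstacle, since $\R_{>0}$ (or $(x,\lambda)$-space) is a subanalytic subset of $\R^{n+1}$ and one may either restrict the $\cCexp$-class family to this subanalytic set or reparametrize $\lambda=\exp(s)$ (a $\cCexp$-class, indeed subanalytic on bounded pieces, change that preserves the class), so that the hypotheses of Proposition \ref{prop:gen} apply verbatim. The main, and essentially only, obstacle is the careful bookkeeping of the support/cutoff condition: one must ensure that the indicator of the set where $\supp\Psi'(\lambda(x-\cdot))\subset U$ is genuinely subanalytic and that multiplying by it does not break the $\cCexp$-class membership. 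Once that is verified, the corollary is a direct corollary of Proposition \ref{prop:gen}, applied once in each direction.
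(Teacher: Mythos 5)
Your proposal is correct and follows essentially the same route as the paper, which likewise deduces the corollary directly from Proposition \ref{prop:gen} applied to the parameter $z=(x,\lambda)\in\RR^n\times\RR_{>0}$ and the family $\phi_{(x,\lambda)}:y\mapsto\Psi'(\lambda(x-y))$, with the reverse direction obtained by symmetry. Your extra care about the subanalyticity of the support cutoff and the parameter domain is a reasonable elaboration of details the paper leaves implicit.
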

\begin{proof}
	Follows at once from Proposition \ref{prop:gen} applied to $z=(x,\lambda) \in \RR^n\times \RR_{>0}$ and $\phi_{(x,\lambda)}:y \mapsto \psi'(\lambda.(x-y))$ and $\phi:( (x,\lambda),y)\mapsto \phi_{(x,\lambda)}(y)$ which are $\cCexp$-functions.
\end{proof}

\begin{prop}[Fourier transforms]\label{prop:Four}
Let $u$ be a distribution of $\cCexp$-class on $\RR^n$. Suppose that $u$ is tempered. Then the Fourier transform of $u$ is also a tempered distribution of $\cCexp$-class.
\end{prop}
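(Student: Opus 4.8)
The plan is to show that the Fourier transform commutes with the wavelet transform up to an explicit, $\cCexp$-controllable change of variables, so that the wavelet transform of $\widehat{u}$ can be expressed directly in terms of quantities already known to be of $\cCexp$-class. Concretely, I would use Proposition \ref{prop:gen} in its tempered form: since $u$ is tempered and of $\cCexp$-class, it suffices to exhibit a $\cCexp$-class family of Schwartz functions $\psi_z$ (parametrized by $z=(x,\lambda)\in\RR^n\times\RR_{>0}$) such that $\cW(\widehat u)(x,\lambda)=\langle u,\psi_z\rangle$, and then conclude that $(x,\lambda)\mapsto\cW(\widehat u)(x,\lambda)$ is of $\cCexp$-class, which is exactly what is needed for $\widehat u$ to be of $\cCexp$-class.

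First I would compute, using the defining adjunction $\langle\widehat u,f\rangle=\langle u,\widehat f\rangle$ for tempered distributions, the value of the wavelet transform $\cW(\widehat u)(x,\lambda)$. By Definition \ref{defn:wevelet} this is $\widehat u$ evaluated at the test function $y\mapsto\Psi(\lambda\cdot(x-y))$, hence equals $u$ evaluated at the Fourier transform of that function. The key elementary step is the explicit formula for the Fourier transform of the dilated, translated, reflected mother wavelet: writing $g_{x,\lambda}(y)=\Psi(\lambda(x-y))$, a direct computation gives $\widehat{g_{x,\lambda}}(\xi)=\lambda^{-n}e^{\mathrm{i}\,\langle x,\xi\rangle}\,\widehat\Psi(\xi/\lambda)$ (up to the normalization convention of the Fourier transform and using $\Psi(-\,\cdot)=\Psi$). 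Thus $\psi_{(x,\lambda)}(\xi):=\lambda^{-n}e^{\mathrm{i}\langle x,\xi\rangle}\widehat\Psi(\xi/\lambda)$ is the Schwartz function to feed into $u$.

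Next I would verify that the total family $((x,\lambda),\xi)\mapsto\psi_{(x,\lambda)}(\xi)$ is of $\cCexp$-class. The factor $\lambda^{-n}$ is subanalytic on $\RR^n\times\RR_{>0}$, and the oscillatory factor $e^{\mathrm{i}\langle x,\xi\rangle}=\exp(\mathrm{i}\sum x_j\xi_j)$ is of the form $\exp(\mathrm{i}f)$ with $f$ subanalytic, so it is of $\cCexp$-class by Definition \ref{def:Cexp}. The genuinely nontrivial factor is $\widehat\Psi(\xi/\lambda)$: here I need that $\widehat\Psi$, the Fourier transform of the fixed $\cCexp$-class, $C^\infty$, compactly supported mother wavelet, is itself of $\cCexp$-class as a function of $\xi$, after which precomposition with the subanalytic map $(\lambda,\xi)\mapsto\xi/\lambda$ keeps it of $\cCexp$-class by Lemma \ref{prop:compCexp}. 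The stability of the $\cCexp$-class under parametric Fourier transform is exactly the content guaranteed by \cite{CCMRS} (see Remark \ref{rem:Cexp}), so $\widehat\Psi\in\cCexp(\RR^n)$; being a product of $\cCexp$-class functions, the whole family $\psi_z$ is then of $\cCexp$-class. I would also note that each $\psi_z$ is Schwartz (as the Fourier transform of a compactly supported smooth function), so the hypotheses of the tempered part of Proposition \ref{prop:gen} are met.

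Finally I would apply Proposition \ref{prop:gen} to conclude that $z=(x,\lambda)\mapsto\langle u,\psi_z\rangle=\cW(\widehat u)(x,\lambda)$ is of $\cCexp$-class, and hence, by Definition \ref{def:distrCexp} together with the independence of the choice of mother wavelet (Corollary \ref{prop:ind}), that $\widehat u$ is a distribution of $\cCexp$-class; temperedness of $\widehat u$ is automatic since the Fourier transform preserves tempered distributions. The main obstacle I anticipate is twofold: first, bookkeeping the support condition in Definition \ref{defn:wevelet} (the wavelet transform is defined piecewise, being zero when the translated bump leaves $U$), although for $U=\RR^n$ this issue disappears, which is the relevant case here; and second, pinning down that $\widehat\Psi$ genuinely lies in $\cCexp(\RR^n)$ and that the reflection/dilation/translation bookkeeping in the Fourier formula is done with the correct constants, so that the resulting family is manifestly $\cCexp$-class rather than merely smooth. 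Once $\widehat\Psi\in\cCexp$ is secured, the rest is an assembly of the closure properties (products, subanalytic precomposition, and the parametric evaluation of Proposition \ref{prop:gen}).
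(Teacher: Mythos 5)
Your proposal is correct and follows essentially the same route as the paper: both use the adjunction $\langle \cF u,\varphi\rangle=\langle u,\cF\varphi\rangle$ to rewrite the wavelet transform of $\widehat u$ as $u$ evaluated on the Fourier transforms of the dilated wavelets, invoke the stability of the $\cCexp$-class under parametric Fourier transforms (Theorem \ref{thm:stab}), and conclude with the tempered part of Proposition \ref{prop:gen}. The only difference is cosmetic: you compute $\cF(y\mapsto\Psi(\lambda(x-y)))$ explicitly and check each factor, whereas the paper applies Theorem \ref{thm:stab} directly to the whole parametric family without factoring it.
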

\begin{proof}
This follows from Proposition \ref{prop:gen}.
Indeed, for any $\varphi \in \cS(\RR^n)$, we have by definition
	$$\la \cF u,\varphi \ra = \la u,\cF\varphi\ra.$$

Hence, we have the equality of functions
	$$(\lambda,x) \mapsto \la \cF u,y\mapsto \Psi(\lambda(x-y)) \ra = (\lambda,x)\mapsto \la u, \cF(y\mapsto \Psi(\lambda(x-y))) \ra.$$
	As $(\lambda,y,x)\mapsto \Psi(\lambda(x-y))$ is a $\cCexp$-class function, by Theorem \ref{thm:stab},
	$$(\lambda,x)\mapsto \cF(y\mapsto \Psi(\lambda(x-y)))$$
	is also a $\cCexp$-class function, and we conclude by the second part of Proposition \ref{prop:gen}.
\end{proof}

\begin{prop}\label{prop:L1}
If $f$ is a locally integrable $\cCexp$-class function on an open set $U\subset \RR^n$, then it gives a distribution of $\cCexp$-class on $U$.
\end{prop}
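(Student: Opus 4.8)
The plan is to compute the continuous wavelet transform $\cW(u_f)$ explicitly and recognise it as a parametric integral of a $\cCexp$-class function, so that the integration stability Theorem \ref{thm:stab} applies. Write $u_f$ for the distribution $\varphi\mapsto\int_U f(y)\varphi(y)\,\mathrm dy$ attached to $f$. For $(x,\lambda)\in\RR^n\times\RR_{>0}$ the support of the test function $y\mapsto\Psi(\lambda(x-y))$ equals $x-\tfrac1\lambda\Supp(\Psi)$. Since $\Supp(\Psi)$ and $U$ are subanalytic, the set
$$
S:=\{(x,\lambda)\in\RR^n\times\RR_{>0}\mid x-\tfrac1\lambda\Supp(\Psi)\subset U\}
$$
is subanalytic, and by Definition \ref{defn:wevelet} one has $\cW(u_f)=\mathbf 1_S\cdot I$, where $I(x,\lambda):=\int_U f(y)\,\Psi(\lambda(x-y))\,\mathrm dy$. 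Moreover, for $(x,\lambda)\in S$ the integrand is supported in $y$ on the compact subset $x-\tfrac1\lambda\Supp(\Psi)$ of $U$, on which $f$ is integrable; hence $I(x,\lambda)$ is finite on $S$.

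Next I would verify that the integrand is of $\cCexp$-class over the correct domain. Extending $f$ by zero to a function $\hat f$ on $\RR^n$ keeps it of $\cCexp$-class, as one checks on the generators of $\cCexp(U)$: the extensions by zero of subanalytic functions and of functions $\log g$ are again of $\cC$-class, that of $\exp(\mathrm if)$ is again of $\cCexp$-class, and for the functions $\gamma_{h,\ell}$ one uses that $\gamma_{\hat h,\ell}$ extends $\gamma_{h,\ell}$ by zero, where $\hat h$ is the extension by zero of $h$. Since $\Psi\in\cCexp(\RR^n)$ and $(x,\lambda,y)\mapsto\lambda(x-y)$ is subanalytic, Lemma \ref{prop:compCexp} gives that $(x,\lambda,y)\mapsto\Psi(\lambda(x-y))$ is of $\cCexp$-class, and therefore so is the product
$$
(x,\lambda,y)\longmapsto \hat f(y)\,\Psi(\lambda(x-y))
$$
on $(\RR^n\times\RR_{>0})\times\RR^n$.

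I would then apply Theorem \ref{thm:stab}, integrating out the $y$-variable over $\RR^n$ (noting $\int_{\RR^n}\hat f(y)\Psi(\lambda(x-y))\,\mathrm dy=I(x,\lambda)$ since $\hat f$ vanishes off $U$), to obtain a $\cCexp$-class function $g$ on $\RR^n\times\RR_{>0}$ that agrees with $I$ at every point where the integral is finite, in particular on all of $S$. Finally, since $\mathbf 1_S$ is subanalytic and hence of $\cCexp$-class, the product $\mathbf 1_S\cdot g$ is of $\cCexp$-class; it equals $\mathbf 1_S\cdot I=\cW(u_f)$, so by Definition \ref{def:distrCexp} the distribution $u_f$ lies in $\mathscr{D}'_{\cCexp}(U)$.

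The substantive analytic input—stability of the $\cCexp$-class under parametric integration—is entirely furnished by Theorem \ref{thm:stab}, so I expect the only genuine subtlety to be the bookkeeping around the domain of integration: namely, verifying that extension by zero preserves the $\cCexp$-class (so that Theorem \ref{thm:stab}, stated for integration over $\RR^m$, is applicable) and confirming that the integral is finite precisely on the subanalytic set $S$ where the wavelet transform is given by $I$ rather than by $0$.
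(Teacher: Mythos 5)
Your proof is correct and follows essentially the same route as the paper, whose proof of Proposition \ref{prop:L1} is just the one-line observation that the wavelet transform is a parametric integral of a $\cCexp$-class function, so Theorem \ref{thm:stab} applies. You have merely supplied the bookkeeping (extension of $f$ by zero, the subanalytic set $S$ where the support condition holds) that the paper leaves implicit.
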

\begin{proof}
This follows from Theorem \ref{thm:stab} and Definition \ref{def:Cexp}. 
\end{proof}


Next we study push-forward and pull-back along appropriate maps and derivation for $\cCexp$-class distributions.

\begin{prop}[Push-forward]\label{prop:push}
Push-foward of distributions along subanalytic maps preserves the $\cCexp$-class. In detail, let $u$ be a $\cCexp$-class distribution on an open set $U\subset \R^n$, and let $f:U\to V$ be a subanalytic function which is proper and analytic on (an open subanalytic neighborhood in $U$ of) the support of $u$, for some open set $V\subset \R^m$. Then $f_*(u)$ is a distribution of $\cCexp$-class on $V$.
\end{prop}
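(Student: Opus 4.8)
The plan is to verify the defining condition of Definition~\ref{def:distrCexp}, namely that the continuous wavelet transform $\cW(f_*u)$ on $\RR^m\times\RR_{>0}$ is of $\cCexp$-class. By the definition of push-forward, whenever the test function $\varphi_{x,\lambda}\colon y\mapsto \Psi(\lambda(x-y))$ is supported in $V$ one has
$$\cW(f_*u)(x,\lambda)=\langle f_*u,\varphi_{x,\lambda}\rangle=\langle u,\varphi_{x,\lambda}\circ f\rangle=\Big\langle u,\ y'\mapsto \Psi\big(\lambda(x-f(y'))\big)\Big\rangle,$$
while $\cW(f_*u)(x,\lambda)=0$ otherwise. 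The aim is to recognize the right-hand side as $\langle u,\phi_{(x,\lambda)}\rangle$ for a $\cCexp$-class family of test functions $\phi_{(x,\lambda)}$ and then to invoke Proposition~\ref{prop:gen}. Note that $V$ is subanalytic, since $f$ is subanalytic, and recall that $\Supp\Psi$ is subanalytic by Definition~\ref{notation:motherwavelet}.

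First I would reduce to the case where $f$ is proper and analytic on all of $U$. Let $W\subset U$ be the open subanalytic neighborhood of $\Supp(u)$ on which $f$ is proper and analytic. I claim $u|_W$ is again of $\cCexp$-class on $W$: indeed $\cW(u|_W)=\mathbf{1}_{S}\cdot \cW(u)$, where $S=\{(x,\lambda)\mid \Supp(\varphi_{x,\lambda})\subset W\}$ is subanalytic (as $W$ and $\Supp\Psi$ are subanalytic), so that $\cW(u|_W)$ is a product of a subanalytic characteristic function with a $\cCexp$-class function, hence of $\cCexp$-class. Since $f_*u=(f|_W)_*(u|_W)$ and $\Supp(u)\subset W$, I may replace $(U,u,f)$ by $(W,u|_W,f|_W)$ and thus assume from now on that $f$ is proper and analytic on all of $U$.

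With $f$ proper on all of $U$, the function $\phi_{(x,\lambda)}\colon y'\mapsto \Psi(\lambda(x-f(y')))$ has support contained in $f^{-1}(\Supp\varphi_{x,\lambda})$, which is compact; as $f$ is analytic and $\Psi$ is $C^\infty$, this shows $\phi_{(x,\lambda)}\in C_c^\infty(U)$. Moreover the total family
$$\RR^m\times\RR_{>0}\times U\to\CC:\ ((x,\lambda),y')\mapsto \Psi\big(\lambda(x-f(y'))\big)$$
is of $\cCexp$-class: the map $((x,\lambda),y')\mapsto \lambda(x-f(y'))$ is subanalytic, being built from $f$ and the field operations, so the claim follows from Lemma~\ref{prop:compCexp} applied to the $\cCexp$-class function $\Psi$. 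Proposition~\ref{prop:gen}, applied with $z=(x,\lambda)\in\RR^m\times\RR_{>0}$, then yields that $(x,\lambda)\mapsto \langle u,\phi_{(x,\lambda)}\rangle$ is of $\cCexp$-class.

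Finally I would assemble $\cW(f_*u)$. It equals $\mathbf{1}_{S'}\cdot\big((x,\lambda)\mapsto \langle u,\phi_{(x,\lambda)}\rangle\big)$, where $S'=\{(x,\lambda)\mid \Supp(\varphi_{x,\lambda})\subset V\}$ is subanalytic (using that $V$ and $\Supp\Psi$ are subanalytic). As a product of a subanalytic characteristic function with a $\cCexp$-class function, $\cW(f_*u)$ is of $\cCexp$-class, which is the desired conclusion. The main obstacle throughout is ensuring that the relevant test functions are genuinely compactly supported so that Proposition~\ref{prop:gen} applies; this is exactly what the properness hypothesis provides, once one has made the reduction to $f$ proper on the whole domain.
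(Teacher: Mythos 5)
Your proof is correct and follows essentially the same route as the paper's: both express $\cW(f_*u)(x,\lambda)$ as $\langle u,\ y'\mapsto \Psi(\lambda(x-f(y')))\rangle$, observe via Lemma \ref{prop:compCexp} that this family is of $\cCexp$-class, and conclude by Proposition \ref{prop:gen}. The extra details you supply --- the reduction to $f$ proper and analytic on all of $U$, the verification that the pulled-back test functions are compactly supported, and the subanalytic indicator functions handling the support conditions --- are points the paper's much terser proof leaves implicit.
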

\begin{proof}
Consider a proper smooth analytic subanalytic map $f:U \to V$ between two open sets $U \subset \RR^n$ and $V \subset \RR^m$. Let $u$ be a $\cCexp$-class distribution on $U$. The push-forward $f_*u$ is defined for any $\varphi \in C_{c}^{\infty}(V)$ by
	$$\la f_*u,\varphi \ra = \la u,\varphi \circ f \ra.$$
We have the equality
	$$(\lambda,x)\mapsto \la f_*u,y\mapsto \Psi(\lambda(x-y)) \ra = \la u,z \mapsto \Psi(\lambda(x-f(z)))\ra.$$
	As $(\lambda,z,x)\mapsto \Psi(\lambda(x-f(z)))$ is a $\cCexp$-function by Lemma \ref{prop:compCexp}, the proposition follows from Proposition \ref{prop:gen}.
\end{proof}


In the next two results, we use the notion of wave front set $\WF(u)$  of a distribution $u$ from \cite[Section 8]{Hormander83}.

\begin{prop}[Pull-back]\label{prop:pull}
	Pull-back of distributions preserves $\cCexp$-class, along subanalytic maps, as follows.  Let $X$ and $Y$ be two subanalytic open subsets of $\RR^m$ and $\RR^n$ respectively and let $f:X\to Y$ be an analytic subanalytic map. Denote the set of normals of $f$ by
	$$N_f = \{(y,\eta)\in Y \times \RR^n \mid \exists x\in \R^m, \:^{t}d f(x)\eta = 0 \mbox{ and } f(x)=y\}.$$
	For any distribution $u$ on $Y$ of $\cCexp$-class with wave front set $\WF(u)$ satisfying the condition $N_f \cap \WF(u)=\emptyset$,
	the pull-back $f^*(u)$ is well defined and of $\cCexp$-class.
\end{prop}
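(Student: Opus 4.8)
The well-definedness of $f^*(u)$ under the hypothesis $N_f\cap\WF(u)=\emptyset$ is exactly H\"ormander's pullback theorem \cite[Theorem 8.2.4]{Hormander83}, so the real content is the $\cCexp$-class assertion. The plan is to imitate the structure used for push-forwards (Proposition \ref{prop:push}): produce a $\cCexp$-class regularization $u_\varepsilon$ of $u$, pull it back by mere composition, show that the wavelet transform of $f^*(u_\varepsilon)$ is of $\cCexp$-class, and finally let $\varepsilon\to 0$, invoking Proposition \ref{prop:limits} so that the limit stays of $\cCexp$-class. The sole role of the wave front hypothesis is to guarantee that these regularized pullbacks genuinely converge to $f^*(u)$.

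Concretely, I would first set $u_\varepsilon(y):=\varepsilon^{-n}\cW(u)(y,1/\varepsilon)$, which is precisely the mollification $u*\Psi_\varepsilon$ with $\Psi_\varepsilon(\cdot)=\varepsilon^{-n}\Psi(\cdot/\varepsilon)$. Since $u$ is of $\cCexp$-class, $\cW(u)$ is of $\cCexp$-class, and as $\varepsilon\mapsto 1/\varepsilon$ is subanalytic on $\RR_{>0}$, Lemma \ref{prop:compCexp} shows that $(y,\varepsilon)\mapsto u_\varepsilon(y)$ is of $\cCexp$-class. Because $u_\varepsilon$ is a genuine smooth function, its pullback is the composition $f^*(u_\varepsilon)=u_\varepsilon\circ f$, so $(z,\varepsilon)\mapsto u_\varepsilon(f(z))$ is again of $\cCexp$-class by Lemma \ref{prop:compCexp}. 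On the subanalytic set of pairs $(x,\lambda)$ for which $y\mapsto\Psi(\lambda(x-y))$ is supported in $X$ (and by the value $0$ off this set), the wavelet transform is
$$
\cW(f^*(u_\varepsilon))(x,\lambda)=\int_{z} u_\varepsilon(f(z))\,\Psi(\lambda(x-z))\,dz,
$$
an integral of a product of $\cCexp$-class functions; by the integration theorem (Theorem \ref{thm:stab}) the map $(x,\lambda,\varepsilon)\mapsto\cW(f^*(u_\varepsilon))(x,\lambda)$ is of $\cCexp$-class.

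It remains to pass to the limit $\varepsilon\to 0$. For this I would fix a closed conic neighborhood $\Gamma$ of $\WF(u)$ with $\Gamma\cap N_f=\emptyset$ and use that the mollifications $u_\varepsilon$ converge to $u$ not merely in $\mathscr{D}'(Y)$ but in the stronger H\"ormander topology of $\mathscr{D}'_\Gamma(Y)$ (distributions with wave front set contained in $\Gamma$), along which the pullback $f^*$ is sequentially continuous \cite[Theorem 8.2.4]{Hormander83}. Consequently $f^*(u_\varepsilon)\to f^*(u)$ in $\mathscr{D}'(X)$; pairing against the fixed test function $y\mapsto\Psi(\lambda(x-y))$ gives $\cW(f^*(u_\varepsilon))(x,\lambda)\to\cW(f^*(u))(x,\lambda)$ for every $(x,\lambda)$. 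Applying Proposition \ref{prop:limits} in the scale variable $\mu=1/\varepsilon\to+\infty$ to the $\cCexp$-class family of the previous paragraph then shows that $\cW(f^*(u))$ is of $\cCexp$-class, that is, $f^*(u)$ is of $\cCexp$-class.

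The \emph{main obstacle} is exactly this last convergence in the wave front topology: one must verify that $\WF(u_\varepsilon)\subset\Gamma$ and that mollification converges in $\mathscr{D}'_\Gamma(Y)$ rather than only weakly, which is where the hypothesis $N_f\cap\WF(u)=\emptyset$ does its work. A secondary technical point, arising because $Y$ is merely open, is that $u*\Psi_\varepsilon$ is a priori only defined near compact subsets of $Y$; since the value $\cW(f^*(u))(x,\lambda)$ depends on $u$ only through its restriction to the compact set $f(\Supp(\Psi(\lambda(x-\cdot))))\subset Y$, I would localize $u$ by a $\cCexp$-class cutoff (Proposition \ref{prop:cutoffCexp}) to reduce to the compactly supported case, where $u_\varepsilon$ is globally a compactly supported $\cCexp$-class smooth function for all small $\varepsilon>0$.
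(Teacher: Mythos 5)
Your proposal is correct and follows essentially the same route as the paper: regularize $u$ by mollification (with a $\cCexp$-class cutoff to handle that $Y$ is only open), observe via Lemma \ref{prop:compCexp} and Theorem \ref{thm:stab} that the wavelet transforms of $f^*(u_\varepsilon)$ form a $\cCexp$-class family, use H\"ormander's Theorems 8.2.3--8.2.4 for convergence in $\mathscr{D}'_\Gamma$ and sequential continuity of $f^*$, and conclude with Proposition \ref{prop:limits}. The only cosmetic difference is that the paper builds the cutoffs into the regularization from the start via a subanalytic exhaustion $K_\varepsilon$ of $Y$, whereas you defer this to a final localization remark.
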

	\begin{rem}\label{rem:pull}
		If $u$ is a smooth function then $f^*(u)$ is defined as $u\circ f$.
		By continuity of $f^*$, we get that $f^*u$ is $u \circ f$ for a locally integrable function $u$ such that
		$N_f \cap \WF(u)=\emptyset$, see \cite[Theorem 8.2.3, Theorem 8.2.4]{Hormander83}.
	\end{rem}

\begin{proof}[Proof of Proposition \ref{prop:pull}] 
The pull-back $f^*(u)$ is well defined by application of \cite[Theorem 8.2.4]{Hormander83}.
	We consider the following exhaustion by subanalytic compact sets $(K_{\eps})_{\eps>0}$ of $Y$ with for any $\eps>0$
	$$K_{\eps} = \{ y \in \RR^n \mid ||y||\leq 1/\eps,\: d(y,\RR^n\smallsetminus  Y)\geq \eps\},$$
with $d$ the Euclidean distance and $||y||$ the Euclidean norm.
	We have $Y=\cup_{\eps>0} K_{\eps}$, for any $\eps'\geq \eps$, $K_{\eps'}\subset K_{\eps}$. Furthermore, for any compact $K$ of $Y$, there is $\eps(K)$ such that for each $\eps \leq \eps(K)$ one has  $K\subset K_\eps$. By Propostion \ref{prop:cutoffCexp}, for any $\eps>0$, we may consider a cut-off function $\chi_{\eps} \in C_{c}^{\infty}(Y)$ of $\cCexp$-class such that $\chi_{\eps}=1$ on a neighborhood of $K_\eps$.
	Furthermore, it follows 
from the constructions of the family $(K_{\eps})$ and the $\chi_{\eps}$ (as for Propostion \ref{prop:cutoffCexp}) that the function $\chi:(\eps,y)\mapsto \chi_{\eps}(y)$ is of $\cCexp$-class.
	
	With notations of the proof of Proposition \ref{prop:gen}, for any $\eps>0$, we consider
	$$u_\eps = (\chi_\eps u)*\Psi_{\eps} = x\mapsto \la u,y \mapsto \chi_\eps(y)\Psi_{\eps}(x-y)\ra.$$
	By Proposition \ref{prop:gen}, $u_\eps$ is a $\cCexp$-class function (this uses that $u$ is a $\cCexp$-class distribution).
	Furthermore, by Lemma \ref{prop:compCexp},  $u_\eps \circ f$ is also a $\cCexp$-class function.
	As in the proof for Proposition \ref{prop:gen} and by \cite[Theorem 8.2.3]{Hormander83} and using its notations, with $\Gamma=\WF(u)$, for $\eps$ small enough one has $u_\eps \in C_{c}^{\infty}(Y)$ and $u_\eps \underset{\eps \to 0} \to u$ in $\cD'_{\Gamma}(Y)$. By continuity of $f^{*}$ in \cite[Theorem 8.2.4]{Hormander83}, one has
	$$f^{*}(u_\eps)=u_\eps \circ f \underset{\eps \to 0}{\to} f^{*}(u).$$
	In particular, for any $(\lambda,x)$ one has
	$$\la f^{*}(u_\eps),t \mapsto \Psi(\lambda(x-t))\ra \underset{\eps \to 0}{\to} \la f^{*}(u),t \mapsto \Psi(\lambda(x-t))\ra.$$
	As $(\lambda,x,\eps)\mapsto \la f^{*}(u_\eps),t \mapsto \Psi(\lambda(x-t))\ra$ is a $\cCexp$-function, by Proposition \ref{prop:limits} and by taking the limit $\eps\to 0$, we get that $(\lambda,x)\mapsto \la f^{*}(u_\eps),t \mapsto \Psi(\lambda(x-t))\ra$ is a $\cCexp$-class function, meaning that $f^{*}(u)$ is a $\cCexp$-class distribution.
\end{proof}

We include the following result for independent interest.

\begin{prop}\label{lem:WF}
Let $u$ be a distribution on an open set $U$ in $\RR^n$.
If $u$ is holonomic, then $u$ is WF-holonomic in the sense of \cite{AizDr}.
\end{prop}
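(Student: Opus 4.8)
The plan is to bound $\WF(u)$ by the real points of the characteristic variety of the holonomic $\cD$-module attached to $u$, and then to observe that holonomicity forces this real locus to have dimension at most $n$, which is exactly the WF-holonomicity condition of \cite{AizDr} (the wave front set being contained in a conic Lagrangian, equivalently $\dim \WF(u)\le n$).

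First I would reduce to a local statement. Both holonomicity (Definition \ref{def:holonomic}) and WF-holonomicity are local conditions, and $\WF(u)$ is determined by the restrictions of $u$ to small neighborhoods. So I fix $x\in U$ and choose an open $\Omega\subset\CC^n$ as in Definition \ref{def:holonomic}, together with a coherent left ideal sheaf $\cI$ of $\cD_{\CC^n\mid\Omega}$ satisfying $\cI.u=0$ on $\Omega$ and with $\Char\!\left(\cD_{\CC^n\mid\Omega}/\cI\right)$ of complex dimension $n$.

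Next I would invoke microlocal elliptic regularity. For every differential operator $P\in\cI(\Omega)$ one has $P.u=0$, hence $\WF(P.u)=\emptyset$, so by the microlocal regularity theorem \cite[Theorem 8.3.1]{Hormander83} one gets $\WF(u)\subset\{(y,\xi): \sigma(P)(y,\xi)=0\}$, where $\sigma(P)$ denotes the principal symbol of $P$ (the polynomial $p_m$ of Remark \ref{rem:hol}). Intersecting over all $P\in\cI(\Omega)$ yields
\[
\WF(u)\subset\bigcap_{P\in\cI(\Omega)}\{(y,\xi)\in T^*U\setminus 0 : \sigma(P)(y,\xi)=0\}.
\]
The right-hand side is precisely the set of those points $(y,\xi)$ with $y\in\Omega\cap\RR^n$ and $\xi\in\RR^n\setminus 0$ lying on the complex characteristic variety $\Char\!\left(\cD_{\CC^n\mid\Omega}/\cI\right)$, i.e.\ its real points.

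Finally I would apply the dimension bound coming from holonomicity. Since $\Char\!\left(\cD_{\CC^n\mid\Omega}/\cI\right)$ has complex dimension $n$, its set of real points has real dimension at most $n$. Hence $\WF(u)$ is, near $x$, contained in a conic subanalytic set of dimension at most $n$, that is, in a Lagrangian; patching over a cover of $U$ by such neighborhoods $\Omega\cap\RR^n$ then gives that $u$ is WF-holonomic in the sense of \cite{AizDr}. The main obstacle I anticipate is one of conventions rather than of substance: one must verify that the $C^\infty$ wave front set of \cite[Section 8]{Hormander83} (and not only the analytic one) is controlled by the real characteristic set via \cite[Theorem 8.3.1]{Hormander83}, and that the resulting dimension-$\le n$ bound on $\WF(u)$ matches the precise definition of WF-holonomicity in \cite{AizDr}. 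The passage from the complex characteristic variety to its real points, using that the real locus of an $n$-dimensional complex variety has real dimension at most $n$, is the conceptual heart but is standard.
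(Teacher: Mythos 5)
Your first step coincides with the paper's: both arguments rest on H\"ormander's microlocal regularity theorem \cite[Theorem 8.3.1]{Hormander83}, which bounds $\WF(u)$ by the common real zero set of the principal symbols of the operators in $\cI(\Omega)$, i.e.\ by the real points of $\Char\!\left(\cD_{\CC^n\mid\Omega}/\cI\right)$. The gap is in your final step. From ``$\Char$ has complex dimension $n$'' you correctly deduce that its real locus has real dimension at most $n$, but you then assert that a conic set of dimension at most $n$ is (contained in) a Lagrangian, ``equivalently $\dim\WF(u)\le n$''. This is false: in $T^*\RR^2$ the set $\{(x_1,0;\xi_1,0):\xi_1\neq 0\}$ is closed, conic, subanalytic and of dimension $2=n$, yet it is not isotropic and is not contained in any finite union of conormal bundles of submanifolds of $\RR^2$. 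Since WF-holonomicity in the sense of \cite{AizDr} means precisely that $\WF(u)$ is contained in a finite union of conormal bundles of smooth submanifolds (not merely that it has dimension at most $n$), a dimension count alone does not close the argument.

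The missing ingredient is the involutivity of the characteristic variety (Sato--Kawai--Kashiwara/Gabber): $\Char\!\left(\cD_{\CC^n\mid\Omega}/\cI\right)$ is coisotropic, so when its dimension equals $n$ it is a conic complex Lagrangian subvariety, and such a variety is a finite union of closures of conormal bundles $T^*_{A_i}\Omega$ of analytic submanifolds $A_i\subset\Omega$; this is the ``standard result on D-modules'' that the paper quotes from \cite[Section IV.1]{Granger-Maisonobe}. The paper then passes to the real points of each conormal variety and concludes $\WF(u)\subset\bigcup_{i}\CN^{U}_{A_i\cap\RR^n}$, which is exactly the definition of WF-holonomicity. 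Your proof can be repaired by replacing the dimension argument with this Lagrangian decomposition, but as written it does not establish the statement.
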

\begin{proof}
		If $u$ is holonomic then by Definition \ref{def:holonomic}, for any $x \in U$, there is an open neighborhood $\Omega$ of $x$ in $\CC^n$ and a coherent left ideal sheaf $\cI$ of  $\cD_{\CC^n \mid \Omega}$, such that $\cI.u=0$ on $\Omega$ and the dimension of the characteristic variety $\Char\left(\cD_{\CC^n \mid \Omega}/\cI\right)$ is $n$. In particular by a standard result on D-modules (see \cite[Section IV.1]{Granger-Maisonobe}), there are finitely many analytic submanifolds $(A_i)_{i \in I}$ of $\Omega$, such that
		\[\Char\!\left(\cD_{\CC^n \mid \Omega}/\cI\right) \subset \bigcup_{i \in I} \CN_{A_i}^{\Omega}\]
where for any $i \in I$, $\CN_{A_i}^{\Omega}$ is the conormal bundle of $A_i$ in $\Omega$.
		By \cite[Theorem 8.3.1]{Hormander83}, we have
		\[\WF(u) \subset \bigcup_{i \in I} \CN_{A_i \cap \RR^n}^{U}\]
		which implies by definition that $u$ is WF-holonomic in the sense of \cite{AizDr}.
\end{proof}

\begin{prop}[Derivation]\label{prop:der}
	Any partial derivative of distributions preserves the $\cCexp$-class. 
\end{prop}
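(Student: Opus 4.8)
The plan is to transfer the derivative onto the mother wavelet by integration by parts and then invoke Proposition~\ref{prop:gen}. Fix $i$ and a $\cCexp$-class distribution $u$ on $U$; I must show that $\partial_i u \in \mathscr{D}'_{\cCexp}(U)$, i.e. that $\cW(\partial_i u)$ is of $\cCexp$-class. Whenever $(x,\lambda)$ is such that $y\mapsto\Psi(\lambda(x-y))$ is supported in $U$, the definition of the distributional derivative together with the chain rule give
\[
\cW(\partial_i u)(x,\lambda)=\la\partial_i u,\, y\mapsto\Psi(\lambda(x-y))\ra=-\la u,\, y\mapsto\partial_{y_i}\Psi(\lambda(x-y))\ra=\lambda\,\la u,\, y\mapsto(\partial_i\Psi)(\lambda(x-y))\ra,
\]
and $\cW(\partial_i u)(x,\lambda)=0$ otherwise. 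Thus, up to the subanalytic factor $\lambda$, the function $\cW(\partial_i u)$ is obtained by pairing $u$ against the family $\phi_{(x,\lambda)}\colon y\mapsto(\partial_i\Psi)(\lambda(x-y))$.

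To apply Proposition~\ref{prop:gen} with $z=(x,\lambda)$ I need the total family $((x,\lambda),y)\mapsto(\partial_i\Psi)(\lambda(x-y))$ to be of $\cCexp$-class. Since $(x,\lambda,y)\mapsto\lambda(x-y)$ is polynomial, hence subanalytic, Lemma~\ref{prop:compCexp} reduces this to showing that $\partial_i\Psi$ is of $\cCexp$-class, that is (as $\Psi(x)=\prod_k\psi(x_k)$) that $\psi'\in\cCexp(\RR)$. This is the heart of the matter, and it is not free: I cannot simply differentiate $\cW(u)$ itself, because I do not have at my disposal that the $\cCexp$-class is preserved under differentiation. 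The way around this is Corollary~\ref{prop:ind}: since membership in $\mathscr{D}'_{\cCexp}$ does not depend on the chosen mother wavelet, I may run the whole argument with a convenient $\psi$, namely the explicit bump from the proof of Proposition~\ref{prop:cutoffCexp}, $\psi(x)=c\,f(1-x^2)$ with $f(t)=e^{-1/t}$ for $t>0$ and $f(t)=0$ for $t\le 0$. For this choice $\psi'(x)=-2cx\,f'(1-x^2)$, and $f'(t)=t^{-2}e^{-1/t}$ on $t>0$ (zero elsewhere) is the product of the subanalytic function $t\mapsto t^{-2}$ (on $t>0$, extended by $0$) with the $\cCexp$-class function $f$; hence $f'$, and therefore $\psi'$ (again by Lemma~\ref{prop:compCexp}), is of $\cCexp$-class. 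The same computation shows that every derivative of this $\psi$ is of $\cCexp$-class, which is all that is needed in order to iterate the argument and cover an arbitrary $\partial^\alpha$.

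Granting $\partial_i\Psi\in\cCexp$, Proposition~\ref{prop:gen} applied to $u$ and the family $\phi_{(x,\lambda)}$ shows that $(x,\lambda)\mapsto\la u,\phi_{(x,\lambda)}\ra$ is of $\cCexp$-class on the subanalytic locus where $\phi_{(x,\lambda)}\in C_c^\infty(U)$, and multiplying by $\lambda$ preserves the $\cCexp$-class. It remains only to reconcile the cutoff: $\cW(\partial_i u)$ is declared to vanish exactly where $y\mapsto\Psi(\lambda(x-y))$ fails to be supported in $U$, whereas the support of $y\mapsto(\partial_i\Psi)(\lambda(x-y))$ lies in the possibly smaller set $x-\lambda^{-1}\supp\Psi$. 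Because $\supp\Psi$ is subanalytic, both loci are subanalytic, so $\cW(\partial_i u)$ equals the $\cCexp$-class function produced above multiplied by the characteristic function of a subanalytic set, which is again of $\cCexp$-class. This is the same bookkeeping already carried out in the proofs of Corollary~\ref{prop:ind} and Proposition~\ref{prop:push}. Hence $\cW(\partial_i u)$ is of $\cCexp$-class and $\partial_i u$ is a distribution of $\cCexp$-class.

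I expect the main obstacle to be precisely the step that the $\cCexp$-class need not be stable under differentiation, which blocks the naive argument of differentiating $\cW(u)$ and forces the derivative onto the fixed mother wavelet; this is what makes the combination of Corollary~\ref{prop:ind} with an explicit, derivative-friendly choice of $\psi$ essential.
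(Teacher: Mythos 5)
Your proof is correct, and its skeleton is the same as the paper's: move the derivative onto the wavelet via the definition of the distributional derivative, check that the resulting family of test functions is of $\cCexp$-class in all variables jointly, and conclude with Proposition~\ref{prop:gen}. The one step you handle differently is the verification that $\partial_i\Psi$ (equivalently $\psi'$) is of $\cCexp$-class. The paper does this for an \emph{arbitrary} admissible mother wavelet by writing $\partial_{y_i}\bigl(\Psi(\lambda(x-y))\bigr)$ as a pointwise limit of difference quotients, each of which is of $\cCexp$-class by Lemma~\ref{prop:compCexp}, and then invoking Proposition~\ref{prop:limits}; you instead invoke Corollary~\ref{prop:ind} to reduce to the explicit bump $\psi(x)=c\,f(1-x^2)$ and differentiate it by hand, checking that $f'(t)=t^{-2}e^{-1/t}$ is a subanalytic multiple of the $\cCexp$-class function $f$. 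Both routes are valid; yours is more concrete but leans on the wavelet-independence result, whereas the paper's limit argument is wavelet-agnostic and, as a byproduct, shows that differentiation of a $C^1$ function of $\cCexp$-class stays within the class. For that reason your closing remark overstates the obstruction: Proposition~\ref{prop:limits} does furnish exactly the stability under differentiation-as-a-limit that you say is unavailable, so the explicit choice of $\psi$ is a convenience rather than a necessity. Your extra care with the cutoff locus (the subanalytic set of $(x,\lambda)$ where the translated wavelet is supported in $U$) is sound and is the same bookkeeping the paper performs elsewhere.
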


\begin{proof}
Let $u$ be a $\cCexp$-class distribution on an open set $U$ of $\RR^n$. By definition, for any $\varphi \in C_{c}^{\infty}(U)$,
	we have
	$$\la \partial_{y_i} u,\varphi \ra =- \la u,\partial_{y_i}\varphi\ra.$$
Thus, one has
$$(\lambda,x)\mapsto \la \partial_{y_i}u,y\mapsto \Psi(\lambda(x-y)) \ra = - \la u,\partial_{y_i}(y \mapsto \Psi(\lambda(x-y)))\ra.$$
	Applying the definition of a partial derivative (as a limit) and Proposition \ref{prop:limits} on limits of $\cCexp$-class functions, we get that $(\lambda,y,x)\mapsto \partial_{y_i}(y \mapsto \Psi(\lambda(x-y)))$ is a $\cCexp$-class function which is furthermore smooth with compact support, and hence, the proposition follows from Proposition \ref{prop:gen}.
\end{proof}

Proposition \ref{prop:anti-Cexp} will give that also anti-derivatives of $\cCexp$-distributions can be taken inside the $\cCexp$-class.
We end this section with a further link between $\cCexp$-class functions and $\cCexp$-class distributions.



%


\begin{proposition}\label{prop:Cexp.and.cont.is.CexpFun}
If a distribution $u$ on an open set $U\subset \RR^n$ is of $\cCexp$-class and is given by a continuous function $f$, then $f$ is of $\cCexp$-class. In symbols:
$$
\mathscr{D}'_{\cCexp}(U)\cap C^0(U) \subset \Cexp(U).
$$
More generally, if a distribution $u$ on an open set $U\subset \RR^n$ is of $\cCexp$-class and given by a locally integrable function, then there exists a locally integrable function $f$ in $\cCexp(U)$ such that $\la u,\phi \ra = \int_{x\in \R^n}f(x)\phi(x)dx$ for all functions $\phi$ in $C_{c}^{\infty}(U)$.
\end{proposition}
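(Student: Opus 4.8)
The plan is to realize the representing function as a pointwise limit of an explicit $\cCexp$-class function of the variables $(x,\lambda)$, and then to invoke the stability of the $\cCexp$-class under limits, Proposition \ref{prop:limits}. The relevant function is $\lambda^n$ times the continuous wavelet transform $\cW(u)$, the point being that as the scale $\lambda\to+\infty$ the rescaled mother wavelet $\lambda^n\Psi(\lambda\,\cdot\,)$ becomes an approximate identity, so that this function converges pointwise to the function representing $u$.

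First I would record a mollification identity. Fix $x\in U$. Since $U$ is open and the support of $y\mapsto\Psi(\lambda(x-y))$ shrinks to $\{x\}$ as $\lambda\to+\infty$, this support lies in $U$ for all large $\lambda$, so by hypothesis $\cW(u)(x,\lambda)=\int_{\RR^n}f(y)\Psi(\lambda(x-y))\,\mathrm{d}y$. Setting $\phi_\lambda(z):=\lambda^n\Psi(\lambda z)$, this reads $\lambda^n\cW(u)(x,\lambda)=(f*\phi_\lambda)(x)$, and the substitution $z=\lambda(x-y)$ together with $\int_{\RR^n}\Psi=1$ shows that $\phi_\lambda$ is nonnegative, compactly supported, and of total mass $1$, so that $(\phi_\lambda)_\lambda$ is an approximate identity as $\lambda\to+\infty$.

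Put $F(x,\lambda):=\lambda^n\cW(u)(x,\lambda)$. Since $u$ is of $\cCexp$-class, $\cW(u)$ is $\cCexp$-class, and as $\lambda^n$ is subanalytic the product $F$ is $\cCexp$-class on $U\times\RR_{>0}$ (recall that $U$ is subanalytic). If $f$ is continuous, then for every $x\in U$ dominated convergence gives $F(x,\lambda)=\int_{\RR^n}f(x-z/\lambda)\Psi(z)\,\mathrm{d}z\to f(x)$ as $\lambda\to+\infty$, the integrand being bounded by $M\Psi(z)$ with $M$ a bound for $|f|$ on a fixed compact neighbourhood of $x$. Proposition \ref{prop:limits}, applied to $F$ with $x$ as parameter and $\lambda\to+\infty$ as the limit variable, then produces a $\cCexp$-class function on $U$ agreeing with this limit wherever it exists; here the limit exists at every $x$ and equals $f(x)$, whence $f\in\Cexp(U)$.

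For the general statement I would keep the same $F$ and replace the elementary convergence by the Lebesgue differentiation theorem: for the nonnegative, compactly supported approximate identity $(\phi_\lambda)$ one has $(f*\phi_\lambda)(x)\to f(x)$ at every Lebesgue point of the locally integrable $f$, hence for almost every $x\in U$. Proposition \ref{prop:limits} again yields a $\cCexp$-class function $g$ on $U$ equal to $\lim_{\lambda\to+\infty}F(x,\lambda)$ wherever the latter exists, so $g=f$ almost everywhere; thus $g$ is locally integrable, represents the same distribution $u$, and lies in $\Cexp(U)$, as required. The main obstacle is precisely this convergence analysis of $f*\phi_\lambda$: it is trivial for continuous $f$, but in the locally integrable case it rests on the Lebesgue differentiation theorem to secure a.e.\ convergence, and on the bookkeeping that the $\cCexp$-class limit furnished by Proposition \ref{prop:limits}, being a.e.\ equal to $f$, genuinely represents $u$.
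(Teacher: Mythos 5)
Your proposal is correct and follows essentially the same route as the paper: the function $(x,\lambda)\mapsto\lambda^n\cW(u)(x,\lambda)$ is exactly the mollification $u*\Psi_{\eps}$ (with $\eps=1/\lambda$) used in the paper's proof, and both arguments conclude by combining the approximate-identity convergence (everywhere for continuous $f$, at Lebesgue points for locally integrable $f$) with the limit-stability result of Proposition \ref{prop:limits}. The only cosmetic difference is that the paper cites H\"ormander's Theorem 1.3.2 for the convergence where you invoke dominated convergence and the Lebesgue differentiation theorem directly.
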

\begin{proof}
Using the notation from the proof of Proposition \ref{prop:gen}, the function which sends $(\eps,x)$ in $\R_{>0}\times U$ to
$$\la u,y\mapsto \Psi_{\eps}(x-y) \ra =(u*\Psi_{\eps})(x)$$
whenever $y\mapsto \Psi_{\eps}(x-y)$ has support contained in $U$ and to zero otherwise
is of $\cCexp$-class on $\RR_{>0}\times U$ (this uses that $\Supp(\Psi)$ and $U$ are subanalytic), and converges to a function $f_0$ on $U$  when $\eps\to 0$, by \cite[Theorem 1.3.2]{Hormander83}. In the first case, one has $f=f_0$ and $f$ is of $\cCexp$-class by Proposition \ref{prop:limits}. In the second case, there is a function $f$ of $\cCexp$-class such that $f=f_0$ holds almost everywhere, by construction and by Proposition \ref{prop:limits}.
\end{proof}

%







\section{Anti-derivatives}\label{sec:antider}

We will use a construction of anti-derivatives of distributions to reduce to locally integrable distributions.

Throughout this section we fix a test function $\rho \in C_c^{\infty}\cap \cCexp(\R)$ on $\R$ with integral $1$ as in Proposition \ref{prop:cutoffCexp}, with the closed unit ball around $0$ for $K$.


\subsection{Construction of the anti-derivative}
\begin{notation} \label{not:ad}
	For any $i \in \{1,\dots, n\}$,
\begin{itemize}
\item define $C_{i}:C_c^{\infty}(\R^n)\to C_c^{\infty}(\R^n)$  by
	$$C_i(g)(x_1,\dots,x_n)=g(x_1,\dots,x_n) -\rho(x_i) \int_{-\infty}^{\blue{+\infty}} g(x_1,\dots,x_{i-1},y,x_{i+1}\dots,x_n)\mathrm{d}y.$$
\item define $B_{i}:C_c^{\infty}(\R^n)\to C^{\infty}(\R^n)$  by
	$$B_i(g)(x_1,\dots,x_n)=-\int_{-\infty}^{x_i} g(x_1,\dots,x_{i-1},y,x_{i+1}\dots,x_n)\mathrm{d}y.$$
\item define $A_{i}'=B_i\circ C_i$. Note  that $\mathrm{Im} A_i' \subset  C_c^{\infty}(\R^n)$. So, we consider $A_i'$ as a map from $C_c^{\infty}(\R^n)$ to itself.
      \item 
      define  $A_{i}:\scD'(\R^n)\to \scD'(\R^n)$  by
	      $$\left \langle A_{i}(u),f  \right  \rangle=\left  \langle u,B_{i} (C_{i}(f)) \right \rangle = \langle u,A_{i}'(f) \rangle$$
	      for any $f \in C_c^{\infty}(\R^n)$.
      \item define $A:=A_1\circ\cdots\circ A_n$ and similarly  $A':=A_1'\circ\cdots\circ A_n'$.
\end{itemize}
\end{notation}
\begin{lemma}\label{lem:anty.char}
Consider a distribution $u$ on $\R^n$. The following hold.
\begin{enumerate}
\item\label{item:1:anty} We have  $\frac{\partial}{\partial x_i} A_{i}(u)= u$.
\item For any $f \in C_c^{\infty}(\R^{n-1})$ we have $\la A_{i}(u),\rho_{i,f}\ra=0$ where
	$$ \rho_{i,f}(x_1,\dots,x_n)=\rho(x_i)f(x_1,\dots x_{i-1},x_{i+1},\ldots,x_n).$$
\end{enumerate}
\end{lemma}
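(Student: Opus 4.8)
The plan is to reduce both items to elementary pointwise identities for the operators $C_i$ and $B_i$ acting on test functions, after which each distributional statement follows by pairing against $u$. Throughout I use that $\mathrm{Im}(A_i')\subset C_c^{\infty}(\R^n)$, as recorded in Notation \ref{not:ad}, so that all pairings below are well defined.

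For item (1), I would begin from the definition of the distributional derivative: for any $f\in C_c^{\infty}(\R^n)$,
\[
\left\langle \frac{\partial}{\partial x_i} A_i(u),f\right\rangle
= -\left\langle A_i(u),\frac{\partial}{\partial x_i}f\right\rangle
= -\left\langle u, B_i\bigl(C_i(\tfrac{\partial}{\partial x_i}f)\bigr)\right\rangle.
\]
The key observation is that $C_i$ fixes every function of the form $\frac{\partial}{\partial x_i}f$: the correction term in $C_i$ contains the factor $\int_{-\infty}^{+\infty}\frac{\partial}{\partial x_i}f(x_1,\dots,y,\dots,x_n)\,\mathrm{d}y$, which vanishes by the fundamental theorem of calculus since $f$ has compact support. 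Hence $C_i(\frac{\partial}{\partial x_i}f)=\frac{\partial}{\partial x_i}f$, and then $B_i(\frac{\partial}{\partial x_i}f)(x)=-\int_{-\infty}^{x_i}\frac{\partial}{\partial x_i}f(x_1,\dots,y,\dots,x_n)\,\mathrm{d}y=-f(x)$, again using that $f$ vanishes at $-\infty$. Substituting yields $\langle \frac{\partial}{\partial x_i}A_i(u),f\rangle=\langle u,f\rangle$, which is the claim.

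For item (2), I would likewise unwind $\langle A_i(u),\rho_{i,f}\rangle=\langle u,B_i(C_i(\rho_{i,f}))\rangle$ and prove the stronger fact that $C_i(\rho_{i,f})$ is identically zero as a function; then $B_i(C_i(\rho_{i,f}))=0$ and the pairing vanishes for every $u$. This is immediate from the product structure $\rho_{i,f}(x)=\rho(x_i)\,f(x_1,\dots,x_{i-1},x_{i+1},\dots,x_n)$: the correction term of $C_i$ produces $\rho(x_i)\,f(x_1,\dots,x_{i-1},x_{i+1},\dots,x_n)\int_{-\infty}^{+\infty}\rho(y)\,\mathrm{d}y$, and since $\int_{\R}\rho=1$ this cancels the leading term exactly.

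I do not expect a genuine obstacle: both parts are formal consequences of the two defining properties of $\rho$, namely unit integral and compact support, together with the fundamental theorem of calculus. The only points needing minor care are the bookkeeping of the index $i$ (the operators act only in the $i$-th variable) and checking that the relevant images remain in $C_c^{\infty}(\R^n)$, so that all distributional pairings are legitimate and the distributional derivative is well defined.
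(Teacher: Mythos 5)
Your proof is correct and follows essentially the same route as the paper's (much terser) argument: both reduce item (1) to the identities $C_i(\partial_{x_i}f)=\partial_{x_i}f$ and $B_i(\partial_{x_i}f)=-f$ for compactly supported $f$, and both reduce item (2) to the observation that $C_i(\rho_{i,f})=0$ because $\int_\R\rho=1$. The sign bookkeeping checks out.
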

\begin{proof}For any $f \in C_c^{\infty}(\R^n)$ we have
$$ \la \frac{\partial}{\partial x_i} A_{i}u,f\ra= \la A_{i}u,-\frac{\partial}{\partial x_i} f\ra=\la u,B_{i} \circ C_{i}\lp -\frac{\partial}{\partial x_i} f\rp \ra=\la u,B_{i} \lp-\frac{\partial}{\partial x_i} f\rp\ra$$
which equals $\la u,f\ra$ as desired. The second item is clear from the observation that $C_i(\rho_{f,i})=0$.
\end{proof}

\begin{proposition}\label{prop:anti}
	Let $u$ be a distribution on  $\R^n$. If $A_i u$ is holonomic then $u$ is holonomic.
\end{proposition}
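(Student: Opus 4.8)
The plan is to read off the statement directly from the defining property of the anti-derivative operator $A_i$ together with the fact that differential operators preserve holonomicity. The crucial identity is Lemma \ref{lem:anty.char}\eqref{item:1:anty}, which asserts that
$$
\frac{\partial}{\partial x_i} A_i(u) = u .
$$
In other words, $u$ is obtained from $A_i(u)$ by applying the single first-order differential operator $\partial/\partial x_i$. So the whole argument reduces to expressing $u$ as $P.(A_i u)$ for a suitable $P \in \cD_{\CC^n}(V)$ and invoking the stability of holonomicity under such operators.

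Concretely, I would argue as follows. Assume $A_i(u)$ is holonomic. The operator $\partial/\partial x_i$ is a differential operator with analytic (in fact constant) coefficients, hence it defines a global section of $\cD_{\CC^n}$ on any open $V \subset \CC^n$ with $V \cap \RR^n = U$; in particular $\partial/\partial x_i \in \cD_{\CC^n}(V)$. By the second bullet of Proposition \ref{prop:sum:holon}, applying a differential operator in $\cD_{\CC^n}(V)$ to a holonomic distribution yields a holonomic distribution. Applying this to the holonomic distribution $A_i(u)$ gives that $\frac{\partial}{\partial x_i} A_i(u)$ is holonomic. By Lemma \ref{lem:anty.char}\eqref{item:1:anty} this distribution equals $u$, so $u$ is holonomic, as claimed.

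There is essentially no serious obstacle here: the content of the proposition has already been packaged into the two inputs, namely the local identity of Lemma \ref{lem:anty.char} and the module-theoretic closure property of Proposition \ref{prop:sum:holon}. The only point meriting a moment's care is purely formal, namely verifying that $\partial/\partial x_i$ legitimately falls within the class of operators covered by Proposition \ref{prop:sum:holon}; since that proposition is stated for arbitrary $P \in \cD_{\CC^n}(V)$ and $\partial/\partial x_i$ is a constant-coefficient (hence analytic-coefficient) operator, this is immediate and requires no extra work.

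\begin{proof}
	By Lemma \ref{lem:anty.char}\eqref{item:1:anty} we have $\frac{\partial}{\partial x_i} A_i(u) = u$. Since $\frac{\partial}{\partial x_i}$ lies in $\cD_{\CC^n}(V)$ and $A_i(u)$ is holonomic by hypothesis, Proposition \ref{prop:sum:holon} shows that $\frac{\partial}{\partial x_i} A_i(u)$ is holonomic. Hence $u$ is holonomic.
\end{proof}
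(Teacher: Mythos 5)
Your proof is correct and is essentially identical to the paper's own argument, which likewise writes $u = \partial.(A_i u)$ via Lemma \ref{lem:anty.char}\eqref{item:1:anty} and then invokes Proposition \ref{prop:sum:holon}. Nothing further is needed.
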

\begin{proof}
By construction, we have $u=\partial. A_i u$, so by Proposition \ref{prop:sum:holon} if $A_i u$ is holonomic, then $u$ is holonomic.
\end{proof}

\begin{proposition}\label{prop:anti-Cexp}
Let $u$ be a distribution on  $\R^n$. Suppose that $u$ is of $\cCexp$-class. Then also $A_i u$ is of $\cCexp$-class.
\end{proposition}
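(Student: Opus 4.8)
The plan is to verify the defining condition of Definition \ref{def:distrCexp} directly, by showing that the continuous wavelet transform $\cW(A_i u)$ is a $\cCexp$-class function. Since here the open set is all of $\R^n$, every test function $y\mapsto\Psi(\lambda(x-y))$ has support inside $\R^n$, so by the definition of $A_i$ in Notation \ref{not:ad},
$$
\cW(A_i u)(x,\lambda)=\la A_i u,\ y\mapsto\Psi(\lambda(x-y))\ra=\la u,\ A_i'(g_{x,\lambda})\ra,\qquad g_{x,\lambda}(y):=\Psi(\lambda(x-y)).
$$
The idea is then to invoke Proposition \ref{prop:gen} with parameter $z=(x,\lambda)\in\R^n\times\R_{>0}$ and test functions $\phi_{(x,\lambda)}=A_i'(g_{x,\lambda})$; these lie in $C_c^\infty(\R^n)$ because $\mathrm{Im}\,A_i'\subset C_c^\infty(\R^n)$ by Notation \ref{not:ad}. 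Thus everything reduces to showing that the total family $((x,\lambda),y)\mapsto A_i'(g_{x,\lambda})(y)$ is of $\cCexp$-class.

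To this end I would first compute $C_i(g_{x,\lambda})$. Writing $\Psi(\lambda(x-y))=\prod_k\psi(\lambda(x_k-y_k))$ and using $\int_\R\psi=1$ via the substitution $t=\lambda(x_i-s)$, the inner integral in the definition of $C_i$ collapses to $\frac1\lambda\prod_{k\neq i}\psi(\lambda(x_k-y_k))$, so that
$$
C_i(g_{x,\lambda})(y)=\prod_k\psi(\lambda(x_k-y_k))-\frac{\rho(y_i)}{\lambda}\prod_{k\neq i}\psi(\lambda(x_k-y_k)).
$$
This is visibly of $\cCexp$-class in $((x,\lambda),y)$: it is assembled from the $\cCexp$-class functions $\psi$ and $\rho$ precomposed with the subanalytic maps $(x,\lambda,y)\mapsto\lambda(x_k-y_k)$ (Lemma \ref{prop:compCexp}) and multiplied by the subanalytic function $1/\lambda$ on $\R_{>0}$.

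Next I apply $B_i$, rewriting the truncated integral over $(-\infty,y_i]$ as an integral over all of $\R$ against the subanalytic indicator of $\{s\le y_i\}$:
$$
A_i'(g_{x,\lambda})(y)=-\int_\R \mathbf{1}_{\{s\le y_i\}}\,C_i(g_{x,\lambda})(y_1,\dots,s,\dots,y_n)\,ds.
$$
The integrand is of $\cCexp$-class in the joint variables $((x,\lambda),y,s)$ and, for each fixed $((x,\lambda),y)$, is compactly supported hence integrable in $s$; stability of the $\cCexp$-class under parametric integration (Theorem \ref{thm:stab}) then gives that the total family is of $\cCexp$-class. Feeding this into Proposition \ref{prop:gen} shows that $\cW(A_i u)$ is of $\cCexp$-class, which is the claim. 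The only point requiring care is the bookkeeping in the first step — verifying that the normalization $\int\psi=1$ produces the clean cancellation above and that the factor $1/\lambda$ is harmless (it is, since $\lambda$ stays in $\R_{>0}$) — after which the two integrations are routine instances of Theorem \ref{thm:stab}.
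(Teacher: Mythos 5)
Your proof is correct and follows essentially the same route as the paper: both arguments reduce to Proposition \ref{prop:gen} applied to the family of test functions obtained by letting $A_i'=B_i\circ C_i$ act on the wavelet family, with Theorem \ref{thm:stab} and Lemma \ref{prop:compCexp} guaranteeing that this family is of $\cCexp$-class. The only cosmetic difference is that the paper packages the computation via an explicit anti-derivative $\widetilde{\Psi}_i=A_i'(\Psi)$ of the mother wavelet and the identity $\partial_{y_i}A_i u=u$, whereas you compute $A_i'(g_{x,\lambda})$ directly from the definitions; your bookkeeping (the $\frac{1}{\lambda}$ factor from $\int\psi=1$, the subanalytic indicator $\mathbf{1}_{\{s\le y_i\}}$, and integrability in $s$) is all accurate.
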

\begin{proof}
The proposition follows using an anti-derivative $\widetilde{\Psi_i}$ of $\Psi$ using $B_i\circ C_i$ from Notation \ref{not:ad} and by Proposition \ref{prop:gen}. In detail, note that $\widetilde{\Psi_i}:=B_i\circ C_i(\Psi)$ with $\Psi$ the mother wavelet is of $\cCexp$-class by Theorem \ref{thm:stab}. We reason on the continuous wavelet transform of $A_iu$ at $(\lambda,x)\in\R_{>0}\times \R^{n}$, using the partial derivation $\partial_{y_i}=\partial/\partial y_i$, and write
	$$\la A_i u, y\mapsto \Psi(\lambda(x-y)) \ra  = \la A_i u, y \mapsto \partial_{y_i}\left( -\frac{1}{\lambda}\widetilde{\Psi}_i(\lambda(x-y))\right)\ra,$$
which holds by a similar calculation as for the proof of Lemma \ref{lem:anty.char} and the chain rule.
	We thus have by (1) of Lemma \ref{lem:anty.char}
	$$\la A_i u, y\mapsto \Psi(\lambda(x-y))\ra = \la u,y \mapsto \frac{1}{\lambda}\widetilde{\Psi}_i(\lambda(y-x))\ra$$
	which is of $\cCexp$-class when seen as a function in $(\lambda,x)$ by Proposition \ref{prop:gen}. By Definition \ref{def:distrCexp}, the above means precisely that $A_i u$ is of $\cCexp$-class.
\end{proof}

\subsection{Anti-derivatives can make distributions into continuous functions}
In this subsection we will prove the following result.
\begin{proposition}\label{prop:anty.make.cont}
	Let $u$ be a distribution on  $\R^n$ with compact support. Then there exists $k$ in $\NN$ s.t. $A^k u$ belongs to $C^0(\R^n)$.
\end{proposition}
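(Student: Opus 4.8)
The plan is to prove that finitely many iterations of the anti-derivative operator $A = A_1 \circ \cdots \circ A_n$ turn a compactly supported distribution into a continuous function, exploiting the standard fact that every compactly supported distribution has finite order. First I would recall that since $u$ has compact support, by the structure theorem for distributions (see \cite[Theorem 2.3.10]{Hormander83}), there exists an integer $m$ and a finite collection of continuous functions $f_\alpha$, with $|\alpha| \le m$, such that $u = \sum_{|\alpha| \le m} \partial^\alpha f_\alpha$, where the $f_\alpha$ may be taken with support in a fixed compact neighborhood of $\Supp(u)$. Equivalently, $u$ is a finite sum of iterated partial derivatives (in the distributional sense) of continuous functions.

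The key mechanism is that $A_i$ is, up to the correction term built into $C_i$, a genuine anti-derivative in the $x_i$-direction: by item (1) of Lemma \ref{lem:anty.char} one has $\frac{\partial}{\partial x_i} A_i(u) = u$, so applying $A_i$ to a distribution of the form $\partial^\alpha (\text{continuous})$ lowers the total order of differentiation. The plan is to show that if we apply $A = A_1 \circ \cdots \circ A_n$ enough times — concretely, $k = m+1$ times — then every term $\partial^\alpha f_\alpha$ with $|\alpha| \le m$ gets integrated down to a continuous (in fact $C^0$, and even more regular) function. Each application of $A$ raises regularity by roughly one derivative in each coordinate direction, and since each multi-index $\alpha$ has $\alpha_i \le m$, after $m+1$ iterations every distributional derivative $\partial^\alpha$ is cancelled by a matching integration, leaving an absolutely continuous (hence $C^0$) function. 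Because $A^k$ is linear and the sum is finite, it suffices to handle each summand separately.

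The main subtlety — and the step I expect to be the real obstacle — is controlling the correction terms produced by $C_i$ and verifying that the resulting function is genuinely continuous (not merely locally $L^1$) and, importantly, still compactly supported or at least that the iteration is well-defined on $\mathscr{D}'(\R^n)$ at each stage. The operator $A_i$ does not literally commute with $\partial_j$ for $j \ne i$ in a naive way, so I would track carefully how $A_i$ acts on a pure derivative $\partial^\alpha f$: using Lemma \ref{lem:anty.char}(1) repeatedly, $A_i$ effectively integrates once in the $x_i$ variable while introducing a smooth correction supported near the $\rho$-profile, which only improves regularity. The cleanest route is to reduce to the one-variable building blocks: show that $B_i \circ C_i$ sends a continuous function to a $C^1$ function in the $x_i$-direction (the indefinite integral of a continuous function is $C^1$), and that the correction term $\rho(x_i)\int g\,dy$ is smooth in $x_i$. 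Thus each $A_i$ increases $x_i$-regularity by one order, and since differentiation and this integration are inverse to each other on the level of orders, a total of $m+1$ rounds of $A = A_1 \circ \cdots \circ A_n$ suffices.

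I would finish by an explicit bookkeeping argument: writing $u = \sum_{|\alpha|\le m}\partial^\alpha f_\alpha$ with $f_\alpha \in C^0_c$, one checks that $A^{m+1}(\partial^\alpha f_\alpha)$ is continuous for each $\alpha$, because applying $A_i$ a number of times exceeding $\alpha_i$ converts the $i$-th derivative into net integration, yielding a function that is continuous in each variable and, by standard arguments on iterated integrals of continuous functions with compact support, jointly continuous on $\R^n$. Summing over the finitely many $\alpha$ and using linearity of $A$ then gives $A^{m+1}u \in C^0(\R^n)$, so we may take $k = m+1$.
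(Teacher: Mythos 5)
Your strategy --- decompose $u$ via the structure theorem as $\sum_{|\alpha|\le m}\partial^\alpha f_\alpha$ with $f_\alpha$ continuous and compactly supported, then undo the derivatives by iterating $A$ --- is genuinely different from the paper's argument, and it could probably be completed, but as written it breaks down at exactly the point you yourself flag as "the real obstacle". The correction term produced by $C_i$ is \emph{not} "supported near the $\rho$-profile". Unwinding the definitions (using $\partial_i\circ B_i=-\mathrm{Id}$) gives, for a continuous $v$,
\[
\la A_i(\partial_i v),f\ra=\la v, C_i f\ra=\la v,f\ra-\la \widetilde w_i,f\ra,
\qquad
\widetilde w_i(x)=\int_{\R} v(x_1,\ldots,x_{i-1},t,x_{i+1},\ldots,x_n)\,\rho(t)\,\mathrm{d}t,
\]
so $A_i(\partial_i v)=v-\widetilde w_i$ where $\widetilde w_i$ is \emph{constant in the $x_i$-direction} and hence has unbounded support even when $v\in C^0_c$. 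This matters because your continuity argument for the later rounds rests on the explicit integral representation of $A$ on integrable functions (as in Lemma \ref{cor:anty.L1}, $A_j|_{L^1_K}=(\mathrm{Id}-E_j)\circ D_j$): the operator $D_j$ applied to a function constant in $x_j$ diverges, so that formula simply does not apply to the correction terms. One must go back to the duality definition and compute $A_j$ of such functions separately (one finds, e.g., that $A_j$ of a function constant in $x_j$ is $x_j$ times that function, up to further constant-in-$x_j$ corrections, so continuity does in fact survive); since every one of your $m+1$ rounds generates new corrections of this type, the key inductive claim "$A^{m+1}(\partial^\alpha f_\alpha)\in C^0$" requires a bookkeeping argument that the proposal announces but never carries out. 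A second, smaller, inaccuracy is the claim that $A_i$ and $\partial^\alpha$ are inverse "on the level of orders": as the computation above shows, $A_i(\partial_i v)\neq v$, and the discrepancy is precisely the non-compactly-supported term you need to control.

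For comparison, the paper sidesteps all of this by working on the dual side and never decomposing $u$: compact support gives finite order $k$, i.e., a continuous extension of $u$ to $C^k_K(\R^n)$; Lemma \ref{lem:anty.dual.make.L1.sm} shows that $A'$ maps $C^m_K$ into $C^{m+1}_K$, $L^\infty_K$ into $C^0_K$ and $L^1_K$ into $L^\infty_K$, whence $A^k u$ extends to $C^0_c$ (so is a measure), $A^{k+2}u$ extends to $L^1_c$ (so is absolutely continuous and, by Radon--Nikodym, an $L^1$ function of compact support), and finally Lemma \ref{cor:anty.make.L1.cont} upgrades $A^{k+4}u$ to a continuous function. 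Every regularity gain there happens inside spaces of compactly supported objects, which is precisely the control your decomposition loses. If you want to keep your route, the fix is to prove a self-contained lemma describing $A_j$ on continuous functions that are constant in some coordinate directions and polynomially bounded, and to run the induction over that larger class.
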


\begin{remark}
Proposition \ref{prop:anty.make.cont} holds more generally for $u$ in $\cS'(\RR^n)$ instead of $\scD'_c(\R^n)$, but we will not need this fact and we don't show it here.
\end{remark}

This type of statements is rather standard (see for instance \cite[Théorème XXVI]{Schwartz}) and used for example in classical proofs of the regularity theorem for solutions of elliptic equations.
For completeness we include a proof here. One of the most common techniques to prove such a statement is the notion of Sobolev spaces. Since we are not interested in sharp values for $k$, we choose a more elementary approach instead.


\begin{notation}
For a compact $K\subset \R^n$ and any $k\ge 0$ and $p>0$ denote $$C^{k}_K(\R^n)=\{f\in C^{k}(\R^n)\mid \Supp(f)\subset K\}$$ and $$L^{p}_K(\R^n):=\{f\in L^{p}(\R^n)\mid \Supp(f)\subset K\},$$ where $\Supp(f)$ stands for the support of $f$.
\end{notation}
Let $K$ be $K_0^n\subset \R^n$ where $K_0\subset \R$ is a compact interval that contains the support $\Supp(\rho)$ of $\rho$.
The following lemma is straight-forward.
\begin{lemma}\label{lem:anty.dual.make.L1.sm}
For $K$ being $K_0^n$ as above, one has
\begin{enumerate}
\item $A'(C^\infty_K(\R^n)) \subset C^\infty_K(\R^n)$.
\item $A'$ can be continuously extended to operators:
\begin{enumerate}
	\item\label{lem:anty.dual.make.L1.sm:1} $A': C^m_K(\R^n)\to C^{m+1}_K(\R^n)$, for all integer $m\geq 0$.
\item\label{lem:anty.dual.make.L1.sm:2} $A': L^\infty_K(\R^n)\to C^{0}_K(\R^n)$.
\item\label{lem:anty.dual.make.L1.sm:3} $A': L^1_K(\R^n)\to L^\infty_K(\R^n)$.
\end{enumerate}
\end{enumerate}
\end{lemma}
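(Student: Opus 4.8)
The plan is to reduce everything to the two building blocks $B_i$ and $C_i$ of Notation \ref{not:ad} and then read off each statement for $A'=A_1'\circ\cdots\circ A_n'$ by composition, using three elementary facts. First, since $\int_\R\rho=1$, the function $C_i(g)$ has vanishing integral in the $x_i$-variable; hence $B_i(C_i(g))$ vanishes for $x_i>\sup K_0$ (there the full integral is $0$), for $x_i<\inf K_0$ (there the integrand is $0$), and whenever some $x_j\notin K_0$ with $j\neq i$ (there both terms of $C_i(g)$ vanish). Thus each $A_i'$, and therefore $A'$, maps functions supported in $K=K_0^n$ to functions supported in $K$. Second, differentiating the defining integral of $B_i$ gives the key identity
\[
\frac{\partial}{\partial x_i}A_i'(g)=-C_i(g),
\]
so $A_i'$ raises regularity by one order \emph{in the $x_i$-direction}. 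Third, for $i\neq j$ the operators $A_i'$ and $C_j$ commute with $\partial_{x_j}$ (differentiation under the integral sign), and each of $C_i,A_i'$ preserves the regularity classes at hand. Item (1) is then immediate: for $g\in C^\infty_K$ every iterated integral is smooth and the support stays in $K$, so $A'(C^\infty_K)\subset C^\infty_K$.

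For (2a) I would first note that each $A_i'$ maps $C^m_K$ boundedly into itself, since its formula only involves integration over the compact interval $K_0$ and multiplication by the smooth $\rho$. The gain of one \emph{full} derivative is a feature of the composition, extracted direction by direction. Fix a multi-index $\alpha$ with $|\alpha|=m+1$ and choose $j$ with $\alpha_j\geq1$. Commuting $\partial_{x_j}$ past the factors $A_i'$ with $i\neq j$ (and past $C_j$) and using $\partial_{x_j}A_j'=-C_j$ yields
\[
\frac{\partial}{\partial x_j}A'(g)=\Big(\prod_{i\neq j}A_i'\Big)\bigl(-C_j(g)\bigr),
\]
and the right-hand side lies in $C^m_K$ because $C_j(g)\in C^m_K$ and each $A_i'$ preserves $C^m_K$. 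Consequently $\partial^\alpha A'(g)=\partial^{\alpha-e_j}\bigl(\partial_{x_j}A'(g)\bigr)$ exists and is continuous (the remaining order is $m$), so $A'(g)\in C^{m+1}_K$, with the accompanying operator bound coming from the boundedness of the $A_i'$ and $C_j$ on $C^m_K$.

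For (2b) and (2c) I would work again at the level of $B_i,C_i$. For (2b): $C_i$ is bounded on $L^\infty_K$ (the integral of a bounded, compactly supported function is controlled by $\|g\|_\infty$), and $B_i$ sends $L^\infty_K$ into $C^0_K$, since the primitive of a bounded compactly supported function is Lipschitz in $x_i$ and, by dominated convergence, jointly continuous; as $C^0_K\subset L^\infty_K$ the remaining factors keep us inside $C^0_K$, giving $A'\colon L^\infty_K\to C^0_K$. For (2c) the point is that one integration does not suffice, whereas $A'$ integrates in \emph{all} $n$ variables: expanding via $C_i(g)=g-\rho(x_i)\int_\R g\,dx_i$ writes $A'$ as a finite sum of terms, each of which is a full iterated integral of a partial integral of $g$ multiplied by bounded factors (the function $\rho$ and its primitive $x_i\mapsto\int_{-\infty}^{x_i}\rho$, which takes values in $[0,1]$). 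Each such iterated integral is bounded in absolute value by $\|g\|_{L^1}$, so $\|A'(g)\|_{L^\infty}\le C\|g\|_{L^1}$, and the support lies in $K$ by the first fact; hence $A'\colon L^1_K\to L^\infty_K$.

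I expect the main obstacle to be item (2a). A single anti-derivative $A_i'$ raises regularity only in its own direction — for instance $B_i$ applied to a product $\phi(x_i)h(x_j)$ retains the (possibly poor) $x_j$-regularity of $h$ — so the gain of one derivative across \emph{all} directions is a genuinely $n$-dimensional phenomenon that must be teased out of the composition through the commutation relations and the identity $\partial_{x_i}A_i'=-C_i$ above. The $L^1\to L^\infty$ estimate in (2c) relies on the same ``use all $n$ integrations at once'' idea, but beyond that expansion it is routine bookkeeping.
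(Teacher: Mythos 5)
The paper declares this lemma ``straight-forward'' and gives no proof, so there is nothing to compare against; judged on its own, your argument for the support statement, for item (1), for (2a) and for (2c) is sound. But there is one step in (2b) that is genuinely false: the claim that a single $B_i$ sends $L^\infty_K(\R^n)$ into $C^0_K(\R^n)$. Integrating in the $x_i$-variable alone makes the output Lipschitz in $x_i$, but it does nothing for the other variables, and ``dominated convergence'' does not rescue you because for a general $L^\infty$ function $h$ the slices $y\mapsto h(x_1,\dots,y,\dots,x_n)$ need not converge pointwise a.e.\ as the parameters $x_j$ ($j\neq i$) vary. Concretely, for $n=2$ take $h=\mathbf{1}_{\{x_1>0\}}\cdot\mathbf{1}_{K}$; then $B_2(h)(x_1,x_2)$ is a nonzero multiple of $\mathbf{1}_{\{x_1>0\}}$ for $x_2$ in the interior of $K_0$, hence discontinuous in $x_1$. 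So the inductive scheme ``each $A_i'$ lands in $C^0_K$, and $C^0_K\subset L^\infty_K$ lets the next factor proceed'' does not get off the ground.

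The statement (2b) is nevertheless true, and the repair is exactly the ``use all $n$ integrations at once'' idea you invoke for (2c). Each $A_i'$ applied to an $L^\infty_K$ input produces a function that is Lipschitz in $x_i$ with constant controlled by $\|\cdot\|_\infty$, \emph{uniformly in the remaining variables}; and each subsequent $A_{i'}'$ ($i'\neq i$), being an integral over a compact interval in the $x_{i'}$-direction with $x_i$ as a parameter, preserves this uniform Lipschitz property in $x_i$ (up to a factor $|K_0|$). After composing all $n$ operators, $A'(u)$ is separately Lipschitz in each variable, uniformly, and a telescoping estimate of the form $|F(x)-F(x')|\le L\sum_{i=1}^n|x_i-x_i'|$ then gives joint continuity (this is precisely the bound the paper records for the operator $D=D_1\circ\cdots\circ D_n$ in Lemma \ref{cor:anty.make.L1.cont}). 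With (2b) argued this way, the rest of your proof stands.
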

\begin{lemma}\label{cor:anty.L1} \label{cor:anty.make.L1.cont}
	For any $i \in \{1,\dots,n\}$, we have
	\begin{equation}\label{keyformula}
		A_i|_{L^{1}_{K}(\R^n)}=({\rm Id}-E_i)\circ D_i
	\end{equation}
	where for any $u \in L^{1}_{K}(\R^n)$, for any $x \in \RR^n$,
	\[D_i(u)(x_1,\dots,x_n):=\int_{-\infty}^{x_i} u(x_1,\dots, x_{i-1},y,x_{i+1},\dots,x_n)\mathrm{d}y\]
	and
	\[E_i(u)(x_1,\dots,x_n):=\int_{-\infty}^{\blue{+\infty}} u(x_1,\dots, x_{i-1},y,x_{i+1},\dots,x_n)\rho(y)\mathrm{d}y.\]
	Here, 
	${\rm Id}$ stands for the identity map.
Furthermore, the following hold.
	\begin{enumerate}
		\item \label{cor:anty.compact} If $u$ is in $L^{1}_{c}(\R^n)$, then also $A(u)$ has compact support.
		\item \label{cor:anty.make.L1.cont:1} $A(L^{1}(\R^n)) \subset L^{\infty}(\R^n)$.
		\item \label{cor:anty.make.L1.cont:2} $A(L^{\infty}_{K}(\R^n)) \subset C^0(\R^n)$.
	\end{enumerate}
\end{lemma}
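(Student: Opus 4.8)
The plan is to prove the operator identity (\ref{keyformula}) by computing the formal transpose of $A_i'=B_i\circ C_i$ with respect to the Lebesgue pairing, and then to read off the three ``Furthermore'' items from the mapping properties of the two building blocks $D_i$ (antiderivative in the $i$-th variable) and $E_i$ (averaging against $\rho$ in the $i$-th variable), together with the commuting composition $A=A_1\circ\cdots\circ A_n$.

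First I would establish (\ref{keyformula}). For $u\in L^1_K(\R^n)$ and $f\in C_c^\infty(\R^n)$, Notation \ref{not:ad} gives $\la A_iu,f\ra=\la u,B_i(C_if)\ra=\int_{\R^n}u\cdot B_i(C_if)$, so it suffices to identify the transpose of $A_i'$ with $({\rm Id}-E_i)\circ D_i$. Working in the single variable $x_i$ with the other variables as spectators, Fubini (legitimate since $u$ is $L^1$ and $f$ a test function) shows that the transpose of $B_i$ sends $u$ to $x\mapsto -\int_{x_i}^{+\infty}u(\dots,y,\dots)\,\mathrm{d}y$, which equals $D_iu$ minus the total integral $x\mapsto\int_{-\infty}^{+\infty}u(\dots,y,\dots)\,\mathrm{d}y$, and that the transpose of $C_i$ is ${\rm Id}-E_i$. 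Composing, the transpose of $A_i'$ is $({\rm Id}-E_i)$ applied to $D_i$ minus this total-integral term. The only nonformal point is the final cancellation: the total integral does not depend on $x_i$, and since $\int_\R\rho=1$ the operator $E_i$ acts as the identity on $x_i$-independent functions, so $({\rm Id}-E_i)$ annihilates that term. This leaves exactly $({\rm Id}-E_i)\circ D_i$, proving (\ref{keyformula}). As a sanity check the formula is consistent with Lemma \ref{lem:anty.char}: $\partial_{x_i}D_iu=u$ while the $E_i$-term is $x_i$-independent, and $E_i(A_iu)=0$, matching $\la A_iu,\rho_{i,f}\ra=0$.

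With (\ref{keyformula}) available, items (2) and (3) reduce to elementary regularity of $D_i$ and boundedness of $E_i$. For (2), the antiderivative of an $L^1$ function is bounded, so $D_i$ maps $L^1$ into $L^\infty$ and $E_i$ maps bounded functions to bounded functions; propagating this through the commuting composition $A=\prod_i({\rm Id}-E_i)D_i$, while keeping the intermediate functions integrable in the not-yet-treated variables, yields $A(L^1)\subset L^\infty$. For (3), the antiderivative of a bounded compactly supported function is locally Lipschitz, hence continuous, so $D_i$ maps $L^\infty_K$ into $C^0$ and $E_i$ preserves continuity; composing over $i$ and invoking Lemma \ref{lem:anty.dual.make.L1.sm} to control supports gives $A(L^\infty_K)\subset C^0$.

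For (1) I would track supports through the same formula: $D_iu$ inherits the compact support of $u$ in every variable transverse to $x_i$, while $E_i(D_iu)$ is a function of the transverse variables alone, supported in the transverse projection of $\Supp(u)$; combined with the compact support of $\rho$ this is meant to confine $A(u)$ to a fixed compact set once all $n$ factors are applied. The hard part will be exactly this bookkeeping for (1) and (2): after applying $A_n$ the result no longer lies in $L^1_K$, so one cannot iterate (\ref{keyformula}) verbatim and must instead argue directly on the explicit function $\prod_i({\rm Id}-E_i)D_i\,u$, simultaneously controlling its support, its integrability in the variables not yet integrated, and (for (3)) its continuity. Once this is organised coordinate by coordinate the three assertions follow, the transpose computation for (\ref{keyformula}) itself being routine modulo the single cancellation $({\rm Id}-E_i)\circ(\text{integration over }x_i)=0$ isolated above.
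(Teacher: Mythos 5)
Your transpose computation of (\ref{keyformula}) is correct, and it supplies exactly the ``direct computation'' that the paper omits: the adjoint of $B_i$ is $D_i-I_i$ (with $I_i$ the full integral in the $i$-th variable), the adjoint of $C_i$ is ${\rm Id}-E_i$, and $({\rm Id}-E_i)\circ I_i=0$ because $I_i(u)$ is independent of $x_i$ and $\int\rho=1$. Your route to items (\ref{cor:anty.make.L1.cont:1}) and (\ref{cor:anty.make.L1.cont:2}) is also the paper's: commute $E_j$ past $D_i$ for $i\neq j$, observe that $E_j$ preserves $L^\infty$ and $C^0$, and reduce to the elementary mapping properties of $D=D_1\circ\cdots\circ D_n$.

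The genuine gap is item (\ref{cor:anty.compact}), precisely the step you defer as ``bookkeeping''. That bookkeeping cannot be carried out, because with the formula you have just proved the claim fails. Take $n=1$ and $u=\mathbf{1}_{[0,1]}$. Then $A(u)=D(u)-c$ with $c:=E(D(u))=\int_{\R}D(u)(y)\rho(y)\,\mathrm{d}y$ a \emph{constant}; since $D(u)\equiv 0$ far to the left and $D(u)\equiv 1$ far to the right, $A(u)$ equals $-c$ on one unbounded ray and $1-c$ on another, and these two constants cannot both vanish. The mechanism is the one you yourself describe: $E_i(D_iu)$ depends only on the transverse variables, so subtracting it from $D_iu$ can never cancel the constant tail $I_i(u)$ that $D_iu$ acquires for $x_i$ large (it would do so only if $E_i(D_iu)=I_i(u)$, which requires $\Supp(\rho)$ to lie entirely to the right of $\Supp(u)$ in the $i$-th variable), and the remaining factors $A_j$ act in other variables and cannot repair this. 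What is true, and what your support tracking actually yields, is that $A_i(u)$ is \emph{constant} in $x_i$ outside a compact interval and is supported in the slab (transverse projection of $\Supp(u)$)$\times\R$.

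You should also be aware that the paper's own proof of (\ref{cor:anty.compact}) commits the corresponding slip: in the displayed computation for $\abs{x_i}>R$ it evaluates $E_i(D_iu)(x)$ as $\int_{\R}\bigl(\int_{-R}^{R}u\,\mathrm{d}z\bigr)\rho(y)\,\mathrm{d}y$, i.e.\ as if $D_iu(\ldots,y,\ldots)$ equalled the full integral for $y\in\Supp(\rho)$; that identity holds for the operator $D_i\circ C_i$ applied directly to $u$, but not for its adjoint $({\rm Id}-E_i)\circ D_i$, which is what $A_i$ is. So rather than trying to close your sketch, the honest conclusion is that (\ref{cor:anty.compact}) must be weakened (e.g.\ to local constancy outside a compact set), and that its downstream use in the proof of Proposition \ref{prop:anty.make.cont} --- where it is invoked to place $A^{k+3}u$ in $L^\infty_K$ before applying item (\ref{cor:anty.make.L1.cont:2}) --- needs to be rerouted, for instance by cutting off with a $\cCexp$-class test function or by proving continuity of $D(u)$ for $u\in L^1$ directly by dominated convergence.
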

\begin{proof}
	The equality (\ref{keyformula}) follows from a direct computation.
	\begin{enumerate}
		\item
			If $u$ is $L^{1}_{c}(\RR^n)$ with compact support in some $[-R,R]^n$ then $A_i(u)$ has compact support in $[-R,R]^n$. Indeed, let $x \in \RR^n$. If there is $j\neq i$ with $\abs{x_j}>R$, then, $A_i(u)(x)=0$ by integration of the zero function. If for all $j\neq i$, $\abs{x_j}\leq R$ and $\abs{x_i}>R$ then, we have
			\[
				\begin{array}{lcl}
					A_i(u)(x) & = & \int_{-R}^{R}u(x_1,\dots, x_{i-1},y,x_{i+1},\dots,x_n)\mathrm{d}y \\ \\
					& - & \int_{-\infty}^{\infty} \left(\int_{-R}^{R}u(x_1,\dots, x_{i-1},z,x_{i+1},\dots,x_n)\mathrm{d}z\right)\rho(y)\mathrm{d}y = 0
				\end{array}
			\]
			because $\int_{\RR} \rho(y)dy = 1$. 			
			Then, by composition we get that $A(u)$ has compact support contained in $[-R,R]^n$. Property (\ref{cor:anty.compact}) of the lemma follows.
		\item
			Note that $E_i$ commutes with $D_j$ if $i\neq j$.
			Note also that $E_i$ preserves the spaces $\blue{L^{\infty}(\R^n)}$ and $ C^0(\R^n)$ so it is enough to show statements
			\eqref{cor:anty.make.L1.cont:1} and \eqref{cor:anty.make.L1.cont:2} when $A$ is replaced with $D:=D_1\circ\cdots \circ D_n$. But this follows from: for any $u \in L^1(\RR^n)$
		$$D(u)(x_1,\dots,x_n)=\int_{-\infty}^{x_1}\dots \int_{-\infty}^{x_n}  u(y_1,\dots,y_n)\blue{\mathrm{d}y_n\dots \mathrm{d}y_1},$$
			which is a bounded function by integrability of $u$.
			Furthermore for any $u \in L^{\infty}_{K}(\RR^n)$, \blue{assuming $K \subset [-R,R]^n$ with $R>0$}, we can check that $D(u)$ is continuous: for any $x,x' \in \RR^n$, we have
			\[\abs{D(u)(x)-D(u)(x')}\leq \blue{(2R)^{n-1}\sum_{i=1}^{n}\abs{x_i-x_i'}\norm{u}_{\infty}}.\]
Thus, properties (\ref{cor:anty.make.L1.cont:1}) and (\ref{cor:anty.make.L1.cont:2}) of the lemma are proved and the proof is complete.
	\end{enumerate}
\end{proof}
Now we can prove Proposition \ref{prop:anty.make.cont}
\begin{proof}[Proof of Proposition \ref{prop:anty.make.cont}]
Let us first give an overview of the proof, and then give full details.
Let $K\subset \R^n$  be a compact that contains an open neighborhood of the support $\supp u$ of $u$.
Consider $u$ as a functional on the Fr\'echet space $C^\infty_K(\R^n)$. By definition $u$ can be continuously extended to $C^k _K(\R^n)$ for some finite $k \ge 0$.  Thus, for any compact $K'\subset \R^n$, $u$ can be continuously extended to $C^k _{K'}(\R^n)$. Therefore, Lemma \ref{lem:anty.dual.make.L1.sm}\eqref{lem:anty.dual.make.L1.sm:1} implies that $A^k  u$ can be continuously extended to $C^0_{c}(\R^n)$, when $k$ is large enough. In other words, $A^k  u$ is a measure on $\R^n$. Applying Lemma \ref{lem:anty.dual.make.L1.sm}(\ref{lem:anty.dual.make.L1.sm:2},\ref{lem:anty.dual.make.L1.sm:3}) we obtain that $A^{k +2} u$ can be continuously extended to $L^1_{c}(\R^n)$. This implies that $A^{k +2} u$ is an absolutely continuous measure, and thus by the Radon-Nikodim Theorem it is given by a locally $L^1$ function (which has moreover compact support).
Thus, by Lemma \ref{cor:anty.make.L1.cont}, $A^{k +4} u$ is a continuous function.

With full details, the proof goes as follows.
	\begin{enumerate}
		\item As we assume $u$ to be a distribution with compact support, $u$ has a finite order $k$ by \cite[\S 2.3]{Hormander83}.
		\item By \cite[Theorem 2.1.6]{Hormander83}, there is a unique extension to $C^k_{c}(\RR^n)$. This extension is given in the following way. As in the proof of Proposition \ref{prop:gen}, for any $\phi \in C^k_{c}(\RR^n)$, we consider for any $\eps>0$, the function
	$$ \Psi_{\eps}:x \mapsto \frac{1}{\eps^n}\Psi\left( \frac{x}{\eps} \right).$$
	The function $\phi_\eps:=\Psi_{\eps}*\phi$ belongs to $C_{c}^{\infty}(\RR^n)$ and we have the convergence $\Psi_\eps * \phi \to \phi$ for the Fr\'echet space $C^k_{c}(\RR^n)$, meaning $\sup \left|\partial^{\alpha}\phi - \partial^{\alpha}\phi_\eps\right| \underset{\eps \to 0}{\to} 0$ for $\left|\alpha\right|\leq k$. Thus, one has
	$$\la u,\phi \ra := \lim_{\eps \to 0} \la u,\phi_\eps \ra.$$
	As in the proof of Proposition \ref{prop:gen}, one has that $\la u,\phi_\eps \ra$ equals $\la u,\Psi_{\eps}*\phi\ra$ which furthermore equals
$$\la u\otimes \phi,(y,t)\mapsto 1_{\Supp{\phi}}(t)\Psi_{\eps}(y-t)\ra = \la \phi,t\mapsto \la u,y\mapsto \Psi_{\eps}(y-t)\ra \ra.$$
Thus, $\la u,\phi \ra$ equals
\begin{equation}\label{eq:lim-int}
\lim_{\eps \to 0} \int_{\RR^n}\phi(x)\la u,y\mapsto \Psi_{\eps}(y-x) \ra \mathrm{d}x =
\int_{\RR^n}\phi(x)\left(\lim_{\eps \to 0} \la u,y\mapsto \Psi_{\eps}(y-x)\ra \right)\mathrm{d}x.
\end{equation}

\item  By Lemma \ref{lem:anty.dual.make.L1.sm}\eqref{lem:anty.dual.make.L1.sm:1}, $A^k u$ can be continuously extended to $C^0_{c}(\RR^n)$, and so, $A^k u$ is a measure.
\item  By Lemma \ref{lem:anty.dual.make.L1.sm}(\ref{lem:anty.dual.make.L1.sm:2},\ref{lem:anty.dual.make.L1.sm:3}), $A^{k+2}u$ can be continuously extended to $L^1_c(\RR^n)$ and $A^{k+2}u$ is still a measure.
\item For any measurable set $E$ of measure 0 for the Lebesgue measure, the characteristic function $1_E$ is equal to 0 in $L^1_c(\RR^n)$, so $(A^{k+2}u)(1_E)=0$, and thus $E$ has also zero measure for the measure $A^{k+2}u$. Thus, $A^{k+2}u$ is absolutely continuous against the Lebesgue measure, and then by the Radon-Nikodim theorem it is locally given by an $L^1$ function, with compact support.
\item By Lemma \ref{cor:anty.make.L1.cont}, $A^{k+4}u$ is a continuous function.
\end{enumerate}
This concludes the proof of Proposition \ref{prop:anty.make.cont}. 
\end{proof}

\begin{remark}
Note that the argument given for Proposition \ref{prop:anty.make.cont} is far from being sharp, and the $+4$ can probably be improved using analysis of Sobolev spaces, but it is completely unimportant for our purposes.
\end{remark}

We are now ready to give the proofs of Theorem \ref{thm:derivative} and Corollary \ref{thm:gen.sm}.

\begin{proof}[Proof of Theorem \ref{thm:derivative}]
Follows directly from Propositions \ref{prop:anti-Cexp} and \ref{prop:anty.make.cont} and item (\ref{item:1:anty}) of Lemma \ref{lem:anty.char}.
\end{proof}

\begin{proof}[Proof of Corollary \ref{thm:gen.sm}]
It is enough to prove the statement for $u$ with compact support. So, suppose that $u$ is a $\cCexp$-class distribution on an open set $U\subset \RR^n$ and that $u$ has compact support. Consider a continuous function $g$ of $\cCexp$-class, as given by Theorem \ref{thm:derivative} (so that a certain iterated partial derivative of $g$ equals $u$). 
By Proposition \ref{prop:loci}, $g$ is almost everywhere analytic and, furthermore, there is a subanalytic dense open subset $U'$ of $U$ on which $g$ is analytic.  Hence, also $u$ is analytic on $U'$ (as a finite iteration of partial derivatives of $g$).
\end{proof}

\section{A resolution result for subanalytic sets and functions}\label{sec:resol}


Before stating our resolution result we slightly generalize some basic notions.

\begin{defn}\label{def:unit:mon}
Call a function $f:A\subset \RR^n\to\RR^k$ analytic if there is an open set $U\subset \RR^n$ containing $A$ and an analytic function $f_U$ on $U$ whose restriction to $A$ is $f$.

Call a function $u:A\subset \RR^n \to \RR$ an analytic unit if there is an open set $U\subset \RR^n$ containing $A$ and a non-vanishing analytic function $u_U$ on $U$ whose restriction to $A$ is $u$. 


By a monomial we mean a polynomial of the form $d\cdot\prod_{i=1}^m x_i^{\mu_i}$ for some integers  $m\ge 0$ and $\mu_i\geq 0$ and some  $d\in \RR$.

Say that a subanalytic set $X$ has pure dimension $m$, if it has local dimension $m$ at each $x\in X$.
\end{defn}


Note that for a compact subanalytic set $A\subset \RR^n$, an analytic unit on $A$ is automatically subanalytic. 



The following is our main resolution (or rather, alteration) result, for subanalytic functions. It has the flavor of an alteration result since it allows in particular transformations by power maps $x\to x^r$ for integers $r>0$, in each of the coordinates.

\begin{theorem}[Resolution result for  subanalytic  sets and functions]\label{thm:res:def}
Let $X\subset [0,1]^n$ be a closed
subanalytic set of pure dimension $m$ and let $f:X\to [0,1]^k$ be a subanalytic function for some $k\ge 0$ and $n\geq m\geq 0$.
Then there exist finitely many open subsets $U_i\subset X$ and
subanalytic, analytic, proper functions 
$$
\phi_i: [0,1]^m\to X
$$
such that the following hold for each $i$ 
\begin{enumerate}

\item\label{thm:res:def:1}
The set $U_i$
is a real analytic submanifold of $\RR^n$ of dimension $m$, and
 $\phi_i$ restricts to an analytic isomorphism (of analytic manifolds) between $(0,1)^m$ and $U_i$. 

\item\label{thm:res:def:4} The $U_i$ are pairwise disjoint, and the union of the $U_i$ is open dense in $X$.

\item\label{thm:res:def:6} If $m=n$, then there is an analytic unit $u_i$ on $[0,1]^m$ and a nonzero monomial $M_i$
such that
\begin{equation}\label{fjvarphii}
\Jac(\phi_i)(x) = u_i(x) M_i(x) \mbox{ for each $x$ in $(0,1)^m$,} 
\end{equation}
where $\Jac(\phi_i)(x)$ stands for the Jacobian of $\phi_i$ at $x$. 

\item\label{thm:res:def:3} There are analytic units $u_{ij}$ on $[0,1]^m$ and monomials $M_{ij}$
such that for each component $f_j$ of $f$ one has
\begin{equation}\label{fjvarphii2}
f_j (\phi_i (x)) = u_{ij}(x) M_{ij}(x) \mbox{ for each $x$ in $(0,1)^m$}, 
\end{equation}
with furthermore
$u_{i1}=1$ or $M_{i1}=1$. 
\end{enumerate}
\end{theorem}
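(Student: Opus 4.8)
The plan is to build the $\phi_i$ by combining a parametrization of the top-dimensional part of $X$ with a simultaneous monomialization of $f$ and of the data needed to make that parametrization analytic. First I would reduce to the equidimensional situation. By subanalytic cell decomposition the smooth locus $X^{\mathrm{sm}}$, where $X$ is an analytic manifold of dimension $m$, is open and dense in $X$ with $\dim(X\smallsetminus X^{\mathrm{sm}})<m$ (see \cite{LR}, \cite{Paru2}, \cite{MillerD}); since item~\ref{thm:res:def:4} only asks for a dense open union, the lower-dimensional part can be discarded. On each piece of a finite subanalytic partition of $X^{\mathrm{sm}}$ I would, after permuting coordinates, write $X$ as the graph $x''=h(x')$ of an analytic subanalytic map $h$ over an open subanalytic set $W\subset\RR^m$ in the first $m$ coordinates $x'$, with analytic section $s\colon W\to X$, $s(x')=(x',h(x'))$. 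This turns the problem into rectilinearizing, on the full-dimensional set $W$, the finite family consisting of all components of $h$ together with all $f_j\circ s$; the components of $h$ are included precisely so that $s$ becomes analytic after pullback, which is what lets $\phi_i:=s\circ\psi_i$ extend analytically to the closed cube.

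The core step is the resolution (alteration) statement in dimension $m$: for a finite family of bounded subanalytic functions on $W\subset\RR^m$ there exist finitely many subanalytic, analytic, proper maps $\psi_i\colon[0,1]^m\to\overline{W}$, each an analytic isomorphism from $(0,1)^m$ onto an open piece, with the pieces covering $W$ up to a set of dimension $<m$, such that every function in the family pulls back to an analytic unit times a monomial; moreover, being a composition of local analytic isomorphisms and monomial (power) maps, each $\psi_i$ has $\Jac(\psi_i)$ equal to a unit times a monomial. I would take this from the subanalytic rectilinearization/preparation machinery (\cite{Paru2}, \cite{LR}, \cite{MillerD}), applied to $\{h_l\}\cup\{f_j\circ s\}$. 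Setting $\phi_i:=s\circ\psi_i$ then gives analytic, proper maps $[0,1]^m\to X$ (properness is automatic, as the source is compact and $X$ is closed in $[0,1]^n$), analytic isomorphisms $(0,1)^m\iso U_i$, which is item~\ref{thm:res:def:1}, while the monomialization of the $f_j\circ s$ gives item~\ref{thm:res:def:3}. Item~\ref{thm:res:def:6} concerns only $m=n$, where the graph is trivial, $s=\mathrm{id}$, $\phi_i=\psi_i$, and $\Jac(\phi_i)$ is a unit times a monomial as above. For the disjointness in item~\ref{thm:res:def:4} I would refine the finite collection of open pieces, removing overlaps along lower-dimensional subanalytic subsets and keeping the resulting open analytic submanifolds, which does not affect density.

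It then remains to arrange the normalization $u_{i1}=1$ or $M_{i1}=1$ for the first component. I would first split $X$ by the sign of $f_1$: where $f_1\equiv 0$ one takes $M_{i1}=0$, $u_{i1}=1$; where after monomialization $f_1\circ\phi_i=u\,x^{\mu}$ with $\mu=0$, the pullback is a unit and one absorbs the constant to get $M_{i1}=1$. On the remaining charts, where $\mu\neq 0$, I would treat $f_1$ as the distinguished function in the resolution and arrange that its pullback is a \emph{pure} monomial, so $u_{i1}=1$: concretely one adapts the charts to $f_1$ on the open stratum where $\mathrm{d}f_1\neq 0$, routes the locus where $f_1$ is locally constant into the $M_{i1}=1$ case, and handles the lower-dimensional critical locus recursively.

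I expect the main obstacle to be exactly this core rectilinearization carried out \emph{uniformly and simultaneously}: producing maps on the single standard cube $[0,1]^m$ that at once monomialize every component of $f$, keep the Jacobian a unit times a monomial, normalize the first component to a pure monomial or pure unit, and yield disjoint charts with dense union. Reconciling the pure-monomial normalization of $f_1$ with the unit-times-monomial form of the other components is the delicate point, since the standard resolution only delivers unit-times-monomial form, and forcing the first component to be a pure monomial pulls the coordinates in a direction that can clash with the monomial form of the remaining $f_j$; carrying this out compatibly, while keeping every map subanalytic, analytic and proper on the closed cube, is where most of the work lies.
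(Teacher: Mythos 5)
Your overall architecture matches the paper's: the paper also proves this via the Lion--Rolin/Parusi\'nski/Miller preparation machinery, in the packaged form of Theorem 1.5 of \cite{CluckersMiller03} (stated as Proposition \ref{propCM}), which directly yields the charts $\phi_i:(0,1)^m\to U_i\subset X$ with every $f_j\circ\phi_i$ and $\Jac(\phi_i)$ of the form unit times monomial; your extra detour through a graph parametrization $x''=h(x')$ and rectilinearization of $\{h_l\}\cup\{f_j\circ s\}$ on the base is not needed for that (though it is not wrong), since the cited rectilinearization already handles subanalytic sets of arbitrary dimension inside $\RR^n$. Items (1), (2), (3) and the first part of (4) come out of that result essentially as you describe, after retaining only the full-dimensional pieces and noting that boundedness of the ranges forces the monomial exponents to be non-negative.

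The genuine gap is the final normalization ``$u_{i1}=1$ or $M_{i1}=1$'', which you correctly identify as the crux but do not actually prove: ``adapt the charts to $f_1$ on the stratum where $\mathrm{d}f_1\neq 0$ and handle the critical locus recursively'' is a restatement of the goal, and the naive move of taking $f_1$ as a new coordinate would destroy the unit-times-monomial form of the remaining $f_j$ and of the Jacobian, exactly the clash you flag. The paper's resolution is a concrete shear that perturbs only one coordinate: writing $f_1\circ\phi_i=u_{i1}(x)\,d\prod_j x_j^{a_j}$ with $a_m>0$, set $w_i:=u_{i1}^{1/a_m}$ (again an analytic unit on $[0,1]^m$) and compose with the inverse of $\psi_i:x\mapsto(x_1,\ldots,x_{m-1},x_m w_i(x))$, which absorbs the unit into the last variable and turns $f_1\circ\phi_i$ into a pure monomial while multiplying every other prepared function only by units in the new coordinates. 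Since $\psi_i$ need not map cubes to cubes, one then decomposes $[0,1]^m$ into finitely many special subanalytic cells on which either $x_m\neq 0$ (there $x_m^{a_m}$ is a unit and one inducts on the number of indices $j$ with $a_j>0$) or $\psi_i$ is an analytic diffeomorphism onto a closed box, which is mapped back to $[0,1]^m$ by an affine change of variables. Without this (or an equivalent) explicit mechanism, the property that the proof of Theorem \ref{mainthm} actually relies on --- that the oscillatory variable $t$ pulls back to a pure monomial --- is not established.
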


\begin{remark}
The final property of (\ref{thm:res:def:3}) about $f_1(\phi_i)$ being equal to $u_{i1}$ or $M_{i1}$ will be used for the reduction to the base case of Proposition \ref{cor:base-case-u} in the proof of Theorem A for locally integrable $u$. \change{Also note that the monomials in Theorem \ref{thm:res:def} all have natural exponents by our Definition \ref{def:unit:mon}.}
\end{remark}

In fact, there are two natural proof strategies for Theorem \ref{thm:res:def}. One uses the Uniformization Theorem (4.19) from \cite{DvdD} and Hironaka's 's embedded resolution of singularities from  \cite{Hir:Res}, and the description of subanalytic functions by terms as in Corollary 2.5 of \cite{DriesMacinMar}. A second proof uses the Lion-Rolin,  Parusi\'nski,  Miller preparation result for subanalytic functions \cite{LR} \cite{Paru2}, \cite{MillerD} and the rectilinearization result in the form of Theorem 1.5 of \cite{CluckersMiller03}, which is based on the mentioned preparation result. As the first strategy via Hironaka's resolution resembles very much the proof in \cite{AizC} for the corresponding $p$-adic resolution result,
we will implement the second proof strategy. 

%
%

We use the following corollary of Theorem 1.5 of \cite{CluckersMiller03}. 
\begin{prop}\label{propCM}
Let $f:X\to \RR^k$ be a subanalytic function on a subanalytic set $X\subseteq\RR^n$ for some $k\ge 0$ and $n\ge 0$.  Then there exists a finite partition $\cU$ of $X$ into subanalytic, analytic submanifolds of $\RR^n$ such that for each $U\in\cU$, there exist $d\in\{0,\ldots,n\}$ and a subanalytic, analytic isomorphism $\phi:(0,1)^d\to U$, such that for each function $g$ in the set $\G$ defined by
\begin{equation}\label{eq:G}
\G = \begin{cases}
\{f_i\circ \phi\mid i=1,\ldots,k\}
    & \text{if $d < n$,} \\
\{f_i\circ \phi\mid i=1,\ldots,k\} \cup\{\Jac(\phi)\},
    & \text{if $d = n$,} \\
\end{cases}
\end{equation}
the function $g$ is either identically zero on $(0,1)^d$ or it may be written in the form
\[
g(z) = \left(\prod_{j=1}^{d} z_{j}^{r_j}\right) u(z)
\]
for $z$ in $(0,1)^d$, for some integers $r_j$ and an analytic unit $u$ on $[0,1]^d$.
\end{prop}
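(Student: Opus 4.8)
The plan is to derive Proposition \ref{propCM} directly from the rectilinearization Theorem 1.5 of \cite{CluckersMiller03}, applied to the finite family consisting of the components $f_1,\ldots,f_k$ of $f$. Recall that this rectilinearization result provides, for any finite collection of subanalytic functions on a subanalytic set $X\subseteq\RR^n$, a finite partition of $X$ into subanalytic analytic submanifolds, each of some dimension $d\in\{0,\ldots,n\}$, together with a subanalytic, analytic isomorphism $\phi:(0,1)^d\to U$ onto the corresponding piece $U$, in whose coordinates every function of the collection becomes either identically zero or of the form $\left(\prod_{j=1}^d z_j^{r_j}\right)u(z)$ with $r_j\in\ZZ$ and $u$ an analytic unit. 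First I would feed the tuple $(f_1,\ldots,f_k)$ into this theorem; the resulting partition $\cU$ and charts $\phi$ then satisfy the desired conclusion for all the $f_i\circ\phi$, which is exactly the content of $\G$ in the case $d<n$.

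The remaining point is the extra function $\Jac(\phi)$ that must also be prepared in the top-dimensional case $d=n$. Here I would use that the rectilinearization of \cite{CluckersMiller03}, being intended for use in change of variables and integration, prepares the Jacobian of its own charts alongside the given functions; concretely, the charts $\phi$ are obtained by composing elementary maps (local blow-up charts and power substitutions), whose Jacobians are of monomial-times-unit form, and this form is stable under composition, since products of monomials are monomials and products of analytic units are analytic units. Thus on each top-dimensional piece one has $d=n$, the map $\phi:(0,1)^n\to U$ is an analytic isomorphism onto an open subset of $\RR^n$, and $\Jac(\phi)$ is itself of monomial-times-unit form, so that the full set $\G$ is prepared.

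Finally, I would record that the unit $u$ in each expression can be taken analytic and non-vanishing on the closed cube $[0,1]^d$, rather than merely on $(0,1)^d$. This is guaranteed by the preparation underlying \cite{CluckersMiller03}, whose defining power series converge on an open neighborhood of the closure of the relevant chart image; see Remark \ref{rem:3.3:IMRN} for the correct reading of this convergence condition. Collecting the pieces over all $U\in\cU$ then yields the statement.

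I expect the only genuine difficulty to be the simultaneous preparation of the chart Jacobian and of the functions $f_i\circ\phi$ by one and the same chart $\phi$ in the case $d=n$; this is precisely what the change-of-variables form of Theorem 1.5 of \cite{CluckersMiller03} delivers, and it is the reason the Jacobian is listed in $\G$ only when $d=n$ (for $d<n$ the map $\phi$ is an immersion into $\RR^n$ and carries no square Jacobian). Everything else is a matter of matching the formulation of the cited theorem to that of the proposition.
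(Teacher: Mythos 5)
Your proposal treats the proposition as an essentially immediate restatement of Theorem 1.5 of \cite{CluckersMiller03}, with the Jacobian as the only point needing attention. That is not how the cited theorem reads, and the discrepancy is where the actual work of the paper's proof lies. As the paper uses it, Theorem 1.5 (in the case $m=0$) produces, on each piece $U_0$, a chart $\phi_0:B\times(0,1)^{d-\ell}\to U_0$ where $B$ is an open subanalytic cell in $(0,1)^\ell$ --- not the full cube $(0,1)^d$ --- and the prepared form has \emph{rational} exponents $r_j$, with the monomial involving only the last $d-\ell$ variables. Your first paragraph asserts that the theorem already delivers cube domains and integer exponents; it does not, and the proposition's statement (domain $(0,1)^d$, integers $r_j$) therefore does not follow by mere citation.

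The paper bridges this gap in two steps that are absent from your proposal. First, it composes with coordinatewise power maps $(z_i)\mapsto(z_i^r)$ to clear the denominators of the rational exponents. Second, and less trivially, it runs an induction on $n$ to prepare the cell walls $\alpha_i<\beta_i$ of $B$ (so that each is identically zero or an analytic unit on $\overline B$), and then reparametrizes $B\times(0,1)^{d-\ell}$ by $(0,1)^d$ via the explicit transformation $x_i=(\beta_i(x_{<i})-\alpha_i(x_{<i}))z_i+\alpha_i(x_{<i})$ for $i\le\ell$. Only after composing $\phi_0$ with these maps does one obtain a chart $\phi$ from the open unit cube with the stated prepared forms. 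Note also that these extra compositions change the chart, hence its Jacobian, so the preparedness of $\Jac(\phi)$ when $d=n$ has to be checked for the composite map, not just read off from the inclusion of $\Jac(\phi_0)$ in the theorem's prepared family (which is, incidentally, how the theorem handles the Jacobian --- it is listed among the prepared functions, rather than recovered from the structure of the charts as in your second paragraph). Your closing remark about the units extending to $[0,1]^d$ is consistent with the paper's Remark \ref{rem:3.3:IMRN}, but the core of the proof is the cell-to-cube reduction and the rational-to-integer exponent reduction, neither of which appears in your argument.
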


In the above proposition and theorem, 
each of $(0,1)^d$, $[0,1]^d$ and $\RR^d$ stand for $\{0\}$ when $d=0$.  

\begin{proof}[Proof of Proposition \ref{propCM}]
	This is Theorem 1.5 of \cite{CluckersMiller03} in the special case with $m=0$, with the extra conclusion that one can take $l=0$
when $m=0$ in Theorem 1.5 of \cite{CluckersMiller03}. Let us explain this. By Theorem 1.5 of \cite{CluckersMiller03} with $m=0$, we find a partition $\cU_0$ of $X$ into analytic, subanalytic submanifolds of $\RR^n$, such that for each $U_0$ in $\cU_0$ there are $d\ge \ell\ge 0$, an open analytic, subanalytic cell $B\subset (0,1)^\ell$
and a subanalytic, analytic isomorphism of analytic manifolds
$$
\phi_0:B\times (0,1)^{d-\ell}\to U_0,
$$
with the following properties. For each function $g_0$ in the set $\G_0$ defined by
\begin{equation}\label{eq:G1}
\G_0 = \begin{cases}
\{f_i\circ \phi_0\mid i=1,\ldots,k\}
    & \text{if $d < n$,} \\
\{f_i\circ \phi_0\mid i=1,\ldots,k\} \cup\{\Jac(\phi_0)\},
    & \text{if $d = n$,} \\
\end{cases}
\end{equation}
the function $g_0$ is either identically zero on $B\times (0,1)^{d-\ell}$ or it may be written in the form
\[
g(z) = \left(\prod_{j=\ell+1}^{d} z_{j}^{r_j}\right) u(z)
\]
for $z$ in $(0,1)^d$, for some rational numbers $r_j$ and an analytic unit $u$ on $\overline B\times [0,1]^{d-\ell}$, where $\overline B$ is the closure of $B$ in $\RR^\ell$. By composing further with a suitable power map
$$
(z_i)_i\mapsto (z_i^r)_i
$$
in each coordinate for some suitable integer $r>0$, we may suppose that the $r_j$ are integers. Furthermore, by induction on $n$, we may suppose that each of the cell walls of $B$ is also of that form, in particular, each of them is either identically zero or an analytic unit on $\overline B$. (For the notion of walls of cells, see Definition 4.2.5 of \cite{CPW}.) To fix notation, say that the variable $z_i$ has cell walls $\alpha_i(z_1,\ldots,z_{i-1})$ and $\beta_i(z_1,\ldots,z_{i-1})$, for $i=1,\ldots,\ell$, with $\alpha_i<\beta_i$. Now compose with one more transformation $\phi_1$, sending $z$ in $(0,1)^d$ to the tuple $x$ with first component $x_1 = (\beta_1-\alpha_1)z_1 + \alpha_1$, second component $x_2 = (\beta_2(x_1)-\alpha_2(x_1))z_2 + \alpha_2(x_1)$ and so on until the $\ell$-th component $x_\ell$, and then the identity $x_{\ell+i}$ for $i>0$ up to $i=d - \ell$. Now the so-obtained maps $\phi=\phi_0\circ \phi_1$ are as desired.
\end{proof}

\begin{proof}[Proof of Theorem \ref{thm:res:def}]
	All properties of the theorem except that one can take $u_{i1}=1$ or $M_{i1}=1$ for each $i$ as in (\ref{thm:res:def:3}) follow directly from Proposition \ref{propCM} from the extra conditions in Theorem \ref{thm:res:def} on $X$ and $f$ and some standard o-minimal properties. Indeed, one applies Proposition \ref{propCM} to $X$ and $f$ and one retains only the pieces of full dimension $m$ to get the correct conclusion right away. \change{In particular, one notes that the occurring integer exponents in Proposition \ref{propCM} have to be non-negative under the extra conditions of Theorem \ref{thm:res:def}, by the boundedness of the ranges and by the change of variables formula when $m=n$.} To achieve the remaining property
about $u_{i1}=1$ or $M_{i1}=1$ from the end of (\ref{thm:res:def:3}), fix $U_i$ and $\phi_i$ and assume that $M_{i1}$ is a non-constant monomial $d\prod_{i=1}^m x_i^{a_i}$, with integers $a_i\ge 0$, $a_m>0$ and nonzero $d$ in $\R$. We do induction on the number of $a_i$ with $a_i>0$.
Consider $w_i(x) := (u_{i1}(x))^{1/a_m}$ for $x$ in $[0,1]^m$. Then $w_i$ is also an analytic unit on $[0,1]^m$. Now consider the map
$$
\psi_i:[0,1]^m \to \RR^m :  x \mapsto (x_1 ,\ldots,x_{m-1},x_m w_i(x)).
$$

Call an open subanalytic cell $B$ in $(0,1)^m$ special if each cell wall is either constant, or, the restriction of an analytic unit on the closure $\overline B$ of $B$.
 By compactness of $[0,1]^m$ and the inverse function theorem, there is a finite collection of mutually disjoint special cells $B_{\ell}$ in $(0,1)^m$ such that
$$
\bigcup_{\ell}\overline B_\ell = [0,1]^m
$$
and such that for each $\ell$, either one has $x_m\not=0$ on $\overline B_\ell$, or, the map $\psi_i$ restricts to an analytic diffeomorphism on $\overline B_\ell\to \overline C_\ell$ onto $\overline C_\ell$, with $C_\ell\subset \RR^m$ an open box (namely, a Cartesian product of open intervals).  We can now easily finish. If $x_m\not=0$ holds on $\overline B_\ell$, then $x_m^{a_m}$ is an analytic unit on $\overline B_\ell$, and thus, we are done by induction on the number of $a_i$ with $a_i>0$ by transforming the cell $B_\ell$ into $(0,1)^m$ using the cell walls as in the proof of Proposition \ref{propCM}.  In the other case that $\psi_i$ restricts to an analytic diffeomorphism from $\overline B_\ell$ onto $\overline C_\ell$, one falls in a situation with $u_1=1$ after replacing $\phi_i$ by $\phi_i\circ \phi_{i\ell}\circ r_\ell$, where $\phi_{i\ell}$ is the inverse of the restriction of $\psi_i$ to $\overline B_\ell\to \overline C_\ell$ and $r_\ell:[0,1]^m\to \overline C_\ell$ the obvious bijection of the form $x\mapsto (r_ix_i+b_i)_i$ for some reals $r_i>0$ and $b_i\ge 0$ for $i=1,\ldots,m$.
\end{proof}

Essentially combining Theorem \ref{thm:res:def} with Proposition \ref{prop:mult}, we obtain the following result.

\begin{prop}\label{lem:u:int}
Let $u$ be a distribution on an open set $U$ in $\RR^n$ which is locally $L^1$. Let $g$ be a subanalytic function on $U$ which is nowhere vanishing and such that $gu$ is also locally $L^1$. Then $u$ is holonomic if and only if $g u$ is holonomic.
\end{prop}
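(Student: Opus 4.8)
The plan is to reduce the two-sided equivalence to a single implication and then to use the resolution Theorem~\ref{thm:res:def} to turn $g$ into a monomial times an analytic unit, at which point Propositions~\ref{prop:mult} and~\ref{prop:product:unit} take over. First I would observe that it suffices to prove the one-sided statement: if $w$ is a holonomic locally $L^1$ distribution, $h$ is subanalytic and nowhere vanishing, and $hw$ is locally $L^1$, then $hw$ is holonomic. Indeed, applying this to $(h,w)=(g,u)$ gives ``$u$ holonomic $\Rightarrow gu$ holonomic'', while applying it to $(h,w)=(1/g,gu)$ --- legitimate since $1/g$ is again subanalytic and nowhere vanishing and $(1/g)(gu)=u$ is locally $L^1$ --- gives the reverse implication.

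Since holonomicity is a local (pointwise) condition by Definition~\ref{def:holonomic} and is inherited by restriction to open subsets, I would verify holonomicity of $hw$ near each point $x_0\in U$. On the dense open subanalytic set $U'\subset U$ where $h$ is analytic, $h$ is automatically an analytic unit (being nowhere zero), so near every $x_0\in U'$ the equivalence is immediate from Proposition~\ref{prop:product:unit}; in particular $hw$ is holonomic there. The only points requiring work are those of the lower-dimensional subanalytic set $Z=U\smallsetminus U'$.

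Near such a bad point I would pass to a closed box neighbourhood $X$, which after an affine change of coordinates (harmless by Proposition~\ref{prop:an}) may be taken to be $[0,1]^n$, and apply Theorem~\ref{thm:res:def} with $m=n$ to a bounded subanalytic encoding of $h$ (for instance the pair $|h|/(1+|h|)$ and $1/(1+|h|)$, whose quotient recovers $|h|$). This produces finitely many proper analytic maps $\phi_i\colon[0,1]^n\to X$ restricting to analytic isomorphisms $(0,1)^n\to U_i$ with $\bigcup_iU_i$ open dense in $X$, together with factorisations $\Jac\phi_i=u_iM_i$ and $h\circ\phi_i=u_i'M_i'$ into analytic units times monomials (the latter with possibly negative integer exponents). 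Since $\bigcup_iU_i$ has full measure and $hw$ is locally $L^1$, one has the distributional identity $hw=\sum_i(hw)\mathbf{1}_{U_i}$, and the change of variables formula identifies $(hw)\mathbf{1}_{U_i}$ with the pushforward $(\phi_i)_*\tilde w_i$, where $\tilde w_i=\bigl((hw)\circ\phi_i\bigr)\abs{\Jac\phi_i}=(h\circ\phi_i)\,v_i$ is an $L^1$, compactly supported density and $v_i=(w\circ\phi_i)\abs{\Jac\phi_i}$. By Theorem~\ref{thm:push} it then suffices to prove each $\tilde w_i$ holonomic on $\RR^n$. On $(0,1)^n$ the restriction of $\phi_i$ is a proper bi-analytic isomorphism onto $U_i$, so $v_i=(\phi_i^{-1})_*(w|_{U_i})$ is holonomic there by Proposition~\ref{prop:an} (using that $w|_{U_i}$, being a restriction of the holonomic $w$, is holonomic); multiplying by the monomial $M_i'=\prod_j y_j^{\beta_j}$ coordinate by coordinate through both directions of Proposition~\ref{prop:mult} (all intermediate functions staying locally $L^1$) and by the analytic unit $u_i'$ via Proposition~\ref{prop:product:unit} shows that $\tilde w_i$ is holonomic on the open cube $(0,1)^n$.

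The main obstacle, and the genuinely delicate point, is to upgrade this to holonomicity of $\tilde w_i$ across the boundary of the cube, i.e.\ to control the extension by zero of $\tilde w_i$ to all of $\RR^n$. Writing $\Jac\phi_i=u_i\prod_jy_j^{\alpha_j}$, the map $\phi_i$ is a local analytic isomorphism wherever this Jacobian is nonzero, so $\tilde w_i$ extends holonomically across every face $\{y_j=1\}$ and every face $\{y_j=0,\ \alpha_j=0\}$, and the corresponding reductions of the box are then obtained by the half-space multiplication result Proposition~\ref{prop:mult:half}. The remaining faces are the exceptional coordinate hyperplanes $\{y_j=0\}$ with $\alpha_j>0$, along which $\phi_i$ degenerates; there, propagation of holonomicity across the hyperplane is precisely what the meromorphic-continuation (complex power) arguments behind Propositions~\ref{prop:mult} and~\ref{prop:mult:half} supply in the model situation, once the monomial in the $y_j$-direction is decomposed into a power map (to which Proposition~\ref{prop:power-map:holon} applies) followed by a local isomorphism. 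Carrying out this crossing analysis at each exceptional face, and finally cutting down to $[0,1]^n$ by the $2n$ half-space cut-offs, yields holonomicity of $\tilde w_i$; summing over $i$ via Proposition~\ref{prop:sum:holon} then gives holonomicity of $hw$ near $x_0$, completing the argument.
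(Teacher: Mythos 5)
Your overall route is the same as the paper's: localize, apply Theorem \ref{thm:res:def} to monomialize $g$, transfer everything to $[0,1]^n$ by the change of variables formula and Theorem \ref{thm:push}, and finish with Propositions \ref{prop:product:unit}, \ref{prop:mult} and \ref{prop:mult:half}. Your preliminary reduction to a single implication via $1/g$ is a clean way to organize what the paper leaves implicit, and you are right that the substance of the argument sits at the boundary of the cube.

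The step that does not hold up is your treatment of the exceptional faces $\{y_j=0\}$ with $\alpha_j>0$. You assert that near such a face $\phi_i$ decomposes into a power map followed by a local isomorphism; this is false in general. A proper analytic map whose Jacobian is a unit times $y_1^{\alpha_1}$ need not factor this way (already $(y_1,y_2)\mapsto(y_1\cos y_2,\,y_1\sin y_2)$, a perfectly admissible chart for Theorem \ref{thm:res:def}, is a blow-down and not of that form). Moreover, Proposition \ref{prop:power-map:holon} only constructs a preimage $v$ with $f_*(v)=u$; the direction your crossing argument needs, namely that $v$ is holonomic when $u$ is, is established nowhere in the paper, and the meromorphic-continuation arguments behind Propositions \ref{prop:mult} and \ref{prop:mult:half} presuppose a distribution that is already holonomic on a full neighborhood of the face, which is exactly what is in question there. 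The paper sidesteps all of this: it normalizes so that $g^{\delta}$ takes values in $(0,1]$ for $\delta=\pm1$, so that by Definition \ref{def:unit:mon} and the remark following Theorem \ref{thm:res:def} the monomial in $g^{\delta}\circ\phi_i$ has nonnegative exponents; it then replaces $u$ once and for all by the pulled-back compactly supported $L^1$ density on $[0,1]^n$, after which the equivalence to be proved is between $w$ and $(\mathrm{unit}\cdot M)^{\pm1}w$ for a globally defined unit and a one-signed monomial, handled by the two-sided Propositions \ref{prop:product:unit} and \ref{prop:mult} together with \ref{prop:mult:half}, with every intermediate product automatically sandwiched between $L^1$ functions. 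Your encoding of $h$ through $|h|/(1+|h|)$ and $1/(1+|h|)$ produces a monomial with mixed-sign exponents, which is what forces you into the intermediate-integrability bookkeeping and the face-by-face propagation in the first place.
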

\begin{proof}
As this is a local question, we may focus on an open subset $U'$ of $U$, by working with a covering of $U$ by open subsets. 
Up to translating and shrinking, we may suppose that $U'$ is an open subanalytic subset of $(0,1)^n$ such that $U$ contains the closure $\overline U'$ of $U'$ in $\R^n$, and that $g^\delta$ takes values in $(0,1]$ for $\delta$ being either $1$ or $-1$. Apply Theorem \ref{thm:res:def} to the function $f$ which is the restriction of $g^\delta$ to the set $X:=\overline U'$, yielding maps $\phi_i$ and open sets $U_i$. Hence, up to writing $u$ as the finite sum of the restrictions $u_{|\overline U_i}$ and working with each restriction separately (which is allowed by Proposition \ref{prop:sum:holon}), it is enough to prove the result for each $u_{|\overline U_i}$.
Hence,
by the change of variables formula (used as in the proof of Proposition \ref{prop:power-map:holon} but now with transformation $\phi_i$)
and Theorem \ref{thm:push}, we may suppose that $U_i=(0,1)^n$, and that $g^\delta_{|[0,1]^n}$ equals an analytic unit on $[0,1]^n$ times a monomial. But then the corollary follows from Propositions \ref{prop:product:unit}, \ref{prop:mult} and \ref{prop:mult:half}.
\end{proof}

\section{Holonomicity of $\cCexp$-class distributions}\label{sec:holon}

In this section we first prove Theorem \ref{mainthm} under the additional assumption that $u$ is a locally integrable function.
The general theorem will then follow from this case by our results on anti-derivatives to reduce to the locally integrable case.



Consider a distribution $u$ on an open \blue{subanalytic} set $U\subset \RR^n$. Suppose that $u$ is given by a locally $L^1$ function $f$ on $U$, and that $f$ is of $\Cexp$-class.
We first show that such $u$ is holonomic.

\begin{proof}[Proof of Theorem \ref{mainthm} for locally integrable distributions]
Since holonomicity is a local property on $U$, we may suppose that the support of $f$ is included in $[0,1]^n$ and that $[0,1]^n\subset U$, with notation from just above the proof.
By Lemma \ref{module}, 
we can write $f$ on $U$ as
$$
f(x) = \int_{t\in\RR} H (x,t) \exp(\mathrm{i}t) \mathrm{d}t
$$
where $H$ lies in  $\cC(U\times \RR)$, 
and, for each $x$ in $U$, the function $t\mapsto H(x,t)$ is  $L^1$ on $\RR$. The main difficulty for this proof  is the fact that, in general, $H$ is not locally $L^1$ as a function on $U\times \RR$, hence, by itself, it does not automatically give a distribution on $U\times \RR$.
However, by Propositions \ref{prop:Cexp:int} and \ref{lem:u:int}, we may suppose that $H$ is  $L^1$ on $U\times \R$ (namely, up to replacing $H$ by $Hg$ with $g$ from the final part of Proposition \ref{prop:Cexp:int}, which is harmless by Proposition \ref{lem:u:int}).
Let $\cG_0$ be the collection of \blue{$(x,t)\mapsto t$ and} subanalytic functions appearing in the build-up of $H$ (that is, we write $H$ as a finite sum of products of some of the subanalytic functions in $\cG_0$ and of logarithms of some of the positive subanalytic functions in $\cG_0$, which is possible by the definition of $\cC$-class functions). Up to slightly rewriting and \change{finite partitioning}, we may suppose that each $g$ in $\cG_0$ has constant sign, and either takes values outside $[-1,1]$, or, inside $[-1,1]$. Let $\cG$ be the corresponding collection of functions $\pm g^{\pm 1}$ for $g$ in $\cG_0$ such that all functions in $\cG$ take values in $[0,1]$.
Now apply the resolution result of Theorem \ref{thm:res:def} (in the $(x,t)$-space) to the map whose component functions are the functions in $\cG$ and with the function $(x,t)\mapsto \delta_1 t^{\delta_2}$ in the role of the component function $f_1$ in item (\ref{thm:res:def:3}) of Theorem \ref{thm:res:def} \change{and where the $\delta_i$ are $1$ or $-1$ so that $(x,t)\mapsto \delta_1 t^{\delta_2}$ appears in $\cG$}. Up to a change of variables (as in the proofs of Propositions \ref{prop:power-map:holon} and \ref{lem:u:int}, with transformation \change{one of the maps $\phi_i$ from Theorem \ref{thm:res:def}}), we are now in the base case of Proposition \ref{cor:base-case-u} possibly multiplied with an analytic unit, \change{or more precisely, a finite sum of those situations}. (Here, we have used the final statement of (\ref{thm:res:def:3}) of Theorem \ref{thm:res:def} for \change{$\delta_1 t^{\delta_2}$}, in the role of $f_1$.) \change{Let us explain into more detail how the various
logarithms of monomialized subanalytic functions (as in Theorem \ref{thm:res:def}) need to be expanded, after we have rewritten by the change of variables formula. The main idea is that if we have a factor of the form $\log ( x^\alpha u(x) )$ for $x$ in $(0,1)^{n+1}$ and for some $\alpha\in\ZZ^{n+1}$ and positively-valued analytic unit $u$ on $[0,1]^{n+1}$, we can choose a sufficiently large constant $c>1$ so that $\log (cu(x))$ is an analytic unit on $[0,1]^{n+1}$, and then we can expand as follows:
$$
\log ( x^\alpha u(x) ) = \log (c^{-1}) + \sum_{i=1}^{n+1} \alpha_i \log  (x_i) + \log (cu(x)).
$$
Also after our use of the change of variables formula, the oscillatory factor now may have one of the following three forms, where again $x$ runs over $(0,1)^{n+1}$
\begin{itemize}
\item $e^{\pm \mathrm{  i}}$,

\item $e^{\mathrm{i} x^\alpha}$ for some $\alpha \in \NN^{n+1}$,

\item $e^{\mathrm{i} x^{-\alpha}}$ for some $\alpha \in \NN^{n+1}$.
\end{itemize}
In the first two cases this factor is a complex-valued analytic unit on $[0,1]^{n+1}$ (that is, the real part and the imaginary part are analytic units), and in the third case it is of the same form as the oscillatory factor in Proposition \ref{cor:base-case-u}.
By propositions \ref{cor:base-case-u} and \ref{prop:product:unit} and by Theorem \ref{prop:push:holon} it follows that, for each choice of signs $\delta_i=\pm 1$, the distribution $u_\delta$ on $U\times \RR$ given by the locally integrable function $G(x,t)$ sending $(x,t)$ to $J(t)  H (x,t) \exp(\delta_1\mathrm{i}t^{\delta_2})$ when $t$ lies in $(0,1)$ and to $0$ otherwise is holonomic, where $J(t)$ is the Jacobian of the map $t\mapsto \delta_1t^{\delta_2}$.
Theorem \ref{prop:push:holon} now finishes the proof of the special case of Theorem \ref{mainthm} for locally integrable distributions of $\cCexp$-class, where we apply it to the projection map $U\times \RR\to U$ which is proper on the support of $u_\delta$. Indeed, our given distribution $u$ is the finite sum of the push-forwards of the $u_\delta$, so we are done.}
\end{proof}



We can now prove Theorem \ref{mainthm} in the general case, by reducing to the locally integrable case using the results from Section \ref{sec:antider} on anti-derivatives.

\begin{proof}[Proof of Theorem \ref{mainthm}, general case]
	By using \blue{anti-derivatives} as in Proposition \ref{prop:anty.make.cont} and by Propositions \ref{prop:Cexp.and.cont.is.CexpFun}, \ref{prop:anti} and \ref{prop:anti-Cexp}, we directly reduce to the already treated case of locally integrable distributions.
To use Proposition \ref{prop:anty.make.cont} we need a distribution with compact support. As holonomicity is local property, for any $x_0 \in U$, we can consider a smooth $\cCexp$-class function $\theta$ with compact support and equal to 1 on a open neighborhood of $x_0$, and work with $\theta u$ instead of $u$.
\end{proof}



%
%
%


\subsection{Generalizations}\label{sec:gen}
All the results of the present paper extend to the more general framework of  ${\mathcal {C}}^{\CC,\cF}$-class functions (see \cite[Definition 2.7]{CCS}). In Lemma \ref{prop:h1h2} we showed that the class $\cCexp$ can be built from the class of constructible functions by applying an integral transform with kernel $t\mapsto \exp(\text{i}t)$. Similarly, the class ${\mathcal {C}}^{\CC,\cF}$ is constructed by applying this same integral transform to functions which are not just constructible, but power-constructible (the latter class  ${\mathcal {C}}^\CC$, defined in \cite[Definition 2.2]{CCRS}, is the collection of $\mathbb{C}$-algebras of functions defined on subanalytic sets, generated by complex powers and logarithms of subanalytic functions). In order to extend the results of this work to the class ${\mathcal {C}}^{\CC,\cF}$, we replace Theorem \ref{thm:stab} by \cite[Theorem 2.9]{CCS}, Proposition \ref{prop:limits} by \cite[Theorem 7.11]{CCS} and we modify the proof of Lemma \ref{lem:H:int} in the following way: the subanalytic function $f_i$ are now raised to some complex power $\alpha_i$, whose real part we may suppose to be nonnegative; the factor $(1+|f_i|)$ appearing in the proof should now be replaced by $(1+|f_i|^N)$, where $N$ is some natural number bigger than the real parts of all the exponents $\alpha_i$. All the proofs of this paper go through unchanged, up to replacing every occurrence of the word “constructible” by “power-constructible” (note in particular that the Base case, treated in Lemma \ref{prop:base-case} and Proposition \ref{cor:base-case-u}, already takes into account complex exponents $a_j$).


\subsection{Open questions}
Let us end with a few open questions on $\cCexp$-class distributions. It would be interesting to know whether or not $\cCexp$-class distributions are automatically tempered, and whether or not they have regularisations of $\cCexp$-class. Furthermore, it would be interesting to know more about wave front sets of $\cCexp$-class distributions, for example, whether or not they are always equal to the complement of the zero locus of a $\cCexp$-class function, in analogy to the $p$-adic situation from Theorem 3.4.1 of \cite{CHLR}. Finally, the condition that the support of $u$ is compact in Theorem \ref{thm:derivative} and Corollary \ref{thm:gen.sm} is possibly not necessary.

\bibliographystyle{amsplain}
\bibliography{anbib}
\end{document}